\numberwithin{equation}{section}
\theoremstyle{plain}
\newtheorem{theorem}[equation]{Theorem}
\newtheorem{corollary}[equation]{Corollary}
\newtheorem{lemma}[equation]{Lemma}
\newtheorem{proposition}[equation]{Proposition}
\theoremstyle{definition}
\newtheorem{definition}[equation]{Definition}
\newtheorem{example}[equation]{Example}
\newtheorem{remark}[equation]{Remark}
\numberwithin{equation}{section}
\newcommand{\R}{{\mathbb R}}
\newcommand{\N}{{\mathbb N}}
\newcommand{\Om}{\Omega}
\providecommand{\vint}[1]{\mathchoice
          {\mathop{\vrule width 5pt height 3 pt depth -2.5pt
                  \kern -9pt \kern 1pt\intop}\nolimits_{\kern -5pt{#1}}}
          {\mathop{\vrule width 5pt height 3 pt depth -2.6pt
                  \kern -6pt \intop}\nolimits_{\kern -3pt{#1}}}
          {\mathop{\vrule width 5pt height 3 pt depth -2.6pt
                  \kern -6pt \intop}\nolimits_{\kern -3pt{#1}}}
          {\mathop{\vrule width 5pt height 3 pt depth -2.6pt
                  \kern -6pt \intop}\nolimits_{\kern -3pt{#1}}}}
\newcommand{\eps}{\varepsilon}
\newcommand{\loc}{\mathrm{loc}}
\newcommand{\BV}{\mathrm{BV}}
\newcommand{\liploc}{\mathrm{Lip}_{\mathrm{loc}}}
\newcommand{\ch}{\text{\raise 1.3pt \hbox{$\chi$}\kern-0.2pt}}
\DeclareMathOperator{\capa}{Cap}
\DeclareMathOperator{\rcapa}{cap}
\DeclareMathOperator{\diam}{diam}
\DeclareMathOperator{\Lip}{Lip}
\DeclareMathOperator*{\esssup}{ess\,sup}
\DeclareMathOperator*{\esslimsup}{ess\,lim\,sup}
\DeclareMathOperator{\supp}{spt}
\begin{document}
\title{Superminimizers and a weak Cartan property for $p=1$ in metric spaces
\footnote{{\bf 2010 Mathematics Subject Classification}: 30L99, 31E05, 26B30.
\hfill \break {\it Keywords\,}: metric measure space, bounded variation,
superminimizer, fine topology, semicontinuity,
weak Cartan property
}}
\author{Panu Lahti}
\maketitle

\begin{abstract}
We study functions of least gradient as well as related superminimizers and solutions
of obstacle problems in metric spaces that are equipped with a doubling measure
and support a Poincar\'e inequality. We show a standard weak Harnack inequality and
use it
to prove semicontinuity properties of such functions.
We also study some properties of the \emph{fine topology} in the case $p=1$.
Then we combine these theories to prove a \emph{weak Cartan property} of superminimizers
in the case $p=1$, as well as a strong version at points of nonzero capacity.
Finally we employ the weak Cartan property to show that any topology that makes
the upper representative $u^{\vee}$ of every $1$-superminimizer $u$
upper semicontinuous in open sets is stronger (in some cases, strictly) than the
$1$-fine topology.

\end{abstract}

\section{Introduction}

It is well known that solutions $u$ of the $p$-Laplace equation, for
$1<p<\infty$, can be characterized as local minimizers of the $L^p$-norm of $|\nabla u|$.
This formulation has the advantage that it can be generalized to
a metric measure space, by replacing $|\nabla u|$ with
the minimal $p$-weak upper gradient $g_u$; see Section \ref{preliminaries}
for definitions and notation.
The study of such $p$-minimizers is a starting point for nonlinear potential theory,
which is now well developed even in metric spaces
that are equipped with a doubling measure
and support a Poincar\'e inequality, see especially the monograph
\cite{BB} and e.g. \cite{BB-OD,BBS2,BBS3,S2}, and also the monographs
\cite{MZ} and \cite{HKM} for the Euclidean theory and its history in the nonweighted and weighted
setting, respectively.

In the case $p=1$, instead of the $p$-energy it is natural to minimize the total variation 
among functions of bounded variation ($\BV$ functions),
and the resulting minimizers are called functions of least gradient, see e.g. \cite{BDG,MRL,MST,SWZ}
for previous works in the Euclidean setting, and \cite{HKLS,KLLS} in the metric setting.
More precisely, a function $u\in\BV_{\loc}(\Om)$ is a function of least gradient
in an open set $\Om\subset X$ if for every
$\varphi\in \BV_c(\Om)$, we have
\[
\Vert Du\Vert(\supp \varphi)\le \Vert D(u+\varphi)\Vert(\supp \varphi)
\]
Testing only with nonnegative $\varphi$ leads to the notion of \emph{1-superminimizers},
whose study is the main objective of this paper.
In the case $p>1$, much of potential theory deals with superminimizers
and the closely related concept of
superharmonic functions, which were introduced in the metric setting in \cite{KiMa}.
A notion of $1$-superharmonic functions has been studied in the Euclidean setting in \cite{SS1},
but especially in the metric setting very little is known 
about these concepts in the case $p=1$.

For consistency, we use the term \emph{1-minimizer} instead of function of least gradient.
In \cite[Theorem 4.1]{HKLS},
$1$-minimizers were shown to be continuous outside their jump sets.
In this paper we show that this is a consequence of the fact
that for super- and subminimizers $u$, the pointwise representatives $u^{\wedge}$ and $u^{\vee}$
are lower and upper semicontinuous, respectively, at \emph{every} point.
This is Theorem \ref{thm:superminimizers are lsc}.

We also study some basic properties of solutions of \emph{obstacle problems}
in the case $p=1$;
such solutions are, in particular, $1$-superminimizers.
In the Euclidean setting, obstacle problems for the $\BV$ class
have been studied in e.g. \cite{GCP,SS2,ZiZu}, and in the metric setting in \cite{KKST-DGM}.
In this paper we first prove standard De Giorgi-type and weak Harnack inequalities
for $1$-subminimizers and certain solutions of obstacle problems,
following especially \cite{BB,HKL,KKLS}, and then use these to show
the aforementioned semicontinuity property of $1$-superminimizers as well as
a similar property for solutions of obstacle problems at points where the obstacle is continuous,
see Theorem \ref{thm:semicontinuity of obstacle problems}.

While these results are of some independent interest,
our main goal is to consider certain questions of \emph{fine potential theory} when $p=1$.
In the case $p>1$ it is known that the so-called \emph{$p$-fine topology} is the coarsest
topology that makes all $p$-superharmonic functions continuous
in open sets
(alternatively upper semicontinuous, as superharmonic functions are lower semicontinuous
already with respect to the metric topology).
In Section \ref{sec:the fine topology} we define the
notion of \emph{thinness} and the resulting
fine topology in the case $p=1$,
following \cite{L}, and generalize some properties concerning, in particular,
points of nonzero capacity from the case $p>1$ to the case $p=1$.
Then in Section \ref{sec:weak cartan property} we prove the main result of this paper,
namely the following
\emph{weak Cartan property} for $1$-superminimizers.
\begin{theorem}\label{thm:weak Cartan property}
	Let $A\subset X$ and let $x\in X\setminus A$ such that $A$
	is $1$-thin at $x$.
	Then there exist $R>0$ and $u_1,u_2\in\BV(X)$ that are $1$-superminimizers in $B(x,R)$
	such that $\max\{u_1^{\wedge},u_2^{\wedge}\}=1$ in $A\cap B(x,R)$ and
	$u_1^{\vee}(x)=0=u_2^{\vee}(x)$.
\end{theorem}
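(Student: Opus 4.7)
The plan is to construct $u_1$ and $u_2$ as sums of $\BV$ obstacle-problem solutions supported on dyadic annular shells around $x$, partitioned into two groups (say even- and odd-indexed shells) so that within each group the supports are pairwise disjoint and the sum inherits the $1$-superminimizer property. Fix a small $R>0$ and a sufficiently large integer $\lambda\ge 4$, and set $r_i=\lambda^{-i}R$ and $A_i=A\cap (\overline{B}(x,r_i)\setminus B(x,r_{i+1}))$. By the definition of $1$-thinness developed in Section \ref{sec:the fine topology}, the assumption that $A$ is $1$-thin at $x$ supplies a convergent Wiener-type series $\sum_i c_i<\infty$, where $c_i$ denotes the variational $1$-capacity of $A_i$ relative to a slightly thickened annulus around it.

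For each $i$ I would solve an obstacle problem on an open annular neighborhood $U_i\supset A_i$, chosen so that $x\notin\overline{U_i}$ and so that the $U_i$ for even indices are pairwise disjoint and likewise for odd indices (this is what dictates the size of $\lambda$). The obstacle is $\chi_{A_i}$ with zero boundary values, producing a nonnegative $v_i\in\BV(X)$ with $v_i\le 1$, $\supp v_i\subset \overline{U_i}$, and (by the obstacle-problem theory referenced in connection with Theorem \ref{thm:semicontinuity of obstacle problems}) $v_i^{\wedge}\ge 1$ quasi-everywhere on $A_i$, together with a De~Giorgi-type energy bound $\Vert Dv_i\Vert(X)\le Cc_i$. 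Setting
\[
u_1:=\sum_{i\text{ even}} v_i,\qquad u_2:=\sum_{i\text{ odd}} v_i,
\]
the disjointness of supports within each group yields $\Vert Du_k\Vert(X)=\sum \Vert Dv_i\Vert(X)\le C\sum c_i<\infty$, so $u_k\in\BV(X)$; and within any relatively compact subset of $B(x,R)\setminus\{x\}$ only finitely many $v_i$ are nonzero and they have disjoint open neighborhoods in which each is a $1$-superminimizer, so $u_k$ is a $1$-superminimizer in $B(x,R)\setminus\{x\}$. Extending this across the single point $x$ is harmless since test functions in $\BV_c(B(x,R))$ cannot detect a $\mu$-null point.

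For the pointwise conclusions, any $y\in A\cap B(x,R)$ belongs to some $A_i$, so $v_i^{\wedge}(y)\ge 1$, and hence $u_k^{\wedge}(y)\ge 1$ for the matching parity; combined with the uniform bound $u_k\le 1$ this gives $\max\{u_1^{\wedge},u_2^{\wedge}\}=1$ on $A\cap B(x,R)$. To obtain $u_k^{\vee}(x)=0$ I would estimate the averages $\vint_{B(x,r)} u_k\,d\mu$ by splitting the sum at the index $i(r)$ for which $r_{i(r)+1}\le r<r_{i(r)}$: the tail contribution is controlled by the convergent tail $\sum_{i\ge i(r)} c_i$, while the remaining summands have supports outside $B(x,r_{i(r)+1})$, so only a controlled number of them intersect $B(x,r)$, and their contribution is bounded using the Poincar\'e inequality and the $c_i$. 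The resulting $L^1$-average bound is then upgraded to the pointwise bound on the upper representative $u_k^{\vee}(x)$ via a standard $\BV$ capacitary estimate at $x$.

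The main obstacle is precisely this last step: converting the Wiener-type capacity sum into the pointwise vanishing of $u_k^{\vee}$ at $x$. For $p>1$ one handles the analogous step through Riesz-potential bounds for capacitary potentials, but at $p=1$ the right substitute must go through the coarea formula, decomposing $u_k$ into its superlevel sets $\{u_k>t\}$ (sets of finite perimeter whose capacities one can control directly), and combining this with the semicontinuity result of Theorem \ref{thm:superminimizers are lsc} applied to $1-u_k$. A secondary subtlety, already flagged above, is verifying carefully that the infinite sum is a $1$-superminimizer on the whole of $B(x,R)$ and not just on $B(x,R)\setminus\{x\}$.
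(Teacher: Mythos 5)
Your proposal shares the same high-level scheme as the paper — decompose a neighborhood of $x$ into dyadic annuli, split them into two interleaved families so that each family has gaps, and produce two $1$-superminimizers from obstacle problems over these families — but there are several genuine gaps, the first of which is a conceptual error about the hypothesis itself. You assert that $1$-thinness ``supplies a convergent Wiener-type series $\sum_i c_i<\infty$.'' That does not follow from Definition~\ref{def:1 fine topology}: $1$-thinness in this paper is the \emph{limit} condition
\[
\lim_{r\to 0}r\,\frac{\rcapa_1(A\cap B(x,r),B(x,2r))}{\mu(B(x,r))}=0,
\]
which only forces the normalized capacities to tend to $0$, not to be summable (think of $c_i$ whose normalized versions behave like $1/i$). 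Since your proof that $u_k\in\BV(X)$ runs entirely through $\sum\Vert Dv_i\Vert(X)\le C\sum c_i<\infty$, this step collapses without further work. Two additional steps are hand-waved and nontrivial: (i) that the sum of shell solutions is a $1$-superminimizer in $B(x,R)$ rather than merely in $B(x,R)\setminus\{x\}$ — a $\BV_c(B(x,R))$ test function can have support containing $x$, and removability across $\{x\}$ in the $\BV$/perimeter framework is not a one-liner (and a gluing lemma for superminimizers over a cover is also needed but not stated for $p=1$ in this paper); and (ii) you only get $v_i^{\wedge}\ge 1$ quasi-everywhere on $A_i$, whereas the theorem demands $\max\{u_1^{\wedge},u_2^{\wedge}\}=1$ \emph{everywhere} on $A\cap B(x,R)$. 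The paper fixes (ii) by first replacing $A$ with an open $1$-thin superset $W\supset A$ (Lemma~\ref{lem:open modification of thin sets}), so that the open set $W$ sits inside the solution set and forces the lower representative to equal $1$ there.

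The paper's actual argument sidesteps all of these issues by solving a \emph{single} obstacle problem $\mathcal K_{W_i,0}(\tfrac32 B_i)$ on the whole union of even (resp.\ odd) annuli, using Proposition~\ref{prop:set solution} to take the solution to be a characteristic function $\ch_{E_i}$. This is automatically a $1$-superminimizer on $B(x,R)$ — no summation, no removability. Then the weak Harnack inequality in the form of Lemma~\ref{lem:smallness in annuli} shows $\ch_{E_i}^{\vee}=0$ on the intermediate annuli $F_{i+1}$, so $E_i$ is disconnected across them; a perimeter-comparison argument then shows $E_0\cap\tfrac54 B_i$ solves the scale-$i$ obstacle problem for every even $i$, giving a scalewise perimeter and measure bound controlled by the small quantity in the thinness condition. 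Since $\ch_{E_0}^{\vee}$ takes only the values $0$ and $1$, the pointwise conclusion $\ch_{E_0}^{\vee}(x)=0$ is exactly the statement that $E_0$ has density $0$ at $x$, which drops out of the measure estimate \eqref{eq:estimate for measure of E0}; you instead need a Chebyshev-type argument on averages, which would also work but which you did not carry out. If you want to rescue your additive scheme, you would at minimum need to pass to a sparse subsequence of scales along which the normalized capacities are summable, invoke reverse doubling to control $\sum_{i\ge j}\mu(B_i)/\mu(B_j)$, replace $A$ by an open thin superset before solving each shell problem, and then prove a removability/gluing lemma — at which point the single-obstacle-problem route is both shorter and cleaner.
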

The analogous property in the case $p>1$ is well known and proved in the metric setting in
\cite{BBL-WC}.
We also prove a strong version of the property,
requiring only one superminimizer, at points of nonzero capacity; this is Proposition
\ref{prop:strong Cartan property}.
Then as in the case $p>1$ we use the weak Cartan property to show that any topology that
makes the upper approximate limit $u^{\vee}$ of every $1$-superminimizer $u$
upper semicontinuous
in every open set
$\Om$ necessarily contains the $1$-fine topology. This is
Theorem \ref{thm:superminimizer top contains fine top}.
However, we observe that unlike in the case $p>1$, the converse does not hold, that is,
the $1$-fine topology does not always make $u^{\vee}$ upper semicontinuous
for $1$-superminimizers $u$;
see Example \ref{ex:counterexample}.

Our main results seem to be new even in Euclidean spaces. A key motivation
for the work is that
a weak Cartan property will be useful in considering further questions such as \emph{p-strict subsets},
fine connectedness, and the relationship between finely open and \emph{quasiopen} sets for $p=1$,
see for example \cite{BBL-CCK,BBL-SS,Lat} for the case $p>1$.

\section{Preliminaries}\label{preliminaries}

In this section we introduce most of the notation, definitions,
and assumptions employed in the paper.

Throughout this paper, $(X,d,\mu)$ is a complete metric space that is equip\-ped
with a metric $d$ and a Borel regular outer measure $\mu$ that satisfies
a doubling property.
The doubling property means that
there is a constant $C_d\ge 1$ such that
\[
0<\mu(B(x,2r))\le C_d\mu(B(x,r))<\infty
\]
for every ball $B(x,r):=\{y\in X:\,d(y,x)<r\}$ with center $x\in X$ and radius $r>0$.
Sometimes we abbreviate $B:=B(x,r)$ and $aB:=B(x,ar)$ with $a>0$; note that in metric spaces,
a ball does not necessarily have a unique center point and radius, but we will
always consider balls for which these have been specified.
We also assume that $X$ supports a $(1,1)$-Poincar\'e inequality
that will be defined below, and that $X$ consists of at least $2$ points.
By iterating the doubling condition, we obtain
for any $x\in X$ and any $y\in B(x,R)$ with $0<r\le R<\infty$ that
\begin{equation}\label{eq:homogenous dimension}
\frac{\mu(B(y,r))}{\mu(B(x,R))}\ge \frac{1}{C_d^2}\left(\frac{r}{R}\right)^{Q},
\end{equation}
where $Q>1$ only depends on the doubling constant $C_d$.
When we want to state that a constant $C$
depends on the parameters $a,b, \ldots$, we write $C=C(a,b,\ldots)$, and
we understand all constants to be strictly positive.
When a property holds outside a set of $\mu$-measure zero, we say that it holds
almost everywhere, abbreviated a.e.

A complete metric space equipped with a doubling measure is proper,
that is, closed and bounded sets are compact.
Since $X$ is proper, for any open set $\Omega\subset X$
we define $\liploc(\Omega)$ to be the space of
functions that are Lipschitz in every open $\Omega'\Subset\Omega$.
Here $\Omega'\Subset\Omega$ means that $\overline{\Omega'}$ is a
compact subset of $\Omega$. Other local spaces of functions are defined analogously.

For any set $A\subset X$ and $0<R<\infty$, the restricted spherical Hausdorff content
of codimension one is defined to be
\[
\mathcal{H}_{R}(A):=\inf\left\{ \sum_{i=1}^{\infty}
  \frac{\mu(B(x_{i},r_{i}))}{r_{i}}:\,A\subset\bigcup_{i=1}^{\infty}B(x_{i},r_{i}),\,r_{i}\le R\right\}.
\]
The codimension one Hausdorff measure of $A\subset X$ is then defined to be
\[
\mathcal{H}(A):=\lim_{R\rightarrow 0}\mathcal{H}_{R}(A).
\]

The measure theoretic boundary $\partial^{*}E$ of a set $E\subset X$ is the set of points $x\in X$
at which both $E$ and its complement have strictly positive upper density, i.e.
\[
\limsup_{r\to 0}\frac{\mu(B(x,r)\cap E)}{\mu(B(x,r))}>0\quad
  \textrm{and}\quad\limsup_{r\to 0}\frac{\mu(B(x,r)\setminus E)}{\mu(B(x,r))}>0.
\]
The measure theoretic interior and exterior of $E$ are defined respectively by
\begin{equation}\label{eq:definition of measure theoretic interior}
I_E:=\left\{x\in X:\,\lim_{r\to 0}\frac{\mu(B(x,r)\setminus E)}{\mu(B(x,r))}=0\right\}
\end{equation}
and
\begin{equation}\label{eq:definition of measure theoretic exterior}
O_E:=\left\{x\in X:\,\lim_{r\to 0}\frac{\mu(B(x,r)\cap E)}{\mu(B(x,r))}=0\right\}.
\end{equation}
Note that the space is always partitioned into the disjoint sets
$\partial^*E$, $I_E$, and $O_E$.
By Lebesgue's differentiation theorem (see e.g. \cite[Chapter 1]{Hei}),
for a $\mu$-measurable set $E$ we have $\mu(E\Delta I_E)=0$,
where $\Delta$ is the symmetric difference.

All functions defined on $X$ or its subsets will take values
in $\overline{\R}:=[-\infty,\infty]$.
By a curve we mean a rectifiable continuous mapping from a compact interval of the real line
into $X$.
A nonnegative Borel function $g$ on $X$ is an upper gradient 
of a function $u$ on $X$ if for all nonconstant curves $\gamma$, we have
\begin{equation}\label{eq:definition of upper gradient}
|u(x)-u(y)|\le \int_\gamma g\,ds,
\end{equation}
where $x$ and $y$ are the end points of $\gamma$
and the curve integral is defined by using an arc-length parametrization,
see \cite[Section 2]{HK} where upper gradients were originally introduced.
We interpret $|u(x)-u(y)|=\infty$ whenever  
at least one of $|u(x)|$, $|u(y)|$ is infinite.

In what follows, let $1\le p<\infty$.
We say that a family of curves $\Gamma$ is of zero $p$-modulus if there is a 
nonnegative Borel function $\rho\in L^p(X)$ such that 
for all curves $\gamma\in\Gamma$, the curve integral $\int_\gamma \rho\,ds$ is infinite.
A property is said to hold for $p$-almost every curve
if it fails only for a curve family with zero $p$-modulus. 
If $g$ is a nonnegative $\mu$-measurable function on $X$
and (\ref{eq:definition of upper gradient}) holds for $p$-almost every curve,
we say that $g$ is a $p$-weak upper gradient of $u$. 
By only considering curves $\gamma$ in $\Om\subset X$,
we can talk about a function $g$ being a ($p$-weak) upper gradient of $u$ in $\Om$.

Given an open set $\Om\subset X$, we define the norm
\[
\Vert u\Vert_{N^{1,p}(\Om)}:=\Vert u\Vert_{L^p(\Om)}+\inf \Vert g\Vert_{L^p(\Om)},
\]
where the infimum is taken over all $p$-weak upper gradients $g$ of $u$ in $\Om$.
The substitute for the Sobolev space $W^{1,p}$ in the metric setting is the Newton-Sobolev space
\[
N^{1,p}(\Om):=\{u:\|u\|_{N^{1,p}(\Om)}<\infty\}.
\]
We understand every Newton-Sobolev function to be defined at every $x\in \Om$
(even though $\Vert \cdot\Vert_{N^{1,p}(\Om)}$ is, precisely speaking, then only a seminorm).
It is known that for any $u\in N_{\loc}^{1,p}(\Om)$, there exists a minimal $p$-weak
upper gradient of $u$ in $\Om$, always denoted by $g_{u}$, satisfying $g_{u}\le g$ 
a.e. in $\Om$, for any $p$-weak upper gradient $g\in L_{\loc}^{p}(\Om)$
of $u$ in $\Om$, see \cite[Theorem 2.25]{BB}.

The $p$-capacity of a set $A\subset X$ is given by
\[
\capa_p(A):=\inf \Vert u\Vert_{N^{1,p}(X)},
\]
where the infimum is taken over all functions $u\in N^{1,p}(X)$ such that $u\ge 1$ in $A$.
We know that $\capa_p$ is an outer capacity, meaning that
\[
\capa_p(A)=\inf\{\capa_p(U):\,U\supset A\textrm{ is open}\}
\]
for any $A\subset X$, see e.g. \cite[Theorem 5.31]{BB}.

If a property holds outside a set
$A\subset X$ with $\capa_p(A)=0$, we say that it holds $p$-quasieverywhere,
abbreviated $p$-q.e.
If $u\in N^{1,p}(\Om)$, then $\Vert u-v\Vert_{N^{1,p}(\Om)}=0$ if and only if $u=v$
$p$-q.e. in $\Om$, see \cite[Proposition 1.61]{BB}.
By \cite[Theorem 4.3, Theorem 5.1]{HaKi} we know that if $A\subset X$,
\begin{equation}\label{eq:null sets of Hausdorff measure and capacity}
\capa_1(A)=0\quad\textrm{if and only if}\quad\mathcal H(A)=0.
\end{equation}

The variational $p$-capacity of a set $A\subset \Om$
with respect to an open set $\Om\subset X$ is given by
\[
\rcapa_p(A,\Om):=\inf \int_X g_u^p \,d\mu,
\]
where the infimum is taken over functions $u\in N^{1,p}(X)$ such that $u\ge 1$ in $A$
(equivalently, $p$-q.e. in $A$) and $u=0$ in $X\setminus \Om$;
recall that $g_u$ is the minimal $p$-weak upper gradient of $u$.
We know that $\rcapa_p$ is also an outer capacity, in the sense that if $\Omega\subset X$ is a bounded open set and $A\Subset \Omega$, then
\[
\rcapa_p(A,\Omega)=\inf\{\rcapa_p(U,\Omega):\,U\textrm{ open},\,A\subset U\subset\Omega \},
\]
see \cite[Theorem 6.19]{BB}.
It is easy to see that in the definitions of capacities, we can assume the test functions
to satisfy $0\le u\le 1$.
For basic properties satisfied by capacities, such as monotonicity and countable subadditivity, see e.g. \cite{BB}.

Next we recall the definition and basic properties of functions
of bounded variation on metric spaces, following \cite{M}.
See also e.g. \cite{AFP, EvaG92, Fed, Giu84, Zie89} for the classical 
theory in the Euclidean setting.
Let $\Om\subset X$ be an open set.
Given a function $u\in L^1_{\loc}(\Om)$, we define the total variation of $u$ in $\Om$ by
\[
\|Du\|(\Om):=\inf\left\{\liminf_{i\to\infty}\int_{\Om} g_{u_i}\,d\mu:\, u_i\in 
\Lip_{\loc}(\Om),\, u_i\to u\textrm{ in } L^1_{\loc}(\Om)\right\},
\]
where each $g_{u_i}$ is again the minimal $1$-weak upper gradient of $u_i$ in $\Om$.
(In \cite{M}, local Lipschitz constants were used instead of upper gradients, but
the properties of the total variation can be proved similarly with either definition.)
We say that a function $u\in L^1(\Om)$ is of bounded variation, 
and denote $u\in\BV(\Om)$, if $\|Du\|(\Om)<\infty$.
For an arbitrary set $A\subset X$, we define
\[
\|Du\|(A):=\inf\{\|Du\|(U):\, A\subset U,\,U\subset X
\text{ is open}\}.
\]
If $u\in L^1_{\loc}(\Om)$ and $\Vert Du\Vert(\Omega)<\infty$, $\|Du\|(\cdot)$ is
a Radon measure on $\Omega$ by \cite[Theorem 3.4]{M}.
A $\mu$-measurable set $E\subset X$ is said to be of finite perimeter if $\|D\ch_E\|(X)<\infty$, where $\ch_E$ is the characteristic function of $E$.
The perimeter of $E$ in $\Omega$ is also denoted by
\[
P(E,\Omega):=\|D\ch_E\|(\Omega).
\]
For any $u,v\in L^1_{\loc}(\Om)$, it is straightforward to show that
\begin{equation}\label{eq:BV functions form vector space}
\Vert D(u+v)\Vert(\Om)\le \Vert Du\Vert(\Om)+\Vert Dv\Vert(\Om).
\end{equation}

We have the following coarea formula from \cite[Proposition 4.2]{M}: if $\Omega\subset X$ is an open set and $u\in L^1_{\loc}(\Omega)$, then
\begin{equation}\label{eq:coarea}
\|Du\|(\Omega)=\int_{-\infty}^{\infty}P(\{u>t\},\Omega)\,dt.
\end{equation}

We will assume throughout the paper that $X$ supports a $(1,1)$-Poincar\'e inequality,
meaning that there exist constants $C_P>0$ and $\lambda \ge 1$ such that for every
ball $B(x,r)$, every $u\in L^1_{\loc}(X)$,
and every upper gradient $g$ of $u$,
we have
\[
\vint{B(x,r)}|u-u_{B(x,r)}|\, d\mu 
\le C_P r\vint{B(x,\lambda r)}g\,d\mu,
\]
where 
\[
u_{B(x,r)}:=\vint{B(x,r)}u\,d\mu :=\frac 1{\mu(B(x,r))}\int_{B(x,r)}u\,d\mu.
\]
Applying the Poincar\'e inequality to sequences of approximating locally
Lipschitz functions in the definition of the total variation gives
the following $\BV$ version:
for every ball $B(x,r)$ and every 
$u\in L^1_{\loc}(X)$, we have
\[
\vint{B(x,r)}|u-u_{B(x,r)}|\,d\mu
\le C_P r\, \frac{\Vert Du\Vert (B(x,\lambda r))}{\mu(B(x,\lambda r))}.
\]
For a $\mu$-measurable set $E\subset X$, the above implies
(see e.g. \cite[Equation (3.1)]{KoLa})
the relative isoperimetric inequality
\begin{equation}\label{eq:relative isoperimetric inequality}
\min\{\mu(B(x,r)\cap E),\,\mu(B(x,r)\setminus E)\}\le 2 C_P rP(E,B(x,\lambda r)).
\end{equation}
Moreover, from the $(1,1)$-Poincar\'e inequality, by \cite[Theorem 4.21, Theorem 5.51]{BB}
we get the following Sobolev inequality:
if $x\in X$, $0<r<\frac{1}{4}\diam X$, and $u\in N^{1,1}(X)$ with $u=0$
in $X\setminus B(x,r)$, then
\begin{equation}\label{eq:sobolev inequality}
\left(\,\vint{B(x,r)} |u|^{Q/(Q-1)}\,d\mu\right)^{(Q-1)/Q} \le C_S r \vint{B(x,r)}  g_u\,d\mu
\end{equation}
for a constant $C_S=C_S(C_d,C_P,\lambda)\ge 1$.
Then for any $x\in X$, any $0<r<\frac{1}{4}\diam X$,
and any $u\in L^1_{\loc}(X)$ with $u=0$ in $X\setminus B(x,r)$,
by applying the above to a suitable sequence approximating $u$, we obtain
\begin{equation}\label{eq:sobolev inequality for BV}
\left(\,\vint{B(x,r)} |u|^{Q/(Q-1)}\,d\mu\right)^{(Q-1)/Q}
\le C_S r \frac{\Vert Du\Vert(X)}{\mu(B(x,r))}.
\end{equation}
For any $\mu$-measurable set $E\subset B(x,r)$, this implies by H\"older's inequality
\begin{equation}\label{eq:isop inequality with zero boundary values}
\mu(E)\le C_S r P(E,X).
\end{equation}
Moreover, if $\Om\subset X$ is an open set with $\diam \Om<\frac 14 \diam X$
(meaning $\diam \Om<\infty$ in the case $\diam X=\infty$)
and $u\in L^1_{\loc}(X)$
with $u=0$ in $X\setminus\Om$, then we can take a ball
$B(x,r)\supset \Om$ with $r= \diam \Om$,
and so by \eqref{eq:sobolev inequality for BV} and H\"older's inequality
\begin{equation}\label{eq:Poincare ineq for BV with zero boundary values}
\int_{\Om} |u|\,d\mu\le C_{S}\diam \Om \Vert Du\Vert(X).
\end{equation}
 
The lower and upper approximate limits of a function $u$ on $X$ are defined respectively by
\begin{equation}\label{eq:lower approximate limit}
u^{\wedge}(x):
=\sup\left\{t\in\R:\,\lim_{r\to 0}\frac{\mu(\{u<t\}\cap B(x,r))}{\mu(B(x,r))}=0\right\}
\end{equation}
and
\begin{equation}\label{eq:upper approximate limit}
u^{\vee}(x):
=\inf\left\{t\in\R:\,\lim_{r\to 0}\frac{\mu(\{u>t\}\cap B(x,r))}{\mu(B(x,r))}=0\right\}.
\end{equation}
Unlike Newton-Sobolev functions, we understand $\BV$ functions to be
$\mu$-equivalence classes.
To consider fine properties, we need to
consider the pointwise representatives $u^{\wedge}$ and $u^{\vee}$.

\section{Superminimizers and obstacle problems}

In this section we consider superminimizers and solutions of obstacle problems
in the case $p=1$. The symbol $\Omega$ will always denote a nonempty open subset of $X$.
We denote by $\BV_c(\Om)$ the class of functions $\varphi\in\BV(\Om)$ with compact
support in $\Om$, that is, $\supp \varphi\Subset \Om$.

\begin{definition}
We say that $u\in\BV_{\loc}(\Om)$ is a $1$-minimizer  in $\Om$ if
for all $\varphi\in \BV_c(\Om)$,
\begin{equation}\label{eq:definition of 1minimizer}
\Vert Du\Vert(\supp\varphi)\le \Vert D(u+\varphi)\Vert(\supp\varphi).
\end{equation}
We say that $u\in\BV_{\loc}(\Om)$ is a $1$-superminimizer in $\Om$
if \eqref{eq:definition of 1minimizer} holds for all nonnegative $\varphi\in \BV_c(\Om)$.
We say that $u\in\BV_{\loc}(\Om)$ is a $1$-subminimizer in $\Om$ if
\eqref{eq:definition of 1minimizer} holds for all nonpositive $\varphi\in \BV_c(\Om)$,
or equivalently if $-u$ is a $1$-superminimizer in $\Om$.
\end{definition}

Equivalently, we can replace $\supp\varphi$ by any set $A\Subset \Om$ containing $\supp\varphi$
in the above definitions.
It is easy to see that if $u$ is a $1$-superminimizer and $a\ge 0$, $b\in \R$, then
$au+b$ is a $1$-superminimizer.

Given a nonempty bounded open set $\Om\subset X$, a function $\psi\colon\Om\to\overline{\R}$,
and $f\in L^1_{\loc}(X)$
with $\Vert Df\Vert(X)<\infty$, we define the class of admissible functions
\[
\mathcal K_{\psi,f}(\Om):=\{u\in\BV_{\loc}(X):\,u\ge \psi\textrm{ in }\Om\textrm{ and }u=f\textrm{ in }X\setminus\Om\}.
\]
The (in)equalities above are understood in the a.e. sense, since $\BV$ functions are only
defined up to sets of $\mu$-measure zero.
For brevity, we sometimes write $\mathcal K_{\psi,f}$ instead of $\mathcal K_{\psi,f}(\Om)$.
By using a cutoff function,
it is easy to show that $\Vert Du\Vert(X)<\infty$
for every $u\in\mathcal K_{\psi,f}(\Om)$.

\begin{definition}
We say that $u\in\mathcal K_{\psi,f}(\Om)$ is a solution of the $\mathcal K_{\psi,f}$-obstacle problem
if $\Vert Du\Vert(X)\le \Vert Dv\Vert(X)$ for all $v\in\mathcal K_{\psi,f}(\Om)$.
\end{definition}

\begin{proposition}\label{prop:existence of solutions}
If $\diam \Om<\frac 14\diam X$ and $\mathcal K_{\psi,f}(\Om)\neq \emptyset$,
then there exists a solution
of the $\mathcal K_{\psi,f}$-obstacle problem.
\end{proposition}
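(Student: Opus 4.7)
The plan is to apply the direct method of the calculus of variations: take a minimizing sequence, extract a convergent subsequence via a $\BV$ compactness argument, and then verify admissibility and minimality of the limit using lower semicontinuity of the total variation.

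First I would pick any $v_0\in \mathcal{K}_{\psi,f}(\Omega)$ (nonempty by assumption) and let $M:=\Vert Dv_0\Vert(X)<\infty$, then choose a minimizing sequence $u_i\in\mathcal{K}_{\psi,f}(\Omega)$ with $\Vert Du_i\Vert(X)\le M+1$ for all $i$. To upgrade this gradient bound to an $L^1_{\loc}$ bound, I set $h_i:=u_i-f$, which lies in $L^1_{\loc}(X)$ and vanishes outside $\Omega$. Using \eqref{eq:BV functions form vector space} I obtain $\Vert Dh_i\Vert(X)\le \Vert Du_i\Vert(X)+\Vert Df\Vert(X)$, which is uniformly bounded. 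Since $\diam\Omega<\tfrac14\diam X$, the Poincar\'e-type inequality \eqref{eq:Poincare ineq for BV with zero boundary values} applied to $h_i$ yields a uniform bound on $\int_\Omega|u_i-f|\,d\mu$, and hence on $\int_\Omega |u_i|\,d\mu$ since $f\in L^1_{\loc}(X)$ and $\Omega$ is bounded. Outside $\Omega$ we simply have $u_i=f$, so $\{u_i\}$ is uniformly bounded in $L^1_{\loc}(X)$.

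Next I would fix an exhausting sequence of balls $B_k\supset \Omega$ with $B_k\nearrow X$. On each $B_k$, the sequence $\{u_i\}$ is bounded in $\BV(B_k)$, so standard $\BV$ compactness (a consequence of the Poincar\'e inequality and the Rellich--Kondrachov-type embedding available in this setting) produces an $L^1(B_k)$-convergent subsequence. A diagonal extraction gives a subsequence, still denoted $u_i$, converging in $L^1_{\loc}(X)$ (and, after a further subsequence, pointwise a.e.) to some $u\in L^1_{\loc}(X)$. Since a.e. convergence preserves a.e. inequalities, $u\ge\psi$ a.e. in $\Omega$ and $u=f$ a.e. in $X\setminus\Omega$, so that $u\in\mathcal{K}_{\psi,f}(\Omega)$ as soon as we know $u\in\BV_{\loc}(X)$.

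Finally I would invoke lower semicontinuity of the total variation with respect to $L^1_{\loc}$ convergence, which is immediate from its definition as an infimum over approximating sequences (via a standard diagonal argument on the admissible $\liploc$ sequences). This gives
\[
\Vert Du\Vert(X)\le \liminf_{i\to\infty}\Vert Du_i\Vert(X)=\inf_{v\in\mathcal{K}_{\psi,f}(\Omega)}\Vert Dv\Vert(X),
\]
so in particular $\Vert Du\Vert(X)<\infty$, confirming $u\in\BV_{\loc}(X)$ and hence $u\in\mathcal{K}_{\psi,f}(\Omega)$, and simultaneously showing that $u$ attains the infimum. The only step requiring some care is the coercivity in the first paragraph: the hypothesis $\diam\Omega<\tfrac14\diam X$ enters precisely to make \eqref{eq:Poincare ineq for BV with zero boundary values} available for $u_i-f$, which is the device that turns the total variation bound into an $L^1$ bound on $\Omega$ without prescribing zero boundary data.
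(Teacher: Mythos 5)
Your proposal is correct and follows essentially the same route as the paper's proof: the direct method, using the Poincar\'e inequality \eqref{eq:Poincare ineq for BV with zero boundary values} on $u_i-f$ for coercivity, a $\BV$ compactness theorem for an $L^1_{\loc}$-convergent subsequence, and lower semicontinuity of the total variation to pass to the limit. The only cosmetic difference is that the paper cites the compactness theorem for $\BV(X)$ directly and obtains the limit $v=u-f$ in $\BV(X)$ in one step, whereas you reconstruct the $L^1_{\loc}$ compactness via an exhaustion and a diagonal argument, recovering $u\in\BV_{\loc}(X)$ afterwards from the lower semicontinuity bound.
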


\begin{proof}
Pick a sequence of functions $(u_i)\subset \mathcal K_{\psi,f}(\Om)$ with
\[
\lim_{i\to\infty}\Vert Du_i\Vert(X)
=\inf\{\Vert Dv\Vert(X):\,v\in \mathcal K_{\psi,f}(\Om)\}<\infty.
\]
By the Poincar\'e inequality \eqref{eq:Poincare ineq for BV with zero boundary values}
and the subadditivity \eqref{eq:BV functions form vector space},
we have for each $i\in\N$
\begin{align*}
\int_X|u_i-f|\,d\mu
&\le C_S\diam \Om \Vert D(u_i-f)\Vert(X)\\
&\le C_S\diam \Om (\Vert Du_i\Vert(X)+\Vert Df\Vert(X)),
\end{align*}
which is a bounded sequence. Thus $(u_i-f)$ is a bounded sequence in $\BV(X)$,
and so by \cite[Theorem 3.7]{M} there exists a subsequence (not relabeled) and a function $v\in\BV(X)$
such that $u_i-f\to v$ in $L_{\loc}^1(X)$. We can select a further subsequence
(not relabeled) such that $u_i(x)-f(x)\to v(x)$ for a.e. $x\in X$.
Hence, letting $u:=v+f$, we have
$u\ge \psi$ in $\Om$ and $u=f$ in $X\setminus \Om$.
Moreover, $u_i\to u$ in $L^1_{\loc}(X)$, and then
by lower semicontinuity of the total variation with respect
to $L^1$-convergence, we get
\[
\Vert Du\Vert(X)\le \liminf_{i\to\infty}\Vert Du_i\Vert(X),
\]
and so $u$ is a solution.
\end{proof}

Unlike in the case $p>1$, solutions are not generally unique, as can be easily seen
for example by considering translates of the Heaviside function on the real line.
The following fact, which is also in stark contrast to the case  $p>1$, is often useful.

\begin{proposition}\label{prop:set solution}
If $A\subset X$ and there exists a solution of the $\mathcal K_{\ch_A,0}$-obstacle problem,
then there exists a set $E\subset X$ such that $\ch_E$ is also a solution.
\end{proposition}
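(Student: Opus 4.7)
The natural approach is to apply the coarea formula \eqref{eq:coarea} to a given solution $u$ and show that almost every super-level set $\{u>t\}$ with $t\in[0,1)$ is itself a solution. First I note that any solution $u\in\mathcal K_{\ch_A,0}(\Om)$ satisfies $u\ge 0$ a.e.\ on $X$, since $u\ge \ch_A\ge 0$ in $\Om$ and $u=0$ in $X\setminus\Om$. Hence the coarea formula on $X$ reduces to
\[
\Vert Du\Vert(X)=\int_0^\infty P(\{u>t\},X)\,dt,
\]
because level sets at negative heights equal $X$ up to a null set and therefore have zero perimeter.

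The next step is admissibility of the sets $E_t:=\{u>t\}$ for $t\in[0,1)$. Since $u\ge 1$ a.e.\ on $A\cap\Om$, we have $\ch_{E_t}\ge \ch_A$ a.e.\ in $\Om$ whenever $t<1$, and $\ch_{E_t}=0$ a.e.\ in $X\setminus\Om$ since $u=0$ there. Thus $\ch_{E_t}\in\mathcal K_{\ch_A,0}(\Om)$, and the minimality of $u$ yields
\[
\Vert Du\Vert(X)\le \Vert D\ch_{E_t}\Vert(X)=P(E_t,X)\quad\text{for every }t\in[0,1).
\]

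Now I would combine these two facts. Integrating the previous inequality over $(0,1)$ gives
\[
\Vert Du\Vert(X)=\int_0^1\Vert Du\Vert(X)\,dt\le \int_0^1 P(E_t,X)\,dt\le \int_0^\infty P(\{u>t\},X)\,dt=\Vert Du\Vert(X),
\]
so all inequalities are equalities. Consequently $P(E_t,X)=\Vert Du\Vert(X)$ for a.e.\ $t\in[0,1)$, and for any such $t$ the characteristic function $\ch_{E_t}$ is admissible with total variation equal to the minimum. Choosing $E:=E_t$ for one such $t$ gives the required set.

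I do not anticipate any serious obstacle: the argument is entirely driven by the coarea formula plus the fact that super-level sets of an admissible function at heights $t\in[0,1)$ remain admissible. The only point requiring minor care is that the obstacle and boundary conditions are a.e.\ statements, so the pointwise inclusion $E_t\supset A\cap\Om$ and equality $E_t\cap(X\setminus\Om)=\emptyset$ have to be interpreted up to $\mu$-null sets, which is exactly the sense in which the class $\mathcal K_{\ch_A,0}(\Om)$ is defined.
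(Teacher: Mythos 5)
Your proof is correct and follows essentially the same route as the paper's: both rest on the coarea formula \eqref{eq:coarea} together with the observation that for $t\in(0,1)$ the super-level set $\{u>t\}$ remains admissible in $\mathcal K_{\ch_A,0}$. The paper simply picks one $t\in(0,1)$ with $P(\{u>t\},X)\le\Vert Du\Vert(X)$ (which must exist since otherwise the coarea integral would exceed $\Vert Du\Vert(X)$), whereas you carry out the integration over $(0,1)$ to conclude the slightly stronger fact that a.e.\ such $t$ yields a solution; this is a harmless elaboration of the same idea.
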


\begin{proof}
Let $u$ be a solution. By the coarea formula \eqref{eq:coarea} there exists
$t\in (0,1)$ such that $P(\{u>t\},X)\le \Vert Du\Vert(X)$.
Letting $E:=\{u>t\}$, we clearly have $\ch_E\ge \ch_A$ in $\Om$ and $\ch_E=0$
in $X\setminus\Om$. Thus $\ch_E\in \mathcal K_{\ch_A,0}$ and so it is a solution.
\end{proof}

Whenever the characteristic function of a set $E$
is a solution of an obstacle problem,
for simplicity we will call $E$ a solution as well.
Similarly, if $\psi=\ch_A$ for some $A\subset X$, we let
$\mathcal K_{A, f}:=\mathcal K_{\psi, f}$.

The following simple fact will be of much use to us.
\begin{lemma}\label{lem:solutions from capacity}
	If $x\in X$, $0<r<R<\frac 18 \diam X$, and $A\subset B(x,r)$, then there exists
	$E\subset X$ that is a solution of the $\mathcal K_{A,0}(B(x,R))$-obstacle problem
	with
	\[
	P(E,X)\le \rcapa_1(A,B(x,R)).
	\]
\end{lemma}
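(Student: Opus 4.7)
The plan is to combine the existence result (Proposition \ref{prop:existence of solutions}) with the reduction to characteristic functions (Proposition \ref{prop:set solution}), and then compare the resulting total variation against competitors for the variational capacity.

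First I would verify that $\mathcal{K}_{A,0}(B(x,R))$ is nonempty, as required by Proposition \ref{prop:existence of solutions}: the Lipschitz cutoff
\[
\eta(y) := \max\!\left\{0,\, 1 - \frac{\dist(y, \overline{B(x,r)})}{R-r}\right\}
\]
lies in $N^{1,1}(X)\subset \BV(X)$, equals $1$ on $B(x,r)\supset A$, and vanishes outside $B(x,R)$. Since $\diam B(x,R)\le 2R < \tfrac{1}{4}\diam X$, Proposition \ref{prop:existence of solutions} then produces a solution $u\in\mathcal{K}_{A,0}(B(x,R))$, and Proposition \ref{prop:set solution} yields a set $E\subset X$ such that $\ch_E$ is also a solution.

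For the quantitative bound, take any $v\in N^{1,1}(X)$ that is admissible for $\rcapa_1(A,B(x,R))$; by the remark following the capacity definition we may assume $0\le v\le 1$. Then $v\ge \ch_A$ a.e.\ in $B(x,R)$ (noting that $\capa_1$-null sets are $\mu$-null) and $v=0$ in $X\setminus B(x,R)$, so $v\in \mathcal{K}_{A,0}(B(x,R))$. Using that $\ch_E$ solves the obstacle problem,
\[
P(E,X) = \Vert D\ch_E\Vert(X)\le \Vert Dv\Vert(X)\le \int_X g_v\,d\mu,
\]
where the final inequality is the standard comparison between total variation and the minimal $1$-weak upper gradient of a Newton-Sobolev function (obtained by inserting locally Lipschitz $N^{1,1}$-approximants of $v$ into the defining infimum of $\Vert Dv\Vert$). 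Taking the infimum over all such $v$ gives the desired bound.

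No step looks genuinely difficult; the one point worth checking carefully is the final inequality $\Vert Dv\Vert(X)\le \int_X g_v\,d\mu$, which rests on the density of locally Lipschitz functions in $N^{1,1}(X)$ available in our Poincar\'e inequality setting.
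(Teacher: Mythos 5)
Your proof is correct and follows essentially the same route as the paper's: both reduce the bound to the observation that any Newton--Sobolev competitor $v$ for $\rcapa_1(A,B(x,R))$ lies in $\mathcal K_{A,0}(B(x,R))$ and satisfies $\Vert Dv\Vert(X)\le\int_X g_v\,d\mu$, so the obstacle-problem minimum is dominated by the capacity. The paper fixes an $\eps$-near-optimal capacity test function up front (and uses its mere existence to establish nonemptiness of $\mathcal K_{A,0}$), then lets $\eps\to 0$, whereas you first build an explicit Lipschitz cutoff to get nonemptiness and then take an infimum over all admissible $v$; these are minor presentational variants of the same argument.
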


\begin{proof}
	Fix $\eps>0$.
	Since $A\Subset B(x,R)$, clearly $\rcapa_1(A,B(x,R))<\infty$.
	By the definition of the variational capacity, we find $u\in N^{1,1}(X)$
	with $u\ge 1$ in $A$, $u=0$ in $X\setminus B(x,R)$, and 
	\[
	\rcapa_1(A,B(x,R))+\eps\ge \int_X g_u\,d\mu\ge \Vert Du\Vert(X),
	\]
	where the last inequality follows from the fact that Lipschitz functions are dense in
	$N^{1,1}(X)$, see \cite{S} or \cite[Theorem 5.1]{BB}.
	Now $u\in \mathcal K_{A,0}(\Om)$.
	Since $\eps>0$ was arbitrary, 
	by Proposition \ref{prop:existence of solutions} and Proposition \ref{prop:set solution}
	we conclude that the
	$\mathcal K_{A,0}(B(x,R))$-obstacle problem has a solution $E\subset X$
	such that $P(E,X)\le \rcapa_1(A,B(x,R))$.
\end{proof}

The following fact follows directly from the definitions.

\begin{proposition}
If $u\in\mathcal K_{\psi,f}(\Om)$ is a solution
of the $\mathcal K_{\psi,f}$-obstacle problem, then $u$
is a $1$-superminimizer in $\Om$.
\end{proposition}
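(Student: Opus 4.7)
The plan is to show directly that any nonnegative test function in $\BV_c(\Om)$, when added to $u$, yields a competitor in the obstacle class, and then exploit the fact that $u$ and its competitor agree off a compact set.

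Fix a nonnegative $\varphi\in\BV_c(\Om)$ and set $K:=\supp\varphi\Subset\Om$. First I would verify that $v:=u+\varphi\in\mathcal K_{\psi,f}(\Om)$. Nonnegativity of $\varphi$ gives $v\ge u\ge \psi$ a.e.\ in $\Om$; since $K\subset\Om$, we have $\varphi=0$ a.e.\ in $X\setminus\Om$, so $v=u=f$ a.e.\ in $X\setminus\Om$. Membership in $\BV_{\loc}(X)$ follows from \eqref{eq:BV functions form vector space}, which also shows $\Vert Dv\Vert(X)\le \Vert Du\Vert(X)+\Vert D\varphi\Vert(X)<\infty$. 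Hence $v\in\mathcal K_{\psi,f}(\Om)$, and by the minimizing property of $u$,
\[
\Vert Du\Vert(X)\le \Vert Dv\Vert(X).
\]

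Next I would localize this global inequality to $K$. Since $u=v$ a.e.\ on the open set $X\setminus K$ (here $K$ is compact, so $X\setminus K$ is open), the total variation measures $\Vert Du\Vert$ and $\Vert Dv\Vert$, restricted to $X\setminus K$, depend only on the values of $u$ and $v$ there and therefore agree:
\[
\Vert Du\Vert(X\setminus K)=\Vert Dv\Vert(X\setminus K).
\]
Because $\Vert Du\Vert$ and $\Vert Dv\Vert$ are finite Radon measures on $X$, and $K,X\setminus K$ partition $X$, subtracting this identity from the minimizing inequality gives
\[
\Vert Du\Vert(K)\le \Vert Dv\Vert(K)=\Vert D(u+\varphi)\Vert(\supp\varphi),
\]
which is exactly the $1$-superminimizer inequality \eqref{eq:definition of 1minimizer} tested against $\varphi$.

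The only nontrivial point is the equality $\Vert Du\Vert(X\setminus K)=\Vert Dv\Vert(X\setminus K)$, which relies on the total variation being a local object on open sets: since $v-u=\varphi$ vanishes on $X\setminus K$, any sequence of locally Lipschitz functions approximating $u$ in $L^1_{\loc}(X\setminus K)$ also approximates $v$ there (and vice versa), so the two infima defining $\Vert Du\Vert(X\setminus K)$ and $\Vert Dv\Vert(X\setminus K)$ coincide. Everything else is a direct application of the definitions and the solution property of $u$.
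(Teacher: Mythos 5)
Your proof is correct, and since the paper gives no written proof (it simply states that the proposition ``follows directly from the definitions''), your argument spells out exactly the intended elementary reasoning: take a nonnegative $\varphi\in\BV_c(\Om)$, observe that $u+\varphi$ remains an admissible competitor in $\mathcal K_{\psi,f}(\Om)$, invoke the global minimality $\Vert Du\Vert(X)\le\Vert D(u+\varphi)\Vert(X)$, and then cancel the common contribution from the open set $X\setminus\supp\varphi$, where $u$ and $u+\varphi$ coincide a.e., using that both total variations are finite Radon measures and that the total variation over an open set depends only on the function restricted to that set.

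One small point worth making explicit: when you write $v:=u+\varphi\in\BV_{\loc}(X)$ ``by \eqref{eq:BV functions form vector space},'' you are tacitly using that the zero extension of $\varphi\in\BV_c(\Om)$ lies in $\BV(X)$. This is indeed the convention the paper uses when it writes test functions $\varphi\in\BV_c(\Om)$ in formulas on $X$, and it holds because $\varphi=0$ on the open set $X\setminus\supp\varphi$ while $\Vert D\varphi\Vert(\Om)<\infty$, but it is the same kind of locality fact you invoke at the end, so it is cleaner to state it once. Otherwise the argument is complete: the decomposition $\Vert Du\Vert(X)=\Vert Du\Vert(\supp\varphi)+\Vert Du\Vert(X\setminus\supp\varphi)$ is legitimate because $\supp\varphi$ is compact, hence Borel, and $\Vert Du\Vert$ is a finite measure; the cancellation is legitimate because $\Vert Du\Vert(X\setminus\supp\varphi)=\Vert Dv\Vert(X\setminus\supp\varphi)<\infty$; and what remains is precisely \eqref{eq:definition of 1minimizer} tested with $\varphi$.
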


Next we prove De Giorgi-type and weak Harnack inequalities for $1$-sub\-minimizers and certain
solutions of obstacle problems. The arguments we use are mostly standard and have been employed
in the metric setting previously in \cite{BB,HKL,KKLS}, but only for (quasi)minimizers or
in the case $p>1$, so we repeat the entire proofs with small modifications and some simplifications.

\begin{proposition}
Suppose $k\in\R$ and $B(x,s_2)\Subset \Om$, and
assume either that
\begin{enumerate}[{(a)}]
\item $u$ is a $1$-subminimizer in $\Om$, or
\item $\Om$ is bounded, $u$ is a solution of the
$\mathcal K_{\psi, f}(\Om)$-obstacle problem,
and $\psi\le k$ a.e. in $B(x,s_2)$.
\end{enumerate}
Then if $0<s_1<s_2$,
\begin{equation}\label{eq:De Giorgi class}
\Vert D(u-k)_+\Vert(B(x,s_1))\le \frac{2}{s_2-s_1}\int_{B(x,s_2)}(u-k)_+\,d\mu.
\end{equation}
\end{proposition}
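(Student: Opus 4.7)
The plan is a Caccioppoli-type argument, testing the sub- or obstacle-minimizing condition of $u$ against a cutoff of $(u-k)_+$. First I would fix a Lipschitz cutoff $\eta\colon X\to[0,1]$ with $\eta=1$ on $B(x,s_1)$, $\eta=0$ outside $B(x,s_2)$, and Lipschitz constant $L\le 1/(s_2-s_1)$; concretely, $\eta(y):=\min\{1,(s_2-d(x,y))_+/(s_2-s_1)\}$ works. Set
\[
\varphi:=-\eta(u-k)_+\in\BV_c(\Om)\qquad\text{and}\qquad v:=u+\varphi=u-\eta(u-k)_+.
\]
Then $\varphi\le 0$ and $\supp\varphi\subset A:=\overline{B(x,s_2)}\Subset\Om$.

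In case (a), the $1$-subminimizer condition on $u$ directly yields $\Vert Du\Vert(A)\le\Vert Dv\Vert(A)$. In case (b), I would first verify $v\in\mathcal K_{\psi,f}(\Om)$: outside $B(x,s_2)$ we have $v=u=f$, and a.e.\ on $B(x,s_2)$,
\[
v=u-\eta(u-k)_+\ge u-(u-k)_+=\min(u,k)\ge\min(\psi,k)=\psi,
\]
using $u\ge\psi$ together with the hypothesis $\psi\le k$ a.e.\ on $B(x,s_2)$. The minimality of $u$ then gives $\Vert Du\Vert(X)\le\Vert Dv\Vert(X)$; since $u=v$ on the open set $X\setminus A$, the total variation measures of $u$ and $v$ agree there, so subtraction reduces this also to $\Vert Du\Vert(A)\le\Vert Dv\Vert(A)$.

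The algebraic heart of the argument is that on $\{u>k\}$ one has $v-k=(1-\eta)(u-k)\in[0,u-k]$, while on $\{u\le k\}$ one has $v=u\le k$; hence
\[
(v-k)_+=(1-\eta)(u-k)_+\qquad\text{and}\qquad \min(v,k)=\min(u,k).
\]
Splitting the coarea formula at $t=k$ gives the decomposition $\Vert Du\Vert(A)=\Vert D(u-k)_+\Vert(A)+\Vert D\min(u,k)\Vert(A)$ (with the analogous identity for $v$); because of the second identity displayed above, the $\min$-contributions cancel and the comparison $\Vert Du\Vert(A)\le\Vert Dv\Vert(A)$ simplifies to
\[
\Vert D(u-k)_+\Vert(A)\le \Vert D((1-\eta)(u-k)_+)\Vert(A).
\]

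Finally, writing $w:=(u-k)_+\ge 0$ and approximating $w$ in $L^1_{\loc}$ by locally Lipschitz $w_i$ whose gradient measures converge weakly to $\Vert Dw\Vert$, the elementary upper-gradient product rule $g_{(1-\eta)w_i}\le(1-\eta)g_{w_i}+Lw_i$ yields the Leibniz-type bound
\[
\Vert D((1-\eta)w)\Vert(A)\le \int_A(1-\eta)\,d\Vert Dw\Vert+L\int_A w\,d\mu.
\]
Combining this with the reduction above and rewriting $\int_A(1-\eta)\,d\Vert Dw\Vert=\Vert Dw\Vert(A)-\int_A\eta\,d\Vert Dw\Vert$, I obtain $\int_A\eta\,d\Vert Dw\Vert\le L\int_A w\,d\mu$. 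Since $\eta=1$ on $B(x,s_1)\subset A$, the left side bounds $\Vert Dw\Vert(B(x,s_1))$ from above, giving the claimed inequality with constant $L\le 1/(s_2-s_1)$ (the factor $2$ in the statement is not sharp). The main technical hurdles are the rigorous justification of the Leibniz-type estimate and of the coarea-based additivity in the metric $\BV$ framework; both reduce to standard approximation arguments using the density of locally Lipschitz functions in $\BV(X)$.
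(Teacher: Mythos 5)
Your proof follows essentially the same route as the paper's: test the minimality with $\varphi=-\eta(u-k)_+$, verify admissibility in case (b) using $\psi\le k$, reduce via the coarea additivity $\Vert Du\Vert=\Vert D\min\{u,k\}\Vert+\Vert D(u-k)_+\Vert$ (together with $\min\{v,k\}=\min\{u,k\}$ and $(v-k)_+=(1-\eta)(u-k)_+$) to the comparison $\Vert D(u-k)_+\Vert\le\Vert D((1-\eta)(u-k)_+)\Vert$, and finish with the Leibniz-type estimate; the paper does the same, citing Lemmas 3.2 and 3.5 of \cite{HKLS} for the last two ingredients (and using subadditivity on the $v$-side rather than the full coarea splitting, which is an inessential difference). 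One small wrinkle, though: your explicit choice $\eta=\min\{1,(s_2-d(x,\cdot))_+/(s_2-s_1)\}$ has support equal to $\overline{B(x,s_2)}$, not compactly contained in $B(x,s_2)$, so your final estimate reads $\Vert D(u-k)_+\Vert(B(x,s_1))\le\tfrac{1}{s_2-s_1}\int_{\overline{B(x,s_2)}}(u-k)_+\,d\mu$; if $\mu(\partial B(x,s_2))>0$ (which the paper's hypotheses do not rule out for a fixed radius $s_2$) this is not literally the stated bound, and your remark that you improve the constant to $1/(s_2-s_1)$ is therefore not quite justified. The paper avoids this by building in the factor $2$ and taking $\eta$ compactly supported inside $B(x,s_2)$ with $g_\eta\le 2/(s_2-s_1)$, so that all quantities live on the open ball; alternatively one can run your argument on $\overline{B(x,s)}$ for $s_1<s<s_2$ and let $s\uparrow s_2$.
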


\begin{proof}
Take a Lipschitz function $0\le \eta\le 1$ with compact support in $B(x,s_2)$, such that
$\eta=1$ in $B(x,s_1)$ and $g_{\eta}\le 2/(s_2-s_1)$.
It is straightforward to verify that $\eta (u-k)_+\in \BV_c(\Om)$.
Now if $u$ is a $1$-subminimizer (alternative (a)), we get
\begin{equation}\label{eq:first energy comparison}
\Vert Du\Vert(B(x,s_2))\le \Vert D(u-\eta(u-k)_+)\Vert(B(x,s_2)).
\end{equation}
In alternative (b), we have at a.e. point in $\Om$ either
$u-\eta(u-k)_+=u\ge \psi$ or $u-\eta(u-k)_+=(1-\eta)u+\eta k\ge \psi$, and
moreover $u-\eta(u-k)_+\in\BV_{\loc}(X)$, so then $u-\eta(u-k)_+\in\mathcal K_{\psi,f}(\Om)$,
and thus \eqref{eq:first energy comparison} again holds.
Note that $u-\eta(u-k)_+=\min\{u,k\}+(1-\eta)(u-k)_+$.
Using the coarea formula it can be shown that
$\Vert Du\Vert=\Vert D\min\{u,k\}\Vert+\Vert D(u-k)_+\Vert$ as measures in $\Om$,
see \cite[Lemma 3.5]{HKLS}, and thus we get
\begin{align*}
&\Vert D\min\{u,k\}\Vert(B(x,s_2))+\Vert D(u-k)_+\Vert(B(x,s_2))\\
&\qquad  = \Vert Du\Vert(B(x,s_2))\\
&\qquad  \le\Vert D(\min\{u,k\}+(1-\eta)(u-k)_+)\Vert(B(x,s_2))\qquad\textrm{by }\eqref{eq:first energy comparison}\\
&\qquad  \le\Vert D\min\{u,k\}\Vert(B(x,s_2))+\Vert D((1-\eta)(u-k)_+)\Vert(B(x,s_2))
\end{align*}
by \eqref{eq:BV functions form vector space}.
Since $\Vert D\min\{u,k\}\Vert(B(x,s_2))\le \Vert Du\Vert(B(x,s_2))<\infty$, we get
\begin{equation}\label{eq:second energy comparison}
\Vert D(u-k)_+\Vert(B(x,s_2))\le \Vert D((1-\eta)(u-k)_+)\Vert(B(x,s_2)).
\end{equation}
Here we have by a Leibniz rule, see \cite[Lemma 3.2]{HKLS},
\begin{align*}
&\Vert D((1-\eta)(u-k)_+)\Vert(B(x,s_2))\\
&\qquad\le \int_{B(x,s_2)}g_{\eta}(u-k)_+\,d\mu+\int_{B(x,s_2)}(1-\eta)\,d\Vert D(u-k)_+\Vert\\
&\qquad\le \frac{2}{s_2-s_1}\int_{B(x,s_2)}(u-k)_+\,d\mu+
\Vert D(u-k)_+\Vert(B(x,s_2)\setminus B(x,s_1)).
\end{align*}
Noting that also
$\Vert D(u-k)_+\Vert(B(x,s_2)\setminus B(x,s_1))<\infty$, we combine the above
with \eqref{eq:second energy comparison} to get the result.
\end{proof}

In proving the following weak Harnack inequality, we closely follow
\cite[Proposition 8.2]{BB}, where the analogous result
is proved in the case $p>1$.
Recall the definition of the exponent $Q>1$ from \eqref{eq:homogenous dimension}.

\begin{proposition}\label{prop:weak Harnack}
Let $u\in\BV(B(x,R))$ such that \eqref{eq:De Giorgi class} holds for all
$0< s_1< s_2\le R<\frac 14 \diam X$
and all $k\ge k^*\in \R$. Let $k_0\ge k^*$ and $r\in (0,R)$. Then
\[
\esssup_{B(x,r)}u\le C_1\left(\frac{R}{R-r}\right)^{Q}\vint{B(x,R)}(u-k_0)_+\,d\mu+k_0
\]
for some constant $C_1=C_1(C_d,C_P,\lambda)$.
\end{proposition}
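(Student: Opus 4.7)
The plan is to carry out the classical De Giorgi iteration, following the template of \cite[Proposition 8.2]{BB} but adapted to $p=1$ via the $\BV$ Sobolev inequality \eqref{eq:sobolev inequality for BV}. I introduce the shrinking radii $r_j := r + (R-r)2^{-j}$, the balls $B_j := B(x,r_j)$, and the increasing levels $c_j := k_0 + d(1-2^{-j})$, where $d>0$ is a parameter to be fixed at the end. Set $I_j := \int_{B_j}(u-c_j)_+\,d\mu$ and $Y_j := I_j/\mu(B_j)$. The goal is to establish a one-step recursion of the form
\[
Y_{j+1} \le \frac{C R}{(R-r)\,d^{1/Q}}\,b^j\,Y_j^{1+1/Q}, \qquad b := 2^{1+1/Q},
\]
with $C=C(C_d,C_P,\lambda)$, so that the standard iteration lemma (if $Y_{j+1}\le K b^j Y_j^{1+\alpha}$ and $Y_0\le K^{-1/\alpha}b^{-1/\alpha^2}$, then $Y_j\to 0$) forces $Y_j\to 0$ whenever $d$ is chosen of the order $C_1(R/(R-r))^Q Y_0$.

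To produce the recursion, I apply \eqref{eq:De Giorgi class} with $k=c_{j+1}\ge k^*$, $s_1=(r_j+r_{j+1})/2$, and $s_2=r_j$, and then multiply by a Lipschitz cutoff $\eta_j$ that equals $1$ on $B_{j+1}$, is supported in $B(x,(r_j+r_{j+1})/2)$, and has $g_{\eta_j}\le C\cdot 2^j/(R-r)$. The Leibniz rule for $\BV$ functions \cite[Lemma 3.2]{HKLS}, combined with the Caccioppoli bound, gives
\[
\Vert D[\eta_j(u-c_{j+1})_+]\Vert(X)\le \frac{C\cdot 2^j}{R-r}\int_{B_j}(u-c_{j+1})_+\,d\mu.
\]
Since $\eta_j(u-c_{j+1})_+$ vanishes outside $B_j$ and $r_j\le R<\tfrac14\diam X$, the $\BV$ Sobolev inequality \eqref{eq:sobolev inequality for BV} on $B_j$ yields
\[
\left(\int_{B_{j+1}}(u-c_{j+1})_+^{Q/(Q-1)}\,d\mu\right)^{(Q-1)/Q}\le \frac{C R\cdot 2^j}{(R-r)\,\mu(B_j)^{1/Q}}\int_{B_j}(u-c_{j+1})_+\,d\mu.
\]
Chebyshev's inequality, together with the observation that $(u-c_j)_+\ge c_{j+1}-c_j = d\cdot 2^{-j-1}$ on $\{u>c_{j+1}\}$, gives $\mu(B_{j+1}\cap\{u>c_{j+1}\})\le 2^{j+1}I_j/d$. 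Hölder's inequality then combines the last two displays into
\[
I_{j+1}\le \frac{C R\cdot 2^{j(1+1/Q)}}{(R-r)\,d^{1/Q}\,\mu(B_j)^{1/Q}}\,I_j^{1+1/Q}.
\]
Dividing by $\mu(B_{j+1})$ and using doubling in the form $\mu(B_j)\le C_d\,\mu(B_{j+1})$, valid since $r_j\le 2r_{j+1}$, produces the promised recursion for $Y_j$.

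With the recursion in hand, choosing $d = C_1(R/(R-r))^Q\vint{B(x,R)}(u-k_0)_+\,d\mu$ with $C_1$ large enough verifies the iteration-lemma hypothesis on $Y_0=\vint{B(x,R)}(u-k_0)_+\,d\mu$, so $Y_j\to 0$. This forces $\int_{B(x,r)}(u-k_0-d)_+\,d\mu\le I_j\to 0$, hence $u\le k_0+d$ a.e.\ on $B(x,r)$, which is exactly the desired estimate. The main bookkeeping obstacle is obtaining the sharp exponent $Q$ on $R/(R-r)$: this requires the normalization $Y_j=I_j/\mu(B_j)$ (not $I_j/\mu(B_0)$), so that the factors $\mu(B_j)^{-1/Q}$ and $(\mu(B_j)Y_j)^{1+1/Q}$ combine via doubling into a bounded constant. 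One must also ensure that the cutoff $\eta_j$ is supported \emph{strictly} inside $B_j$, so that the Caccioppoli hypothesis, which only controls $\Vert D(u-c_{j+1})_+\Vert$ on a ball smaller than $B_j$, can be inserted into the Leibniz bound on $\supp \eta_j$.
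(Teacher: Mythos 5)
Your proof is correct and follows essentially the same De Giorgi iteration scheme as the paper: both apply the Caccioppoli bound \eqref{eq:De Giorgi class}, multiply by a cutoff supported strictly inside the larger ball, use the Leibniz rule and the $\BV$ Sobolev inequality \eqref{eq:sobolev inequality for BV}, then combine with Chebyshev and H\"older before iterating over the radii $r+(R-r)2^{-j}$ and levels $k_0+d(1-2^{-j})$. The only difference is cosmetic: the paper first derives the one-step inequality for arbitrary radii $r_1<r_2$ and levels $l_1<l_2$ and then specializes to the dyadic sequences, and it verifies convergence by an explicit induction $u(k_i,\rho_i)\le 2^{-i(1+Q)}u(k_0,R)$ rather than invoking the iteration lemma, but the substance is identical.
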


\begin{proof}
Choose $r\le r_1< r_2\le R$ and let $\rho:=(r_1+r_2)/2$.
Let $\eta$ be a $2/(r_2-r_1)$-Lipschitz function such that $0\le \eta\le 1$,
$\eta=1$ in $B(x,r_1)$, and $\eta=0$ outside $B(x,\rho)$.
Let $l_2>l_1\ge k^*$, and
\[
A:=\{u>l_2\}\cap B(x,\rho).
\]
Now
\[
\mu(A)\le \frac{1}{l_2-l_1}\int_{B(x,\rho)}(u-l_1)_+\,d\mu
\le \frac{1}{l_2-l_1}\int_{B(x,r_2)}(u-l_1)_+\,d\mu.
\]
Let $v:=\eta (u-l_2)_+$. 
By H\"older's inequality
\begin{equation}\label{eq:using Holder}
\begin{split}
&\int_{B(x,r_1)}(u-l_2)_+\,d\mu
\le \int_{B(x,\rho)}v\,d\mu\\
&\qquad\qquad\le \left(\int_{B(x,\rho)}v^{Q/(Q-1)}\,d\mu\right)^{(Q-1)/Q}\mu(A)^{1/Q}\\
&\qquad\qquad\le\left(\int_{B(x,\rho)}v^{Q/(Q-1)}\,d\mu\right)^{(Q-1)/Q}
\left(\frac{1}{l_2-l_1}\int_{B(x,r_2)}(u-l_1)_+\,d\mu\right)^{1/Q}.
\end{split}
\end{equation}
By the Sobolev inequality \eqref{eq:sobolev inequality for BV}
(here we need $R<\frac 14 \diam X$)
\begin{equation}\label{eq:using Sobolev}
\begin{split}
&\left(\int_{B(x,\rho)}v^{Q/(Q-1)}\,d\mu\right)^{(Q-1)/Q}\\
&\qquad\quad= \left(\int_{B(x,r_2)}v^{Q/(Q-1)}\,d\mu\right)^{(Q-1)/Q}\\
&\qquad\quad\le \frac{C_{S} r_2}{\mu(B(x,r_2))^{1/Q}}\Vert Dv\Vert(X)\\
&\qquad\quad\le \frac{C_{S} r_2}{\mu(B(x,r_2))^{1/Q}}\left(\int_X \eta\,d\Vert D(u-l_2)_+\Vert
+\int_{X}g_{\eta}(u-l_2)_+\,d\mu \right),
\end{split}
\end{equation}
where the last inequality follows from a Leibniz rule, see \cite[Lemma 3.2]{HKLS}.
Note that $g_{\eta}=0$ in $X\setminus B(x,\rho)$, see \cite[Corollary 2.21]{BB}.
By using this and the assumption
of the proposition, we can estimate
\begin{align*}
&\int_X \eta\,d\Vert D(u-l_2)_+\Vert
+\int_{X}g_{\eta}(u-l_2)_+\,d\mu\\
&\qquad\qquad\le \Vert D(u-l_2)_+\Vert(B(x,\rho))
+\int_{B(x,\rho)}g_{\eta}(u-l_2)_+\,d\mu\\
&\qquad\qquad\le
\frac{2}{r_2-\rho}\int_{B(x,r_2)}(u-l_2)_+\,d\mu
+\frac{2}{r_2-r_1}\int_{B(x,\rho)}(u-l_2)_+\,d\mu \\
&\qquad\qquad\le \frac{6}{r_2-r_1}\int_{B(x,r_2)}(u-l_2)_+\,d\mu.
\end{align*}
By combining this with \eqref{eq:using Holder} and \eqref{eq:using Sobolev}, we get
(note that $l_1<l_2$)
\begin{align*}
&\int_{B(x,r_1)}(u-l_2)_+\,d\mu
\le\frac{6 C_{S} r_2}{\mu(B(x,r_2))^{1/Q}(r_2-r_1)}
\int_{B(x,r_2)}(u-l_1)_+\,d\mu\\
&\qquad\qquad\qquad\qquad\qquad \times
\left(\frac{1}{l_2-l_1}\int_{B(x,r_2)}(u-l_1)_+\,d\mu\right)^{1/Q}.
\end{align*}
Let
\[
u(k,s):=\vint{B(x,s)}(u-k)_+\,d\mu.
\]
Since $\mu(B(x,r_2))\le C_d \mu(B(x,r_1))$, letting $C_0:=6C_S C_d$ we get
\[
u(l_2,r_1)\le \frac{C_0 r_2}{(r_2-r_1)(l_2-l_1)^{1/Q}} u(l_1,r_2)^{1+1/Q}.
\]
For $i=0,1,\ldots$, let $\rho_i:=r+2^{-i}(R-r)$ and $k_i:=k_0+d(1-2^{-i})$,
where $d>0$ is chosen below.
We show by induction that $u(k_i,\rho_i)\le 2^{-i(1+Q)} u(k_0,R)$ for $i=0,1,\ldots$.
This is clearly true for $i=0$.
Assuming the claim is true for $i$, we have
\begin{align*}
u(k_{i+1},\rho_{i+1})
&\le  \frac{C_0 R}{(\rho_i-\rho_{i+1})(k_{i+1}-k_i)^{1/Q}} u(k_i,\rho_i)^{1+1/Q}\\
&\le \frac{C_0 R}{2^{-(i+1)}(R-r)d^{1/Q}2^{-(i+1)/Q}} u(k_i,\rho_i)^{1+1/Q}\\
&\le \frac{C_0 R}{(R-r)d^{1/Q}} 2^{(i+1)(1+1/Q)}\left(2^{-i(1+Q)}u(k_0,R)\right)^{1+1/Q}\\
&=2^{-(i+1)(1+Q)}u(k_0,R)
\end{align*}
if $d=2^{(Q+1)^2}(C_0 R)^Q u(k_0,R)/(R-r)^Q$ (note that we can assume $u(k_0,R)>0$),
and so the claim is true for $i+1$.
It follows that $u(k_0+d,r)=0$, meaning that $u\le k_0+d$ a.e. in $B(x,r)$.
\end{proof}

We combine the previous two propositions to get the following theorem. Recall that $\Om$
always denotes a nonempty open set.

\begin{theorem}\label{thm:weak Harnack}
Suppose $k\in\R$ and $0<R<\tfrac 14 \diam X$ with $B(x,R)\Subset \Om$, and
assume either that
\begin{enumerate}[{(a)}]
\item $u$ is a $1$-subminimizer in $\Om$, or
\item $\Om$ is bounded, $u$ is a solution of the
$\mathcal K_{\psi, f}(\Om)$-obstacle problem,
and $\psi\le k$ a.e. in $B(x,R)$.
\end{enumerate}
Then for any $0<r<R$,
\[
\esssup_{B(x,r)}u\le C_1\left(\frac{R}{R-r}\right)^{Q}\vint{B(x,R)}(u-k)_+\,d\mu+k.
\]
\end{theorem}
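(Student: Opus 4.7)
The plan is to deduce this theorem directly from Proposition \ref{prop:weak Harnack} by verifying that its hypotheses follow from the preceding De Giorgi-type proposition, applied to the family of threshold levels $k' \ge k$. So first I would set $k^* := k$ and then, for each $k' \ge k^*$ and each pair $0 < s_1 < s_2 \le R$, check that the previous proposition supplies inequality \eqref{eq:De Giorgi class} on the balls $B(x,s_1) \subset B(x,s_2) \Subset \Omega$ (the compact containment inherits from $B(x,R) \Subset \Omega$).

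In case (a), $u$ is a $1$-subminimizer in $\Omega$, so alternative (a) of the preceding proposition applies with any real threshold, in particular with $k'$, and yields \eqref{eq:De Giorgi class} on $B(x,s_1) \subset B(x,s_2)$ for all $k' \ge k$. In case (b), $u$ solves the $\mathcal K_{\psi,f}(\Omega)$-obstacle problem and $\psi \le k$ a.e. in $B(x,R)$; since $k' \ge k$, we also have $\psi \le k'$ a.e. in $B(x,s_2) \subset B(x,R)$, so alternative (b) of the preceding proposition again delivers \eqref{eq:De Giorgi class} for the required $(s_1,s_2,k')$. In both cases, $u \in \BV(B(x,R))$ (under (a) from $u \in \BV_{\loc}(\Omega)$ together with $B(x,R) \Subset \Omega$, and under (b) directly from $u \in \mathcal K_{\psi,f}$ which forces $\Vert Du\Vert(X) < \infty$).

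Having verified the hypothesis of Proposition \ref{prop:weak Harnack} with $k^* = k$, I would then invoke that proposition with $k_0 := k \ge k^*$, which gives exactly
\[
\esssup_{B(x,r)} u \le C_1 \left(\frac{R}{R-r}\right)^{Q} \vint{B(x,R)} (u-k)_+ \, d\mu + k,
\]
with the same constant $C_1 = C_1(C_d, C_P, \lambda)$. There is no real obstacle here, since the work has already been absorbed into the two preceding propositions; the only nontrivial point to guard against is making sure that the condition ``$\psi \le k$ a.e.\ on $B(x,R)$'' in (b) transfers to the inner balls $B(x,s_2)$ and to all levels $k' \ge k$, which is immediate by monotonicity.
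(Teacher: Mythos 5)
Your proposal is correct and matches the paper's intent exactly: the paper gives no explicit proof, stating only that the theorem follows by combining the preceding De Giorgi-type proposition with Proposition \ref{prop:weak Harnack}, which is precisely the bookkeeping you carry out (set $k^*=k$, transfer the obstacle bound $\psi\le k\le k'$ to the inner balls, confirm $u\in\BV(B(x,R))$, and apply Proposition \ref{prop:weak Harnack} with $k_0=k$).
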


Unlike $p$-harmonic functions for $p>1$, $1$-minimizers are not always continuous
with any choice of representative, as demonstrated already by the
Heaviside function on the real line. However, the following semicontinuity holds.
Recall the definitions of the pointwise representatives $u^{\wedge}$ and $u^{\vee}$
from \eqref{eq:lower approximate limit} and \eqref{eq:upper approximate limit}.

\begin{theorem}\label{thm:superminimizers are lsc}
Let $u$ be a $1$-superminimizer in $\Om$. Then $u^{\wedge}\colon\Om\to (-\infty,\infty]$
is lower semicontinuous.
\end{theorem}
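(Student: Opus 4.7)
The plan is to apply Theorem~\ref{thm:weak Harnack}(a) to a suitable truncation of $u$. Fix $x_0\in\Om$ and $\alpha\in\R$ with $\alpha<u^{\wedge}(x_0)$; it will suffice to find a neighborhood of $x_0$ on which $u^{\wedge}>\alpha$. First I would choose $t\in(\alpha,u^{\wedge}(x_0))$, so that by the very definition of $u^{\wedge}$,
\[
\frac{\mu(\{u<t\}\cap B(x_0,r))}{\mu(B(x_0,r))}\longrightarrow 0\quad\text{as }r\to 0^+.
\]

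If one naively applied the weak Harnack directly to $-u$ (which is a $1$-subminimizer) at level $k=-t$, one would obtain a lower bound of the form $\essinf_{B(y,R/2)}u\ge t-C\vint{B(y,R)}(t-u)_+\,d\mu$, but the right-hand average need not vanish as $R\to 0^+$, because $u$ may be unbounded below on sets of small density. To get around this I would fix $N>t-\alpha$ and replace $u$ by the truncation $w:=\max\{u,t-N\}$, for which $(t-w)_+\le N\,\ch_{\{u<t\}}$. Moreover $\{w<s\}=\{u<s\}$ for every $s>t-N$, so that $w^{\wedge}(y)=u^{\wedge}(y)$ whenever both exceed $t-N$; in particular it is enough to show $w^{\wedge}(y)>\alpha$ for $y$ near $x_0$.

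The main obstacle is verifying that $w$ is again a $1$-superminimizer in $\Om$, so that $-w$ is a $1$-subminimizer and Theorem~\ref{thm:weak Harnack}(a) is available. This is the $p=1$ analogue of the classical fact that truncation from below preserves the superminimizer property, and I expect it to be provable by a direct test-function argument exploiting the measure-theoretic decomposition $\|Du\|=\|D\min\{u,c\}\|+\|D(u-c)_+\|$ from \cite[Lemma~3.5]{HKLS}, combined with the algebraic identity $\max\{u,c\}=c+(u-c)_+$; the delicate point is building a nonnegative compactly supported BV test function for $u$ from a given test function for $w$.

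Granting this truncation lemma, I would apply Theorem~\ref{thm:weak Harnack}(a) to $-w$ on balls $B(y,R)\Subset\Om$ with $R<\tfrac14\diam X$ and $r=R/2$, at level $k=-t$. Using the doubling property to pass from $B(y,R)$ with $d(y,x_0)<R$ to $B(x_0,2R)$, the resulting estimate takes the form
\[
\essinf_{B(y,R/2)}w\ge t-C'N\,\frac{\mu(\{u<t\}\cap B(x_0,2R))}{\mu(B(x_0,2R))},
\]
with $C'$ independent of $y$. Choosing $R$ small enough that the right-hand side exceeds $\alpha$, and noting that $w^{\wedge}(y)\ge\essinf_{B(y,R/2)}w>\alpha>t-N$, I conclude that $u^{\wedge}(y)=w^{\wedge}(y)>\alpha$ for every $y\in B(x_0,R)$, which gives the desired lower semicontinuity at $x_0$.
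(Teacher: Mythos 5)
Your overall strategy---applying Theorem~\ref{thm:weak Harnack}(a) to a $1$-subminimizer built from $u$---is the right one, and you correctly recognize that the naive estimate could be spoiled if $(t-u)_+$ were large on a set of small density. However, your remedy, truncating $u$ from below and asserting that $w=\max\{u,t-N\}$ is again a $1$-superminimizer, introduces a genuine gap: this truncation fact is stated but not proved, and it is not as routine as you suggest. The natural test-function substitution $\psi:=(t-N-u)_++\varphi$ does satisfy $u+\psi=w+\varphi$, but $\psi$ need not have compact support, so the argument you sketch via \cite[Lemma~3.5]{HKLS} does not obviously close. As it stands, the proposal rests on an unverified lemma that you yourself flag as ``the main obstacle.''

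The cleaner observation, which the paper uses, is that the concern you raise cannot actually occur for $1$-superminimizers: applying Theorem~\ref{thm:weak Harnack}(a) once to the subminimizer $-u$ on a fixed ball $B(x_0,2R)\Subset\Om$ (with, say, $k=0$; the right-hand average is finite because $u\in\BV_{\loc}(\Om)$) shows that $u\ge\beta$ a.e.\ in $B(x_0,R)$ for some finite $\beta$. With this local lower bound in hand, for any real $t\in[\beta,u^{\wedge}(x_0)]$ one applies the same weak Harnack inequality to the subminimizer $t-u$ with $k=0$, bounds $(t-u)_+\le\eps+(t-\beta)\ch_{\{u<t-\eps\}}$ a.e.\ on $B(x_0,R)$, lets $r\to0$, and then lets $\eps\to0$. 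This gives $u\ge t-C\eps$ a.e.\ on small balls around $x_0$, hence $u^{\wedge}\ge t-C\eps$ there, establishing lower semicontinuity with no truncation lemma and with the weak Harnack inequality applied only at $x_0$ rather than at every nearby point.
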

\begin{proof}
Let $x\in \Om$ and $R>0$ with $B(x,2R)\Subset \Om$.
By Theorem \ref{thm:weak Harnack}(a),
$u\ge \beta$ in $B(x,R)$ for some
$\beta\in\R$.
Thus $u^{\wedge}(x)\ge\beta>-\infty$.
Take a real number $t\in [\beta,u^{\wedge}(x)]$; note that we could have $u^{\wedge}(x)=\infty$.
Clearly $t-u$ is a $1$-subminimizer in $\Om$. Fix $\eps>0$.
By applying Theorem \ref{thm:weak Harnack}(a) with $k=0$, we get for any $0<r\le R$
\begin{align*}
&\esssup_{B(x,r/2)}(t-u)
\le 2^Q C_1\vint{B(x,r)}(t-u)_+\,d\mu\\
&\qquad= \frac{2^Q C_1}{\mu(B(x,r))}\Bigg(\int_{B(x,r)\cap \{t-\eps\le u<t\}}(t-u)\,d\mu
+\int_{B(x,r)\cap \{u< t-\eps\}}(t-u)
\,d\mu\Bigg)\\
&\qquad\le 2^Q C_1\eps+2^Q C_1(t-\beta)\frac{\mu(\{u<t-\eps\}\cap B(x,r))}
{\mu(B(x,r))}\\
&\qquad\to 2^Q C_1\eps\quad\textrm{as }r\to 0
\end{align*}
by the definition of the lower approximate limit $u^{\wedge}(x)$, and the fact that
$t\le u^{\wedge}(x)$.
Hence for small enough $r>0$, $u\ge t-2^Q C_1\eps-\eps$
in $B(x,r/2)$.
Thus $u^{\wedge}\ge t-2^Q C_1\eps-\eps$ in $B(x,r/2)$.
Now if $u^{\wedge}(x)\in\R$, we can choose $t=u^{\wedge}(x)$ to establish the lower
semicontinuity, whereas if $u^{\wedge}(x)=\infty$, we can choose $t\in\R$ arbitrarily
large to achieve the same.
\end{proof}

We conclude that for a $1$-minimizer $u$, $u^{\wedge}$ is lower semicontinuous and $u^{\vee}$
is upper semicontinuous.
From this we immediately get the following corollary, which was previously proved in
\cite[Theorem 4.1]{HKLS}.
We define the jump set $S_u$ of a $\BV$ function $u$ as the set where $u^{\wedge}<u^{\vee}$.

\begin{corollary}
Let $u$ be a $1$-minimizer in $\Om$. Then $u^{\vee}$ (alternatively
$u^{\wedge}$, or the \emph{precise representative} $\widetilde{u}:=(u^{\wedge}+u^{\vee})/2$)
is continuous at every $x\in\Om\setminus S_u$.
\end{corollary}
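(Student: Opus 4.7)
The plan is to read the corollary as an immediate consequence of Theorem~\ref{thm:superminimizers are lsc} applied to both $u$ and $-u$, combined with the trivial pointwise inequality $u^{\wedge}\le u^{\vee}$. Since a $1$-minimizer is both a $1$-superminimizer and a $1$-subminimizer, the theorem tells us that $u^{\wedge}$ is lower semicontinuous on $\Om$, and applying it to $-u$ (a $1$-superminimizer, with $(-u)^{\wedge}=-u^{\vee}$) gives that $u^{\vee}$ is upper semicontinuous on $\Om$.

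The remaining step is a standard sandwich argument. Fix $x\in\Om\setminus S_u$ and set $c:=u^{\wedge}(x)=u^{\vee}(x)\in[-\infty,\infty]$ (in fact $c\in\R$ by Theorem~\ref{thm:weak Harnack}, but this is not needed). For any sequence $y_n\to x$ in $\Om$, upper semicontinuity of $u^{\vee}$ yields $\limsup_{n\to\infty} u^{\vee}(y_n)\le u^{\vee}(x)=c$, while lower semicontinuity of $u^{\wedge}$ together with the pointwise bound $u^{\wedge}\le u^{\vee}$ yields $\liminf_{n\to\infty}u^{\vee}(y_n)\ge \liminf_{n\to\infty}u^{\wedge}(y_n)\ge u^{\wedge}(x)=c$. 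Hence $u^{\vee}(y_n)\to c = u^{\vee}(x)$, which proves the continuity of $u^{\vee}$ at $x$.

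The symmetric argument (swap the roles: squeeze $u^{\wedge}$ from above by $u^{\vee}$) gives the same conclusion for $u^{\wedge}$ at $x$, and then continuity of $\widetilde{u}=(u^{\wedge}+u^{\vee})/2$ follows as an average of two continuous functions. I do not expect any genuine obstacle: once Theorem~\ref{thm:superminimizers are lsc} is in hand, the corollary is purely formal, relying only on the fact that $x\notin S_u$ forces the upper and lower representatives to coincide and thus leaves no room between the one-sided semicontinuity estimates.
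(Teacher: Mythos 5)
Your proof is correct and follows exactly the route the paper intends: apply Theorem~\ref{thm:superminimizers are lsc} to $u$ and $-u$ to get lower semicontinuity of $u^{\wedge}$ and upper semicontinuity of $u^{\vee}$, then squeeze at points of $\Om\setminus S_u$ where $u^{\wedge}=u^{\vee}$. The paper leaves this sandwich step as "immediate," and your write-up simply makes it explicit.
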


For obstacle problems in the case $p>1$, it is well known that continuity of the obstacle
implies continuity of the solution, see \cite{Far} or \cite[Theorem 8.29]{BB}.
In the case $p=1$, the best we can hope for is
lower semicontinuity of $u^{\wedge}$ (which holds for superminimizers and thus for
solutions of obstacle problems) and the upper semicontinuity of $u^{\vee}$.
These we can indeed obtain.

\begin{theorem}\label{thm:semicontinuity of obstacle problems}
Let $u$ be a solution of the $\mathcal K_{\psi,f}(\Om)$-obstacle problem.
If $x\in\Om$ and $\psi^{\vee}(x)=\esslimsup_{y\to x}\psi(y)$ (with value in $\overline{\R}$),
then $u^{\vee}$ is ($\overline{\R}$-valued) upper semicontinuous at $x$.
If $\psi^{\vee}(x)<\infty$, then $u^{\vee}$ is real-valued upper semicontinuous at $x$.
\end{theorem}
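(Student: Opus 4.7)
The plan is to split into cases according to whether $\psi^{\vee}(x)$ is finite. Since $u\ge \psi$ almost everywhere in $\Om$, the definitions of the upper approximate limits immediately give $u^{\vee}(y)\ge \psi^{\vee}(y)$ at every $y\in\Om$. Consequently, if $\psi^{\vee}(x)=\infty$ then $u^{\vee}(x)=\infty$, and $\overline{\R}$-valued upper semicontinuity at $x$ is automatic. The substantive case is therefore $\psi^{\vee}(x)<\infty$; this will also produce the addendum that $u^{\vee}$ is real-valued at $x$, once we verify $u^{\vee}(x)<\infty$.

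The main tool is Theorem \ref{thm:weak Harnack}(b), which bounds $\esssup_{B(x,r)}u$ whenever we can find a constant $k$ with $\psi\le k$ a.e. on a neighborhood $B(x,R)$ of $x$. The hypothesis that $\psi^{\vee}(x)=\esslimsup_{y\to x}\psi(y)$ is precisely designed to supply such $k$: for any $k>\psi^{\vee}(x)$ there exists $R_0>0$ with $B(x,R_0)\Subset\Om$, $R_0<\tfrac14\diam X$, and $\psi\le k$ a.e. in $B(x,R_0)$.

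Now fix an arbitrary $\alpha>u^{\vee}(x)$ and choose $\eps>0$ with $u^{\vee}(x)+\eps<\alpha$. Set $k:=u^{\vee}(x)+\eps/2$; since $u^{\vee}(x)\ge \psi^{\vee}(x)$, this $k$ exceeds $\psi^{\vee}(x)$ and the preceding paragraph yields a workable $R_0$. I would then carry out a two-step bootstrap. First, applying Theorem \ref{thm:weak Harnack}(b) with this $k$ and with $R=R_0$, $r=R_0/2$, the finiteness of $\vint{B(x,R_0)}(u-k)_+\,d\mu$ (which holds because $u\in L^1_{\loc}(X)$) produces an essential ceiling $u\le M$ a.e. in $B(x,R_0/2)$ for some finite $M$; in particular $u^{\vee}(x)\le M<\infty$, giving the real-valued claim. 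Second, for every $0<R\le R_0/2$ one then has
\[
\vint{B(x,R)}(u-k)_+\,d\mu\le (M-k)_+\,\frac{\mu(\{u>k\}\cap B(x,R))}{\mu(B(x,R))},
\]
and since $k>u^{\vee}(x)$, the right-hand side tends to $0$ as $R\to 0$ by the very definition of the upper approximate limit. A second application of Theorem \ref{thm:weak Harnack}(b) on $B(x,R)$, now with $r=R/2$, therefore forces $\esssup_{B(x,R/2)}u\le k+\eps/2=u^{\vee}(x)+\eps<\alpha$ once $R$ is small enough. For such $R$ and any $y\in B(x,R/4)$ we have $u^{\vee}(y)\le \esssup_{B(y,R/4)}u\le \esssup_{B(x,R/2)}u<\alpha$, and since $\alpha>u^{\vee}(x)$ was arbitrary this yields $\limsup_{y\to x}u^{\vee}(y)\le u^{\vee}(x)$.

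The main obstacle is the need for the bootstrap itself: a single application of the weak Harnack inequality is not enough, because $\vint{B(x,R)}(u-k)_+\,d\mu$ cannot be made small as $R\to 0$ without first controlling $u$ from above on a fixed neighborhood of $x$. Only after the first application furnishes the $L^{\infty}$ ceiling $M$ does the measure-theoretic smallness of $\{u>k\}\cap B(x,R)$, guaranteed by $k>u^{\vee}(x)$, translate into $L^1$ smallness of $(u-k)_+$; the second application then closes the argument. A minor bookkeeping point is that the whole scheme rests on picking $k$ strictly between $u^{\vee}(x)$ and $\alpha$, which is possible precisely because $u^{\vee}(x)\ge \psi^{\vee}(x)$ forces $k>\psi^{\vee}(x)$ automatically.
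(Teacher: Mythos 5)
Your argument is essentially the paper's: a two-step bootstrap with the weak Harnack inequality, first to get an $L^\infty$ ceiling $M$ on a fixed ball, then to shrink $\vint{B(x,r)}(u-k)_+\,d\mu$ using the density definition of $u^\vee(x)$. However, as written there is a small circularity that needs untangling. You set $k:=u^{\vee}(x)+\eps/2$ and then invoke the first application of Theorem \ref{thm:weak Harnack}(b) to conclude $u^{\vee}(x)\le M<\infty$; but choosing such a finite $k$ (and fixing any $\alpha>u^{\vee}(x)$) already presupposes $u^{\vee}(x)<\infty$, which is precisely what that first application is supposed to establish. If $u^{\vee}(x)=\infty$ your opening step is vacuous and the finiteness never gets proved. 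The paper avoids this by running the first application with a generic $k_0>\psi^{\vee}(x)$ (any fixed finite upper bound for $\psi$ near $x$, supplied by the $\esslimsup$ hypothesis), which gives $\esssup_{B(x,R_0/2)}u\le M<\infty$ and hence $u^{\vee}(x)<\infty$ without referring to $u^{\vee}(x)$ at all; only then is $k=u^{\vee}(x)+\eps$ introduced for the second application. You clearly understand the need for the bootstrap, so the fix is just to reorder: first apply the Harnack inequality with an arbitrary admissible level, and only afterwards bring $u^{\vee}(x)$ into the choice of level.

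One further small omission: the ``real-valued'' conclusion also requires $u^{\vee}(x)>-\infty$ (and $u^{\vee}>-\infty$ near $x$), which you do not address. This follows from Theorem \ref{thm:superminimizers are lsc}, since $u$ is in particular a $1$-superminimizer and thus $u^{\vee}\ge u^{\wedge}>-\infty$ in $\Om$; the paper records this explicitly and you should too.
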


Here $\esslimsup_{y\to x}\psi(y):=\lim_{r\to 0}\esssup_{B(x,r)}\psi$.
In particular, it is enough if $\psi$ is continuous (as an $\overline{\R}$-valued function)
at $x$.
In Example \ref{ex:counterexample} we will see that $u^{\vee}$ is not always upper semicontinuous.

\begin{proof}
Assume first that $\psi^{\vee}(x)=\infty$.
Then since $u\ge \psi$, clearly $u^{\vee}(x)\ge \psi^{\vee}(x)=\infty$,
guaranteeing $\overline{\R}$-valued upper semicontinuity at $x$.

From now on, assume $\psi^{\vee}(x)<\infty$.
By the fact that $\psi^{\vee}(x)=\esslimsup_{y\to x}\psi(y)$,
there exist $k_0\in\R$ and $R_0>0$ such that $\psi\le k_0$ a.e. in $B(x,R_0)\Subset \Om$,
and so
by  Theorem \ref{thm:weak Harnack}(b),
\[
\esssup_{B(x,R_0/2)}u\le 2^{Q} C_1 \vint{B(x,R_0)}(u-k_0)_+\,d\mu+k_0=:M<\infty.
\]
We conclude that $u^{\vee}(x)<\infty$, and since also $u^{\wedge}(x)>-\infty$ by
Theorem \ref{thm:superminimizers are lsc}, we have $u^{\vee}(x)\in \R$.

Fix $\eps>0$. Since $u\ge \psi$ in $\Om$, we have $u^{\vee}(x)\ge \psi^{\vee}(x)$.
Let $k:=u^{\vee}(x)+\eps\ge \psi^{\vee}(x)+\eps$.
If $\psi^{\vee}(x)\in\R$ (respectively, $\psi^{\vee}(x)=-\infty$),
by the fact that $\psi^{\vee}(x)=\esslimsup_{y\to x}\psi(y)$ we find
$R\in (0,R_0/2]$ such that
$\psi\le \psi^{\vee}(x)+\eps\le k$ a.e. in $B(x,R)$
(respectively, $\psi \le k$ in $B(x,R)$).
Thus we can apply Theorem \ref{thm:weak Harnack}(b) to get for any $0<r\le R$
\[
\esssup_{B(x,r/2)}u\le  2^{Q}C_1
\vint{B(x,r)}(u-u^{\vee}(x)-\eps)_+\,d\mu+u^{\vee}(x)+\eps.
\]
Here
\[
\vint{B(x,r)}(u-u^{\vee}(x)-\eps)_+\,d\mu
\le (M-u^{\vee}(x))\frac{\mu(\{u>u^{\vee}(x)+\eps\}\cap B(x,r))}{\mu(B(x,r))}
\to 0
\]
as $r\to 0$ by the definition of the upper approximate limit $u^{\vee}(x)$.
Thus for sufficiently small $r>0$,
\[
\esssup_{B(x,r/2)}u\le u^{\vee}(x)+2\eps
\]
and thus $u^{\vee}\le u^{\vee}(x)+2\eps$ in $B(x,r/2)$. We conclude that
\[
\limsup_{y\to x}u^{\vee}(y)\le u^{\vee}(x)<\infty,
\]
and since
$u^{\vee}\ge u^{\wedge}>-\infty$
in $\Om$ by Theorem \ref{thm:superminimizers are lsc}, we have
established real-valued upper semicontinuity at $x$.
\end{proof}

For \emph{general} $\BV$ functions we have the following result,
which follows from \cite[Theorem 1.1]{LaSh},
and was proved earlier in the Euclidean setting in \cite[Theorem 2.5]{CDLP}.

\begin{proposition}\label{prop:quasisemicontinuity of BV}
Let $u\in\BV(X)$ and $\eps>0$. Then there exists an open set $G\subset X$ such that
$\capa_1(G)<\eps$ and
$u^{\wedge}|_{X\setminus G}$ is lower semicontinuous.
\end{proposition}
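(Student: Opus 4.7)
The plan is to invoke \cite[Theorem 1.1]{LaSh}, which is precisely the quasi-semicontinuity statement claimed here: for every $u\in\BV(X)$ and every $\eps>0$ there is an open set $G\subset X$ with $\capa_1(G)<\eps$ such that $u^{\wedge}|_{X\setminus G}$ is lower semicontinuous. Thus the proof reduces to a direct appeal to the reference.

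For the reader's orientation, the underlying strategy proceeds via the superlevel sets. By the coarea formula \eqref{eq:coarea}, $E_q:=\{u>q\}$ has finite perimeter for $q$ in a cocountable subset of $\R$, and in particular along a countable dense sequence $\{q_k\}\subset\Q$. Working from the definition \eqref{eq:lower approximate limit} of $u^{\wedge}$, one verifies the sandwich
\[
\{u^{\wedge}>q\}\subseteq I_{E_q}\subseteq \{u^{\wedge}\ge q\},
\]
so to make $u^{\wedge}|_{X\setminus G}$ lower semicontinuous it suffices to arrange that $I_{E_{q_k}}\cap (X\setminus G)$ is relatively open for every $k$. Thus one seeks, for each $k$, an open set $V_k\supseteq I_{E_{q_k}}$ with $\capa_1(V_k\setminus I_{E_{q_k}})<2^{-k}\eps$; taking $G$ to be an open cover of $\bigcup_k (V_k\setminus I_{E_{q_k}})$, furnished by the outer regularity of $\capa_1$, then yields the conclusion with capacity bound $\eps$ (up to a cosmetic constant).

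The main obstacle, and what makes the result nontrivial, is producing the open enlargements $V_k$ with a summable capacity defect. This rests on the fact that in the present setting the essential boundary $\partial^* E_{q_k}$ has finite codimension-one Hausdorff measure controlled by $P(E_{q_k},X)$, combined with a boxing-type covering of $\partial^* E_{q_k}$ obtained from the relative isoperimetric inequality \eqref{eq:relative isoperimetric inequality}. Converting the resulting $\mathcal H$-estimate into a capacity estimate uses the identification \eqref{eq:null sets of Hausdorff measure and capacity} of $\mathcal H$-null and $\capa_1$-null sets, together with its quantitative refinement implicit in the boxing construction. Carrying out this covering with the correct quantitative capacity bound and then summing the defects geometrically is the substance of \cite[Theorem 1.1]{LaSh}.
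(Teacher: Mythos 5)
Your proof is correct and takes exactly the same route as the paper, which simply cites \cite[Theorem 1.1]{LaSh} (and notes the earlier Euclidean version in \cite[Theorem 2.5]{CDLP}) without reproving the result. The expository sketch you append is a fair summary of the underlying strategy in \cite{LaSh}, though it is not needed here; note only that the coarea formula \eqref{eq:coarea} gives finite perimeter of $\{u>q\}$ for a.e.\ $q$, not for cocountably many $q$, which is still enough to pick the dense sequence $\{q_k\}$.
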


This \emph{quasi-semicontinuity} is to be compared with the quasicontinuity of
Newton-Sobolev functions: if $u\in N^{1,p}(X)$ for $1\le p<\infty$ and $\eps>0$,
then there exists an open set $G\subset X$ such that
$\capa_p(G)<\eps$ and
$u|_{X\setminus G}$ is continuous;
see \cite[Theorem 1.1]{BBS} or \cite[Theorem 5.29]{BB}.

\section{The $1$-fine topology}\label{sec:the fine topology}

In this section we consider some basic properties of the \emph{$1$-fine topology}.
The following definition is from \cite{L}.

\begin{definition}\label{def:1 fine topology}
	We say that $A\subset X$ is $1$-thin at the point $x\in X$ if
	\[
	\lim_{r\to 0}r\frac{\rcapa_1(A\cap B(x,r),B(x,2r))}{\mu(B(x,r))}=0.
	\]
	If $A$ is not $1$-thin at $x$, we say that it is $1$-thick.
	We also say that a set $U\subset X$ is $1$-finely open if $X\setminus U$ is $1$-thin at every $x\in U$. Then we define the $1$-fine topology as the collection of $1$-finely open subsets
	of $X$.
\end{definition}

See \cite[Lemma 4.2]{L}
for a proof of the fact that the $1$-fine topology is indeed a topology.

We record the following fact given in \cite[Lemma 11.22]{BB}, and use it to prove two
lemmas that will be useful later.
\begin{lemma}\label{lem:capacity wrt different balls}
Let $x\in X$, $r>0$, and $A\subset B(x,r)$. Then for every $1<s<t$
with $tr<\frac 14 \diam X$, we have
\[
\rcapa_1(A,B(x,tr))\le \rcapa_1(A,B(x,sr))\le C_S\left(1+\frac{t}{s-1}\right)\rcapa_1(A,B(x,tr)),
\]
where $C_S$ is the constant in the Sobolev inequality \eqref{eq:sobolev inequality}.
\end{lemma}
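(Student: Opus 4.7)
The plan is to treat the two inequalities separately. The left-hand inequality reduces to monotonicity of the variational capacity in its defining domain. If $u\in N^{1,1}(X)$ is admissible for $\rcapa_1(A,B(x,sr))$, that is $u\ge 1$ on $A$ and $u=0$ on $X\setminus B(x,sr)$, then because $B(x,sr)\subset B(x,tr)$ we also have $u=0$ on $X\setminus B(x,tr)$; hence $u$ is admissible for $\rcapa_1(A,B(x,tr))$. Taking the infimum over such $u$ yields the first inequality.

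For the right-hand inequality, the strategy is to localize a nearly optimal test function by multiplying it with a cutoff. Fix $\eps>0$ and choose $u\in N^{1,1}(X)$ admissible for $\rcapa_1(A,B(x,tr))$ with $\int_X g_u\,d\mu\le\rcapa_1(A,B(x,tr))+\eps$; by truncating we may assume $0\le u\le 1$, which does not increase the energy. Build a Lipschitz cutoff $\eta$ with $0\le\eta\le 1$, $\eta=1$ on $B(x,r)$, $\eta=0$ on $X\setminus B(x,sr)$, and Lipschitz constant at most $1/((s-1)r)$. Since $A\subset B(x,r)$ where $\eta=1$, the product $v:=\eta u$ lies in $N^{1,1}(X)$ and satisfies $v\ge 1$ on $A$ and $v=0$ on $X\setminus B(x,sr)$, so it is admissible for $\rcapa_1(A,B(x,sr))$.

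By the Leibniz rule for upper gradients,
\[
g_v\le \eta\, g_u + u\, g_\eta \le g_u + \frac{u}{(s-1)r}\ch_{B(x,sr)}.
\]
Applying the Sobolev inequality \eqref{eq:sobolev inequality} to $u$ (valid because $u\in N^{1,1}(X)$ vanishes outside $B(x,tr)$ and $tr<\tfrac14\diam X$) together with H\"older's inequality in the form $\vint_{B(x,tr)}u\,d\mu\le (\vint_{B(x,tr)}u^{Q/(Q-1)}\,d\mu)^{(Q-1)/Q}$ yields $\int_{B(x,tr)}u\,d\mu\le C_S\, tr\int_X g_u\,d\mu$. Combining,
\[
\rcapa_1(A,B(x,sr))\le\int_X g_v\,d\mu\le \Bigl(1+\frac{C_S\, t}{s-1}\Bigr)\int_X g_u\,d\mu\le C_S\Bigl(1+\frac{t}{s-1}\Bigr)\bigl(\rcapa_1(A,B(x,tr))+\eps\bigr),
\]
where the last step absorbs the leading $1$ into $C_S\ge 1$. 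Letting $\eps\to 0$ gives the claimed bound.

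The only mildly delicate point is the chain Leibniz rule plus Sobolev plus H\"older, but each ingredient is standard in this setting; the hypothesis $tr<\tfrac14\diam X$ is exactly what makes the Sobolev inequality available, and the assumption $A\subset B(x,r)$ is what guarantees that the cutoff $\eta$ does not destroy admissibility on $A$.
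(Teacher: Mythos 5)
Your proof is correct. Note that the paper does not actually prove this lemma but cites it from \cite[Lemma 11.22]{BB}; your argument (monotonicity of admissible functions for the easy inequality, a Lipschitz cutoff combined with the Leibniz rule, the Sobolev inequality \eqref{eq:sobolev inequality}, and H\"older for the harder one, then absorbing the additive $1$ using $C_S\ge 1$) is precisely the standard proof of that cited result.
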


\begin{lemma}\label{lem:discrete thinness condition}
Let $A\subset X$, $x\in X$, $R>0$, and $M>1$ such that
\[
\lim_{i\to \infty}M^{-i}R\frac{\rcapa_1(A\cap B(x,M^{-i}R),B(x,2M^{-i}R))}
{\mu(B(x,M^{-i}R))}=0.
\]
Then
\[
\lim_{r\to 0}r\frac{\rcapa_1(A\cap B(x,r),B(x,2r))}{\mu(B(x,r))}=0.
\]
\end{lemma}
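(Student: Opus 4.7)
The plan is to compare the continuous quantity $r \mapsto r\,\rcapa_1(A\cap B(x,r),B(x,2r))/\mu(B(x,r))$ to its discrete analogue along the sequence $r_i:=M^{-i}R$, losing only multiplicative constants that depend on $M$, $C_d$, and $C_S$. The main step will be Lemma~\ref{lem:capacity wrt different balls}, which lets us trade the outer ball $B(x,2r)$ for $B(x,2r_i)$ at the cost of a factor involving the ratio of the radii.

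First, given a small $r>0$, I pick the integer $i\in\N$ with $r_{i+1}\le r\le r_i$, so that in particular $r_i/r\le M$. Applying Lemma~\ref{lem:capacity wrt different balls} to $A\cap B(x,r)\subset B(x,r)$ with $s=2$ and $t=2r_i/r\ge 2$ (the case $r=r_i$ being trivial, and the condition $tr<\tfrac14\diam X$ being automatic for large $i$), I obtain
\[
\rcapa_1(A\cap B(x,r),B(x,2r))
\le C_S\!\left(1+\frac{2r_i}{r}\right)\rcapa_1(A\cap B(x,r),B(x,2r_i))
\le C_S(1+2M)\,\rcapa_1(A\cap B(x,r),B(x,2r_i)).
\]
Since $A\cap B(x,r)\subset A\cap B(x,r_i)$, the monotonicity of $\rcapa_1(\cdot,B(x,2r_i))$ gives
\[
\rcapa_1(A\cap B(x,r),B(x,2r))
\le C_S(1+2M)\,\rcapa_1(A\cap B(x,r_i),B(x,2r_i)).
\]

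Next I handle the measure factor. Since $r_i\le Mr$, the doubling condition yields a constant $C_M=C_M(C_d,M)$ with $\mu(B(x,r_i))\le C_M\mu(B(x,r))$. Multiplying through by $r/\mu(B(x,r))$ and using $r\le r_i$ to bound the outer factor, I arrive at
\[
r\,\frac{\rcapa_1(A\cap B(x,r),B(x,2r))}{\mu(B(x,r))}
\le C_S(1+2M)\,C_M\cdot r_i\,\frac{\rcapa_1(A\cap B(x,r_i),B(x,2r_i))}{\mu(B(x,r_i))}.
\]
As $r\to 0$ we have $i\to\infty$, and the right-hand side tends to $0$ by the hypothesis, yielding the claim.

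The only real subtlety is ensuring that Lemma~\ref{lem:capacity wrt different balls} can be invoked uniformly: its inner ball is exactly the one containing the set $A\cap B(x,r)$, the enlargement ratio $t=2r_i/r$ is uniformly bounded in $[2,2M]$, and the diameter restriction $tr=2r_i<\tfrac14\diam X$ holds for all sufficiently small $r$ since $r_i\to 0$. Once this is observed, the argument reduces to monotonicity and doubling, so no further obstacles remain.
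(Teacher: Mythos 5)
Your proof is correct and follows essentially the same route as the paper: pick $i$ with $M^{-i-1}R\le r\le M^{-i}R$, use Lemma~\ref{lem:capacity wrt different balls} to switch the outer ball to $B(x,2M^{-i}R)$, enlarge the inner set by monotonicity, and absorb the radius and measure ratios via $r\le M^{-i}R$ and doubling. The constant $C_M$ you obtain matches the paper's $C_d^{\lceil\log_2 M\rceil}$.
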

\begin{proof}
If $i\in\N$ such that $2M^{-i}R<\tfrac 14 \diam X$ and $r\in [M^{-i-1}R,M^{-i}R]$,
we have by Lemma
\ref{lem:capacity wrt different balls}
\begin{align*}
&r\frac{\rcapa_1(A\cap B(x,r),B(x,2r))}{\mu(B(x,r))}\\
&\qquad\le C_S(1+2M) r\frac{\rcapa_1(A\cap B(x,r),B(x,2M^{-i}R))}{\mu(B(x,r))}\\
&\qquad\le C_S(1+2M) C_d^{\lceil\log_2 M \rceil} M^{-i}R\frac{\rcapa_1(A\cap B(x,M^{-i}R),B(x,2M^{-i}R))}{\mu(B(x,M^{-i}R))},
\end{align*}
where $\lceil a\rceil$ denotes the smallest integer at least $a$. From this the claim follows.
\end{proof}

The following is a standard result in the case $p>1$,
see e.g. \cite[Lemma 12.11]{HKM} or \cite[Lemma 4.7]{BBL-WC},
and we prove it similarly for $p=1$.
\begin{lemma}\label{lem:open modification of thin sets}
	Let $A\subset X$ and $x\in X\setminus A$.
	If $A$ is $1$-thin at $x$, there exists an open set $W\supset A$
	that is $1$-thin at $x$.
\end{lemma}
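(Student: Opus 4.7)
Since $1$-thinness at $x$ is detected by arbitrarily small balls around $x$, it suffices to produce an open $W_0$ containing $A \cap B(x,R)$ for some small $R>0$ that is $1$-thin at $x$; one then sets $W := W_0 \cup (X \setminus \overline{B(x, R/2)})$, which is open and contains $A$, and for $r \le R/2$ we have $W \cap B(x, r) = W_0 \cap B(x, r)$, so the thinness at $x$ is unaffected by the outer piece.

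Set $r_i := 2^{-i} R$. Because $x \notin A$, I can decompose $A \cap B(x,R) = \bigcup_{i \ge 0} A_i$ with $A_i := A \cap (B(x, r_i) \setminus B(x, r_{i+1}))$. Each $A_i$ sits compactly in the open annulus $\Lambda_i := B(x, \tfrac{5}{4} r_i) \setminus \overline{B(x, r_{i+2})}$, which is itself compactly contained in $B(x, 2 r_i)$, and the family $\{\Lambda_i\}$ has bounded overlap (each point lies in at most finitely many $\Lambda_i$). By the outer capacity property of $\rcapa_1$, pick an open $V_i$ with $A_i \subset V_i \subset \Lambda_i$ and
\[
\rcapa_1(V_i, B(x, 2 r_i)) \le \rcapa_1(A_i, B(x, 2 r_i)) + \eps_i,
\]
the error $\eps_i > 0$ being chosen so that $r_i \eps_i/\mu(B(x, r_i)) \le 2^{-i}$. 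Set $W_0 := \bigcup_i V_i$, which is open and contains $A \cap B(x, R)$.

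To verify $1$-thinness of $W_0$ at $x$, I would invoke Lemma \ref{lem:discrete thinness condition} to reduce the task to showing that $r_k \rcapa_1(W_0 \cap B(x, r_k), B(x, 2 r_k))/\mu(B(x, r_k)) \to 0$ as $k \to \infty$. The annular placement yields $V_i \cap B(x, r_k) = \emptyset$ for $i \le k-2$ and $V_i \subset B(x, r_k)$ for $i \ge k+2$, so only the three boundary indices $i \in \{k-1, k, k+1\}$ and the tail $\bigcup_{i \ge k+2} V_i$ contribute. The boundary terms are dominated, using subadditivity, Lemma \ref{lem:capacity wrt different balls}, and the defining estimate on each $V_i$, by a uniform multiple of $\rcapa_1(A \cap B(x, r_{k-1}), B(x, 2 r_{k-1}))$ plus $\eps_{k-1} + \eps_k + \eps_{k+1}$; after scaling by $r_k/\mu(B(x, r_k))$ these vanish in the limit by thinness of $A$.

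The hard part is the tail $\rcapa_1\bigl(\bigcup_{i \ge k+2} V_i, B(x, 2 r_k)\bigr)$: the naive bound $\sum_{i \ge k+2} \rcapa_1(V_i, B(x, 2 r_i))$ is too weak, because $a_i\to 0$ alone does not force $\sum_{i \ge k+2} 2^{i-k} a_i \to 0$. Following the $p>1$ strategy of \cite{BBL-WC} and \cite{HKM}, the fix is to exploit bounded overlap of $\{\Lambda_i\}$ together with suitable Lipschitz cutoffs in the annular shells in order to glue near-extremal test functions for the individual $V_i$'s into a single admissible test function for $\bigcup_{i \ge k+2} V_i \subset B(x, \tfrac{5}{16} r_k)$, whose gradient integral is dominated by $\rcapa_1(A \cap B(x, r_k), B(x, 2 r_k))$ up to a uniform constant plus the summable tail $\sum_{i \ge k+2} \eps_i$. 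Passing to $k \to \infty$ then yields the required vanishing, and hence $1$-thinness of $W$ at $x$.
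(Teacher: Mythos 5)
Your plan runs into a genuine gap that you yourself flag but do not close. By decomposing $A\cap B(x,R)$ into annular pieces $A_i$ and approximating each one separately by $V_i$, you create exactly the tail $\bigcup_{i\ge k+2}V_i$ whose capacity you cannot control: the naive subadditivity bound gives $\sum_{i\ge k+2}\rcapa_1(V_i,B(x,2r_i))$, and after rescaling by $r_k/\mu(B(x,r_k))$ and using doubling this becomes (up to constants) $\sum_{i\ge k+2}2^{i-k}\delta_i$ where $\delta_i\to 0$ is the thinness sequence -- a sum that need not vanish. Your proposed remedy of gluing near-extremal test functions via Lipschitz cutoffs and bounded overlap of the annuli does not obviously help, because for $p=1$ the quantity being controlled is the $L^1$-norm of the gradient; even if at each point only boundedly many $g_{\varphi_i}$ are nonzero, the \emph{integral} over the ball is still essentially $\sum_i\int g_{\varphi_i}$, so no summability is gained. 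You would need to supply a genuinely new mechanism here, and the proposal does not.

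The paper sidesteps the tail entirely by abandoning the annular decomposition. It takes, for each $i$, a \emph{single} open set $W_i\supset A\cap\overline{B_i}$ (approximating all of $A$ near $x$, not just one annular slice) with $\rcapa_1(W_i,2B_i)\le\rcapa_1(A\cap\overline{B_i},2B_i)+o(1)$ after normalization, and then defines
\[
W:=(X\setminus\overline{B_1})\cup(W_1\setminus\overline{B_2})\cup(W_1\cap W_2\setminus\overline{B_3})\cup\cdots.
\]
This telescoping-intersection construction is open, contains $A$ (since $x\notin A$ forces every point of $A$ to escape some $\overline{B_{j+1}}$), and crucially satisfies $W\cap B_i\subset W_i$ for every $i$. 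Thus $\rcapa_1(W\cap B_i,2B_i)$ is estimated by a \emph{single} term $\rcapa_1(W_i,2B_i)$ rather than an infinite sum, and Lemmas \ref{lem:capacity wrt different balls} and \ref{lem:discrete thinness condition} finish the argument. If you want to salvage your annular route, you would in effect have to reproduce this nesting idea (or find some other way to collapse the tail to one capacity term), since summing over the annuli is what breaks.
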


\begin{proof}
	Let $B_i:=B(x,2^{-i})$, $i\in\N$.
	By Lemma \ref{lem:capacity wrt different balls}, if $2^{-i+2}<\tfrac 14 \diam X$, then
	\[
	\rcapa_1(A\cap\overline{B_i},2B_i)\le 5C_S\rcapa_1(A\cap\overline{B_i},4B_i)
	\le 5C_S\rcapa_1(A\cap 2B_i,4B_i).
	\]
	By the fact that $\rcapa_1$ is an outer capacity, for each $i\in\N$ we find an open set
	$W_i\supset A\cap \overline{B_i}$ such that
	\[
	2^{-i}\frac{\rcapa_1(W_i,2B_i)}{\mu(B_i)}
	\le 2^{-i}\frac{\rcapa_1(A\cap\overline{B_i},2B_i)}{\mu(B_i)}+1/i.
	\]
	Let
	\[
	W:=(X\setminus \overline{B_1})\cup (W_1\setminus \overline{B_2}) 
	\cup (W_1\cap W_2\setminus \overline{B_3})
	\cup (W_1\cap W_2\cap W_3\setminus \overline{B_4})
	\cup\ldots
	\]
	Now $W$ is open and $A\subset W$, and $W\cap B_i\subset W_i$ for all $i\in\N$.
	Thus by combining the two inequalities above, we get
	for any $i\in\N$ with $2^{-i+2}<\tfrac 14 \diam X$,
	\begin{align*}
	2^{-i}\frac{\rcapa_1(W\cap B_i,2B_i)}{\mu(B_i)}
	& \le 2^{-i}\frac{\rcapa_1(W_i,2B_i)}{\mu(B_i)}\\
	&\le 2^{-i}\frac{\rcapa_1(A\cap\overline{B_i},2B_i)}{\mu(B_i)}+1/i\\
	&\le 5C_S C_d 2^{-i+1}\frac{\rcapa_1(A\cap 2B_i,4B_i)}{\mu(2B_i)}+1/i\\
	&\to 0\quad\textrm{as }i\to\infty
	\end{align*}
	by the fact that $A$ is $1$-thin at $x$.
	By Lemma \ref{lem:discrete thinness condition} we conclude that $W$ is also $1$-thin at $x$.
\end{proof}

The analog of the next proposition is again known for $p>1$,
see \cite[Proposition 1.3]{BBL-WC}, but in this case our proof will be rather different.
In the case $p>1$ the proof relies on the theory of $p$-harmonic functions,
but we are able to use a more direct argument that relies on the relative isoperimetric inequality.

\begin{proposition}\label{prop:positive capacity implies thickness}
Let $x\in X$ with $\capa_1(\{x\})>0$. Then $\{x\}$ is $1$-thick at $x$.
\end{proposition}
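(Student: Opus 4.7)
The plan is to prove the stronger lower bound
\[
\rcapa_1(\{x\},B(x,2r)) \ge c\,\mu(B(x,r))/r
\]
for a constant $c>0$ depending only on $C_d,C_P,\lambda,C_S$ and all sufficiently small $r$; since then $r\rcapa_1/\mu(B(x,r))\ge c$, this gives $1$-thickness immediately.

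Take any $u\in N^{1,1}(X)$ admissible for the variational capacity and truncate so that $0\le u\le 1$, $u(x)=1$, and $u=0$ on $X\setminus B(x,2r)$. From $\capa_1(\{x\})>0$ and \eqref{eq:null sets of Hausdorff measure and capacity} one has $\mathcal{H}(\{x\})>0$, which (taking the singleton cover) forces the uniform bound $\mu(B(x,s))/s\ge c_0$ for all small $s$; moreover, since $\{x\}$ cannot sit inside any set of $1$-capacity zero, $1$-quasicontinuity of Newton--Sobolev functions yields $\vint_{B(x,s)}u\,d\mu\to u(x)=1$ as $s\to 0$. The coarea formula then gives
\[
\int_X g_u\,d\mu \ge \Vert Du\Vert(X) = \int_0^1 P(E_t,X)\,dt, \qquad E_t:=\{u>t\},
\]
and for each $t\in(0,1)$, $E_t\subset B(x,2r)$ up to a null set and $x\in I_{E_t}$.

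The core step is a uniform lower bound $P(E_t,X)\ge c\,\mu(B(x,r))/r$ for every $t\in(0,1)$, obtained by a dichotomy. If $\mu(E_t)\ge \mu(B(x,r))/2$, the zero-boundary isoperimetric inequality \eqref{eq:isop inequality with zero boundary values} applied to $E_t\subset B(x,2r)$ gives $P(E_t,X)\ge \mu(E_t)/(2C_Sr)\ge \mu(B(x,r))/(4C_Sr)$. Otherwise $\mu(E_t)<\mu(B(x,r))/2$; since $x\in I_{E_t}$ the density $\mu(B(x,s)\cap E_t)/\mu(B(x,s))$ goes from near $1$ at small $s$ to below $1/2$ at $s=r$, so there is a transition scale $s^*(t)\le r$ at which this density lies in $[1/4,3/4]$. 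At $s^*$ both $\mu(B(x,s^*)\cap E_t)$ and $\mu(B(x,s^*)\setminus E_t)$ are at least $\mu(B(x,s^*))/4$, and the relative isoperimetric inequality \eqref{eq:relative isoperimetric inequality} yields $P(E_t,X)\ge \mu(B(x,s^*))/(Cs^*)$; we then compare $\mu(B(x,s^*))/s^*$ to $\mu(B(x,r))/r$ via doubling together with the uniform bound $\mu(B(x,s))/s\ge c_0$. Integrating $P(E_t,X)\ge c\,\mu(B(x,r))/r$ in $t$ and taking the infimum over admissible $u$ produces the desired capacity bound.

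The main technical obstacle is the transition-scale subcase when $s^*\ll r$: the raw bound $\mu(B(x,s^*))/s^*\ge c_0$ may be much smaller than $\mu(B(x,r))/r$ if the latter is large. The resolution uses that $\capa_1(\{x\})>0$ forces the local measure at $x$ to be essentially at most codimension one, so that together with doubling one obtains $\mu(B(x,s))/s\gtrsim \mu(B(x,r))/r$ for all $s\le r$. As a cleaner fallback, the atom case $\mu(\{x\})>0$ is handled directly by the Poincar\'e inequality \eqref{eq:Poincare ineq for BV with zero boundary values}:
\[
\int_X g_u\,d\mu \ge \int_X u\,d\mu/(4C_Sr) \ge u(x)\mu(\{x\})/(4C_Sr) = \mu(\{x\})/(4C_Sr),
\]
which, combined with $\mu(B(x,r))\to \mu(\{x\})$, immediately gives $r\rcapa_1/\mu(B(x,r))\ge c>0$.
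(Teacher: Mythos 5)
Your overall strategy — reduce to a lower bound on $P(E_t,X)$ for the superlevel sets $E_t$ of an admissible function, then use a density dichotomy and the relative isoperimetric inequality at the transition scale — is essentially the content of the paper's Lemma \ref{lem:capacity estimate}, and your reduction of $\capa_1(\{x\})>0$ to $\liminf_{s\to0}\mu(B(x,s))/s\ge \mathcal H(\{x\})>0$ via the singleton cover and \eqref{eq:null sets of Hausdorff measure and capacity} is correct. However, there is a genuine gap exactly at what you flag as the ``main technical obstacle,'' and the claimed resolution is not valid.

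You assert that $\capa_1(\{x\})>0$ together with doubling forces $\mu(B(x,s))/s\gtrsim \mu(B(x,r))/r$ uniformly for all $s\le r$. This does not follow. What you actually have is a two-sided constraint of very different strengths: a uniform lower bound $\mu(B(x,s))/s\ge c_0$ for all small $s$ (from $\mathcal H(\{x\})>0$), and, from doubling alone, only $\mu(B(x,r))/r\le C_d^2(r/s)^{Q-1}\,\mu(B(x,s))/s$, which blows up as $r/s\to\infty$ when $Q>1$. The quantity $\mu(B(x,r))/r$ is neither bounded above nor essentially nonincreasing as $r\to 0$: indeed Example \ref{ex:singular points} ($\mu=|x|^a\mathcal L^n$, $a\in(-n,-n+1)$) has $\capa_1(\{0\})>0$ while $\mu(B(0,r))/r\to\infty$, so ``essentially at most codimension one'' is already false, and nothing in the hypotheses prevents $\mu(B(x,r))/r$ from oscillating between its $\liminf$ $c_0$ and much larger values along sequences $r\to0$. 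Consequently the uniform lower bound $\rcapa_1(\{x\},B(x,2r))\ge c\,\mu(B(x,r))/r$ you aim for is stronger than what is true, and the argument at the transition-scale subcase does not close.

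The paper's proof handles precisely this point differently: it does \emph{not} attempt a uniform bound. After the density dichotomy produces, for each small $r$, a scale $s_r\in(0,r]$ with $\rcapa_1(\{x\},B(x,2r))\ge \mu(B(x,s_r))/(C_2 s_r)$ (Lemma \ref{lem:capacity estimate}), it applies the elementary real-line Lemma \ref{lem:real line lemma} to $f(r)=r/\mu(B(x,r))$ to conclude $\limsup_{r\to 0} f(r)/f(s_r)\ge 1$. This yields a $\limsup$ lower bound on $r\rcapa_1/\mu(B(x,r))$, which is all $1$-thickness requires (Definition \ref{def:1 fine topology} only needs the $\lim$ not to be $0$). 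Your dichotomy and isoperimetric computation would mesh with that lemma and complete the proof; but as written, the step asserting $\mu(B(x,s))/s\gtrsim \mu(B(x,r))/r$ for all $s\le r$ is the missing ingredient and is not implied by the hypotheses. The atom-case fallback via \eqref{eq:Poincare ineq for BV with zero boundary values} is correct, but it only covers $\mu(\{x\})>0$, not the general non-atomic case with $r/\mu(B(x,r))\to 0$.

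Finally, two smaller points. First, your ``transition scale at which the density lies in $[1/4,3/4]$'' should be justified more carefully, since $\mu(B(x,s))$ need not be continuous in $s$; the paper's Lemma \ref{lem:capacity estimate} instead locates a dyadic scale where the density lies in $[1/(2C_d),\beta]$, which is what doubling actually gives. Second, the step $\vint_{B(x,s)}u\,d\mu\to u(x)=1$ should be justified by the fact that $1$-q.e.\ point is a Lebesgue point of a $N^{1,1}$ function (see \cite{KKST2}), rather than by quasicontinuity per se.
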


Towards proving the proposition, we first collect some more facts.
According to \cite[Proposition 6.16]{BB}, if $x\in X$, $0<r<\frac 18\diam X$,
and $A\subset B(x,r)$, then
for some constant $C=C(C_d,C_P,\lambda)$,
\begin{equation}\label{eq:comparison of capacities}
\frac{\capa_1(A)}{C(1+r)}\le\rcapa_1(A,B(x,2r))\le 2\left(1+\frac 1r\right)\capa_1(A).
\end{equation}
In fact, the proof reveals that the second
inequality holds with any $r>0$.
We will need one more estimate for the variational $1$-capacity;
recall the definition of the measure theoretic interior $I_A$ from
\eqref{eq:definition of measure theoretic interior}.

\begin{lemma}\label{lem:capacity estimate}
Let $x\in X$, $0<r<\frac 18 \diam  X$, and $A\subset B(x,r)$ with $x\in I_A$.
Then there exists $s_r\in (0,r]$ such that
\[
\frac{\mu(B(x,s_r))}{C_2 s_r}\le \rcapa_1(A,B(x,2r))
\]
for a constant $C_2=C_2(C_d,C_P)$.
\end{lemma}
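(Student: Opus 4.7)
The plan is to apply Lemma~\ref{lem:solutions from capacity} to obtain a set $E\subset X$ with $A\subset E\subset B(x,2r)$ (up to null sets) and $P(E,X)\le \rcapa_1(A,B(x,2r))=:c$, then to locate a radius $s_r\in(0,r]$ at which $E$ splits $B(x,s_r)$ into two pieces each of mass comparable to $\mu(B(x,s_r))$. Applying the relative isoperimetric inequality~\eqref{eq:relative isoperimetric inequality} at $B(x,s_r)$, together with $P(E,B(x,\lambda s_r))\le P(E,X)\le c$, will then yield $\mu(B(x,s_r))\le C_2 s_r c$. (Lemma~\ref{lem:solutions from capacity} requires the outer radius strictly below $\tfrac{1}{8}\diam X$, so if $r$ is close to $\tfrac{1}{8}\diam X$ we apply it with outer radius $2r-\delta$ and let $\delta\downarrow 0$.) Since $x\in I_A$ and $A\subset E$ up to a null set, automatically $x\in I_E$, so the density $f(s):=\mu(B(x,s)\cap E)/\mu(B(x,s))$ tends to $1$ as $s\to 0^+$.

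I split into two cases according to $f(r)$. If $f(r)\ge \tfrac{1}{2}$, then $\mu(B(x,r))\le 2\mu(E)$, and combining with the Sobolev-type inequality~\eqref{eq:isop inequality with zero boundary values} applied to $E\subset B(x,2r)$ gives $\mu(B(x,r))\le 2\mu(E)\le 4C_S r P(E,X)\le 4C_S rc$, so $s_r=r$ works. If instead $f(r)<\tfrac{1}{2}$, I set
\[
s_*:=\sup\{s\in(0,r]:\mu(B(x,s)\cap E)\ge \tfrac{3}{4}\mu(B(x,s))\},\qquad s_r:=\min(2s_*,r).
\]
The hypothesis $x\in I_E$ guarantees $s_*>0$, and the case assumption forces $s_*<r$, so $s_r\in(s_*,r]$. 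Along an increasing sequence $s_i\uparrow s_*$ chosen from the supremum set, the open balls $B(x,s_i)$ and the intersections $B(x,s_i)\cap E$ increase to $B(x,s_*)$ and $B(x,s_*)\cap E$ respectively, so upward continuity of $\mu$ yields $\mu(B(x,s_*)\cap E)\ge \tfrac{3}{4}\mu(B(x,s_*))$. Combined with the doubling estimate $\mu(B(x,s_r))\le \mu(B(x,2s_*))\le C_d\mu(B(x,s_*))$, this gives $\mu(B(x,s_r)\cap E)\ge \tfrac{3}{4C_d}\mu(B(x,s_r))$, while $s_r>s_*$ together with the defining property of $s_*$ forces $\mu(B(x,s_r)\setminus E)>\tfrac{1}{4}\mu(B(x,s_r))$. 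Both densities are thus bounded below by a constant $c_0=c_0(C_d)$, and~\eqref{eq:relative isoperimetric inequality} at radius $s_r$ delivers $\mu(B(x,s_r))\le (2C_P/c_0)s_rc$.

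The main obstacle is that $f$ need not be continuous in $s$, so no direct intermediate-value argument locates a radius where $E$ cleanly splits $B(x,s)$ into comparable halves. The supremum-and-then-double device circumvents this by landing just past the transition, with doubling transferring the lower bound on the density from $B(x,s_*)$ to $B(x,s_r)$ at the cost of a factor $C_d$. The upward continuity at $s_*$ is available precisely because open balls and their intersections with $E$ form an increasing family as $s_i\uparrow s_*$. Taking $C_2$ as the maximum of the constants produced in the two cases (implicitly using $C_S=C_S(C_d,C_P,\lambda)$ in the first) completes the proof.
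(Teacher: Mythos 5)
Your proposal is correct and follows the same overall strategy as the paper: produce a competitor $E$ from Lemma~\ref{lem:solutions from capacity}, use $x\in I_A\subset I_E$ to locate a radius at which $E$ occupies a fraction of the ball bounded away from both $0$ and $1$, and finish with the relative isoperimetric inequality~\eqref{eq:relative isoperimetric inequality} together with $P(E,B(x,\lambda s_r))\le P(E,X)\le\rcapa_1(A,B(x,2r))$. Where you differ is in how that balanced radius is found. The paper does it discretely: it picks the first index $i$ with $\mu(E\cap B(x,2^{-i+1}r))\ge\tfrac12\mu(B(x,2^{-i+1}r))$, handling the largest scale by the observation $\mu(E)\le\mu(B(x,2r))\le\beta\mu(B(x,4r))$ for some $\beta=\beta(C_d)<1$ (via \cite[Lemma~3.7]{BB}), so both densities are controlled within a single framework and the constant depends only on $C_d,C_P$. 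You instead use a continuous supremum $s_*$ and then double it, a clean alternative that lands just past the last good scale, with doubling transferring the density bound; and you treat the scale $r$ separately via the zero-boundary-value isoperimetric inequality~\eqref{eq:isop inequality with zero boundary values}. This works, but note that your Case~1 route brings $C_S=C_S(C_d,C_P,\lambda)$ into $C_2$, which the paper avoids; and the logical order in Case~2 should be reversed (establish $f(s_*)\ge\tfrac34$ by upward continuity first, then deduce $s_*<r$ from $f(r)<\tfrac12$). One genuinely shaky point: the ``apply with outer radius $2r-\delta$ and let $\delta\downarrow 0$'' patch does not work when $2r\ge\tfrac18\diam X$, since one cannot then send $\delta\to 0$ while keeping $2r-\delta<\tfrac18\diam X$; a cleaner fix is to take any $R\in(r,\tfrac18\diam X)$ and invoke Lemma~\ref{lem:capacity wrt different balls} to pass from $\rcapa_1(A,B(x,R))$ back to $\rcapa_1(A,B(x,2r))$ at the cost of a controlled constant. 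That said, the paper itself invokes Lemma~\ref{lem:solutions from capacity} with $R=2r$ without flagging this, so you are no worse off than the source on this point.
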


\begin{proof}
By Lemma \ref{lem:solutions from capacity} we find a set $E\subset B(x,2r)$ such that
$A\subset E$ and
\begin{equation}\label{eq:choice of E when x is in IA}
P(E,X)\le \rcapa_1(A,B(x,2r)).
\end{equation}
By the doubling property of the measure and the fact that
$0<r<\tfrac 18 \diam X$,
there exists $\beta=\beta(C_d)\in (1/2,1)$
such that
\[
\mu(E)\le \mu(B(x,2r))\le \beta\mu(B(x,4r)),
\]
see \cite[Lemma 3.7]{BB}.
Now pick the first number $i=0,1,\ldots$ such that
\[
\mu(E\cap B(x,2^{-i+1}r))\ge \frac 12 \mu(B(x,2^{-i+1}r));
\]
such $i$ exists by the fact that $x\in I_A\subset I_E$.
If $i=0$, then
\[
\frac {1}{2C_d} \mu(B(x,4r))\le \frac {1}{2} \mu(B(x,2r))\le\mu(E) \le\beta\mu(B(x,4r)).
\]
If $i\ge 1$,
then
\[
\mu(E\cap B(x,2^{-i+2}r))<\frac 12 \mu(B(x,2^{-i+2}r)),
\]
but also
\begin{align*}
\mu(E\cap B(x,2^{-i+2}r))
\ge  \mu(E\cap B(x,2^{-i+1}r))
&\ge \frac {1}{2} \mu(B(x,2^{-i+1}r))\\
&\ge \frac {1}{2C_d} \mu(B(x,2^{-i+2}r)).
\end{align*}
Letting $s:=2^{-i+2}r$, in both cases
\[
\frac {1}{2C_d} \mu(B(x,s))\le \mu(E\cap B(x,s))\le \beta \mu(B(x,s)).
\]
By the relative isoperimetric inequality \eqref{eq:relative isoperimetric inequality},
\[
2C_P s P(E,(B(x,\lambda s)))\ge \min\left\{\beta,1-\frac{1}{2C_d}\right\}\mu(B(x,s)).
\]
Thus by \eqref{eq:choice of E when x is in IA},
\begin{align*}
\rcapa_1(A,B(x,2r))
&\ge P(E,X)\\
&\ge P(E,B(x,\lambda s))\\
&\ge (2C_P)^{-1}\min\left\{\beta,1-\frac{1}{2C_d}\right\}
\frac{\mu(B(x,s))}{s}\\
&\ge (8C_P)^{-1}\min\left\{\beta,1-\frac{1}{2C_d}\right\}
\frac{\mu(B(x,s/4))}{s/4}.
\end{align*}
Thus we can choose $s_r:=s/4$.
\end{proof}

We also need the following simple lemma.

\begin{lemma}\label{lem:real line lemma}
Suppose $f\colon (0,\infty)\to (0,\infty)$ such that
$\lim_{r\to 0^+}f(r)=0$. Pick $s_r\in (0,r]$ for every $r>0$. Then
\[
\limsup_{r\to 0}\frac{f(r)}{f(s_r)}\ge 1.
\]
\end{lemma}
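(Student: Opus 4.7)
The plan is to argue by contradiction. Suppose that $\limsup_{r\to 0^+} f(r)/f(s_r) < 1$; then there exist $c\in(0,1)$ and $r_0>0$ such that
\[
f(r)\le c\, f(s_r)\qquad\text{for every }r\in(0,r_0).
\]
Note that this already forces $s_r<r$ on $(0,r_0)$, since $s_r=r$ would give $f(r)\le c\,f(r)$ and hence $f(r)\le 0$, contradicting $f>0$.

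The natural next step is to iterate. For a fixed starting point $r\in(0,r_0)$, set $t_0:=r$ and $t_{n+1}:=s_{t_n}$. Because $s_t\le t$, the sequence $(t_n)$ is non-increasing and stays in $(0,r]\subset(0,r_0)$, so the inequality from the previous display applies at each $t_n$. Induction then yields
\[
f(t_n)\ge c^{-n}f(r)\longrightarrow\infty\qquad\text{as }n\to\infty.
\]

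The main obstacle is that this single iteration does not by itself contradict $\lim_{r\to 0^+}f(r)=0$: without any continuity hypothesis on $f$, the sequence $(t_n)$ might converge to some $t^{*}>0$, and we cannot evaluate $f$ at the limit. The fix is a diagonal extraction. I would apply the iteration starting from $r_k:=1/k$ (for $k$ so large that $1/k<r_0$), producing iterates $(t_n^{(k)})_n$. For each such $k$, the divergence $f(t_n^{(k)})\to\infty$ as $n\to\infty$ lets us choose $n_k$ with $f(t_{n_k}^{(k)})\ge k$. Setting $u_k:=t_{n_k}^{(k)}$, monotonicity of $(t_n^{(k)})_n$ gives $0<u_k\le r_k=1/k\to 0$, while $f(u_k)\ge k\to\infty$, contradicting the hypothesis $\lim_{r\to 0^+}f(r)=0$. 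This completes the argument.
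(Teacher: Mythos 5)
Your proof is correct, but it takes a genuinely different and somewhat longer route than the paper. The paper argues directly: for each small $R>0$ it picks $r\in(0,R)$ with $f(r)>\sup_{0<s<R}f(s)/(1+\eps)$; since $s_r\in(0,r]\subset(0,R)$, this forces $f(s_r)\le\sup_{0<s<R}f(s)<(1+\eps)f(r)$, and letting $R\to 0$ and then $\eps\to 0$ gives the claim. You instead argue by contradiction, iterating the map $r\mapsto s_r$ to blow up $f$ along a sequence of points. Both approaches are valid; the paper's is shorter. One remark on your argument: the diagonal extraction over starting points $r_k=1/k$ is unnecessary. The concern you raise --- that the iterates $(t_n)$ might converge to some $t^{*}>0$ so you ``cannot evaluate $f$ at the limit'' --- is a red herring, because you never need to evaluate $f$ at a limit point. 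The hypothesis $\lim_{r\to 0^{+}}f(r)=0$ already implies that $f$ is bounded near $0$: there is $\delta>0$ with $f<1$ on $(0,\delta)$. Taking a single starting point $r<\min\{r_0,\delta\}$, every iterate $t_n$ lies in $(0,r]\subset(0,\delta)$, so $f(t_n)<1$ for all $n$, which directly contradicts $f(t_n)\ge c^{-n}f(r)\to\infty$. So one iteration from a sufficiently small start already finishes, and the diagonalization, while correct, adds nothing.
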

\begin{proof}
Fix $\eps>0$.
For every sufficiently small $R>0$ we find $0<r< R$ such that $f(r)>\sup_{0<s<R}f(s)/(1+\eps)$.
Then also $f(r)>f(s_r)/(1+\eps)$, and letting $\eps\to 0$ we get the result.
\end{proof}

\begin{proof}[Proof of Proposition \ref{prop:positive capacity implies thickness}]
First assume that
\[
\limsup_{r\to 0}\frac{r}{\mu(B(x,r))}>0.
\]
By \eqref{eq:comparison of capacities} we have
\begin{align*}
\limsup_{r\to 0} r\frac{\rcapa_1(\{x\}\cap B(x,r),B(x,2r))}{\mu(B(x,r))}
\ge \limsup_{r\to 0}r\frac{\capa_1(\{x\})}{C(1+r)\mu(B(x,r))}>0,
\end{align*}
so $\{x\}$ is $1$-thick at $x$.

Then suppose
\begin{equation}\label{eq:r divided by measure of ball goes to zero}
\lim_{r\to 0}\frac{r}{\mu(B(x,r))}=0.
\end{equation}
(Note that this is possible by the Example below.)
Let $0<r<\frac 18 \diam X$. By the fact that $\rcapa_1$ is an outer capacity,
we find $0<t\le r$ such that
\[
\rcapa_1(\{x\},B(x,2r))\ge
\rcapa_1(B(x,t),B(x,2r))-1.
\]
By Lemma \ref{lem:capacity estimate} we find $s_r\in (0,r]$ such that
\[
\rcapa_1(B(x,t),B(x,2r))\ge \frac{\mu(B(x,s_r))}{C_2 s_r}.
\]
Combining these,
\[
r\frac{\rcapa_1(\{x\},B(x,2r))}{\mu(B(x,r))}
\ge \frac{r}{\mu(B(x,r))}\frac{\mu(B(x,s_r))}{C_2 s_r}-\frac{r}{\mu(B(x,r))}.
\]
Letting $f(r):=r/\mu(B(x,r))$, we get by \eqref{eq:r divided by measure of ball goes to zero}
and Lemma \ref{lem:real line lemma}
\[
\limsup_{r\to 0}r\frac{\rcapa_1(\{x\},B(x,2r))}{\mu(B(x,r))}
= \limsup_{r\to 0}\frac{f(r)}{C_2 f(s_r)}\ge \frac{1}{C_2}>0,
\]
so that $\{x\}$ is $1$-thick at $x$.
\end{proof}

\begin{example}\label{ex:singular points}
	Let $X=\R^n$ equipped with the Euclidean metric and the weighted Lebesgue measure
	$d\mu:=w\,d\mathcal L^n$, with $w=|x|^{a}$ for $a\in(-n,-n+1)$.
	It is straightforward to check that $w$ is a Muckenhoupt $A_1$-weight,
	and thus $\mu$ is doubling and supports a $(1,1)$-Poincar\'e inequality,
	see e.g. \cite[Chapter 15]{HKM} for these concepts.  
	Denoting the origin by $0$, we have
	\[
	\lim_{r\to 0}\frac{r}{\mu(B(0,r))}=0,
	\]
	demonstrating that this possibility needs to be taken into account.
\end{example}

Now we derive a converse type of result compared with
Lemma \ref{lem:solutions from capacity}, given in Lemma
\ref{lem:capacity estimated by perimeter} below.

\begin{lemma}[{\cite[Lemma 4.3]{L}}]\label{lem:local boxing inequality}
	Let $x\in X$, $r>0$, and let $E\subset X$ be a $\mu$-measurable set with
	\begin{equation}\label{eq:little E}
	\frac{\mu(E\cap B(x,2r))}{\mu(B(x,2r))}\le \frac{1}{2C_d^{\lceil\log_2 ( 128\lambda)\rceil}}.
	\end{equation}
	Then for some constant $C_3=C_3(C_d,C_P,\lambda)$, 
	\begin{equation}\label{eq:capacity of measure theoretic interior}
	\rcapa_1 (I_E\cap B(x,r),B(x,2r))\le C_3 P(E,B(x,2 r)).
	\end{equation}
\end{lemma}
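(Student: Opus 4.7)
I would prove this via the standard ``boxing'' strategy: cover $I_E\cap B(x,r)$ by balls where $E$ has half-density (so that the relative isoperimetric inequality bites), apply a $5$-covering lemma to extract a disjoint subfamily, and use these to build a test function for the variational capacity.

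First, for each $y\in I_E\cap B(x,r)$ I would define a stopping radius. Since $y\in I_E$, the ratio $\mu(E\cap B(y,\rho))/\mu(B(y,\rho))$ tends to $1$ as $\rho\to 0$. On the other hand, if $\rho\le r/(128\lambda)$ then $B(y,128\lambda\rho)\subset B(x,2r)$, and iterating doubling at most $\lceil\log_2(128\lambda)\rceil$ times, the hypothesis \eqref{eq:little E} forces $\mu(E\cap B(y,\rho))/\mu(B(y,\rho))\le 1/2$ at such scales. Hence there is a radius $\rho_y\in(0,r/(128\lambda)]$ satisfying
\[
\frac{\mu(E\cap B(y,\rho_y))}{\mu(B(y,\rho_y))}\ge \frac 12,\qquad
\frac{\mu(E\cap B(y,2\rho_y))}{\mu(B(y,2\rho_y))}\le \frac 12.
\]
By doubling, the first inequality gives $\mu(E\cap B(y,2\rho_y))\ge (2C_d)^{-1}\mu(B(y,2\rho_y))$, so together with the second inequality the set $E$ occupies a fraction of $B(y,2\rho_y)$ bounded between two absolute constants. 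The relative isoperimetric inequality \eqref{eq:relative isoperimetric inequality} applied on $B(y,2\rho_y)$ then yields
\[
\mu(B(y,\rho_y))\le C\,\rho_y\,P\bigl(E,B(y,2\lambda\rho_y)\bigr)
\]
with $C=C(C_d,C_P)$, and crucially $B(y,2\lambda\rho_y)\subset B(x,2r)$.

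Next I apply the $5$-covering lemma to $\{B(y,\rho_y):y\in I_E\cap B(x,r)\}$ to extract a pairwise disjoint subfamily $\{B(y_i,\rho_i)\}$ whose dilations $\{B(y_i,5\rho_i)\}$ cover $I_E\cap B(x,r)$. Disjointness of $\{B(y_i,\rho_i)\}$ combined with doubling gives bounded overlap (depending only on $C_d,\lambda$) for $\{B(y_i,2\lambda\rho_i)\}$, and $B(y_i,10\rho_i)\subset B(x,2r)$ since $\rho_i\le r/(128\lambda)$. I define the test function
\[
u(z):=\max_i\left(1-\frac{\dist(z,B(y_i,5\rho_i))}{5\rho_i}\right)_+,
\]
which equals $1$ on $I_E\cap B(x,r)$, vanishes outside $\bigcup_i B(y_i,10\rho_i)\subset B(x,2r)$, and admits the upper gradient $g_u\le \sum_i(5\rho_i)^{-1}\ch_{B(y_i,10\rho_i)}$. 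Therefore
\[
\rcapa_1(I_E\cap B(x,r),B(x,2r))\le \int_X g_u\,d\mu
\le \sum_i\frac{\mu(B(y_i,10\rho_i))}{5\rho_i}
\le C\sum_i\frac{\mu(B(y_i,\rho_i))}{\rho_i},
\]
where doubling absorbs the factor $10$. Inserting the isoperimetric estimate at each $\rho_i$ and using the bounded overlap of the perimeter balls collapses $\sum_i P(E,B(y_i,2\lambda\rho_i))$ to a constant multiple of $P(E,B(x,2r))$, giving the claim.

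The main obstacle is matching constants: the explicit threshold $1/(2C_d^{\lceil\log_2(128\lambda)\rceil})$ in \eqref{eq:little E} is precisely what is needed to force the stopping radius $\rho_y$ below $r/(128\lambda)$, so that both the enlarged covering balls and the Poincar\'e-dilated balls $B(y_i,\lambda\cdot)$ stay inside $B(x,2r)$; once this scale control is secured, the rest is a standard Whitney/Vitali summation that I would execute carefully but without surprises.
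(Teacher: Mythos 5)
First, note that the paper itself does not prove this lemma --- it imports it verbatim from \cite[Lemma 4.3]{L} --- so I am comparing your proposal against the standard proof strategy for such local boxing estimates rather than against a proof appearing in this paper. Your skeleton is the right one: a stopping-time radius at which $E$ has roughly half density, the relative isoperimetric inequality \eqref{eq:relative isoperimetric inequality} at that scale, a $5r$-covering, and a Lipschitz test function built from the cover. This is essentially how the result is proved in the literature.

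There is, however, a genuine gap in the summation step. You assert that disjointness of the selected balls $\{B(y_i,\rho_i)\}$ together with doubling forces bounded overlap of the dilates $\{B(y_i,2\lambda\rho_i)\}$, with overlap constant depending only on $C_d$ and $\lambda$, and you use this to collapse $\sum_i P(E,B(y_i,2\lambda\rho_i))$ to a multiple of $P(E,B(x,2r))$. That implication is false when the radii in the family are not uniformly comparable, and the stopping radii $\rho_y$ certainly need not be comparable. On $\R$ take $y_i=2^{-i}$ and $\rho_i=2^{-i-2}$; the balls $B(y_i,\rho_i)$ are pairwise disjoint and all lie in $B(0,1)$, but $B(y_i,4\rho_i)=(0,2^{-i+1})$, so the point $2^{-N}$ lies in $N$ of the dilated balls, and the overlap is unbounded near the origin. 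A fixed dilation factor strictly larger than $1$ can always be defeated this way. Without bounded overlap, the final inequality $\sum_i P(E,B(y_i,2\lambda\rho_i))\lesssim P(E,B(x,2r))$ has no justification, and that inequality is the crux of the lemma.

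The repair is standard and keeps your structure intact: apply the $5r$-covering lemma to the \emph{already-dilated} family $\{B(y,2\lambda\rho_y)\}_{y\in I_E\cap B(x,r)}$. These balls still fit in $B(x,2r)$ provided $\rho_y$ is capped by a small enough multiple of $r/\lambda$. You obtain a pairwise disjoint countable subfamily $\{B(y_i,2\lambda\rho_i)\}$ whose $5$-dilates cover $I_E\cap B(x,r)$, and you build the test function from those $5$-dilates. Then
\[
\sum_i P\bigl(E,B(y_i,2\lambda\rho_i)\bigr)\le P\bigl(E,B(x,2r)\bigr)
\]
follows directly from disjointness and the fact that $P(E,\cdot)$ is a Borel measure, so no overlap count is needed. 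The estimate from $\int_X g_u\,d\mu$ down to $\sum_i \mu(B(y_i,2\lambda\rho_i))/(2\lambda\rho_i)$ then goes through via doubling exactly as you wrote, and the isoperimetric bound at each $B(y_i,2\rho_i)$ finishes the argument.

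A smaller issue: to force $\mu(E\cap B(y,\rho))/\mu(B(y,\rho))\le 1/2$ you need the inclusion $B(x,2r)\subset B(y,C\rho)$, so that $\mu(B(x,2r))$ can be bounded \emph{from above} by a controlled multiple of $\mu(B(y,\rho))$; the inclusion $B(y,128\lambda\rho)\subset B(x,2r)$ you invoke goes the wrong way. Since $y\in B(x,r)$ gives only $B(x,2r)\subset B(y,3r)$, the ratio $3r/\rho$ must be at most $2^{\lceil\log_2(128\lambda)\rceil}$, and taking $\rho=r/(128\lambda)$ makes this ratio $384\lambda$, which is too large. Choosing the stopping ceiling $\rho_0$ in a range such as $[r/(32\lambda),\,r/(10\lambda)]$ both secures the density comparison via \eqref{eq:little E} and keeps the $5$-dilated covering balls $B(y_i,10\lambda\rho_i)$ (and the supports $B(y_i,20\lambda\rho_i)$ of the test function) inside $B(x,2r)$.
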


We can strengthen this in the following way.

\begin{lemma}\label{lem:capacity estimated by perimeter}
	Let $x\in X$, $r>0$, and let $E\subset X$ be a $\mu$-measurable set with
	\[
	\frac{\mu(E\cap B(x,2r))}{\mu(B(x,2r))}\le \frac{1}{2C_d^{\lceil\log_2 ( 128\lambda)\rceil}}.
	\]
	Then
	\[
	\rcapa_1((I_E\cup\partial^*E)\cap B(x,r),B(x,2r))\le C_3 P(E,B(x,2r)),
	\]
	where $C_3$ is the constant from Lemma \ref{lem:local boxing inequality}.
\end{lemma}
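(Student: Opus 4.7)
The plan is to deduce the inequality from Lemma~\ref{lem:local boxing inequality} by combining a simple set-theoretic inclusion with the continuity of the test functions produced in the proof of that lemma.

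First I would verify the inclusion
\[
(I_E\cup\partial^*E)\cap B(x,r)\subset \overline{I_E\cap B(x,r)}.
\]
For $y\in\partial^*E\cap B(x,r)$, the definition of the measure theoretic boundary gives $\limsup_{\rho\to 0^+}\mu(E\cap B(y,\rho))/\mu(B(y,\rho))>0$, and Lebesgue's differentiation theorem gives $\mu(E\Delta I_E)=0$. Combining these, $\mu(I_E\cap B(y,\rho_n))>0$ along some sequence $\rho_n\to 0^+$. Since $y$ lies in the open set $B(x,r)$, eventually $B(y,\rho_n)\subset B(x,r)$, so choosing $y_n\in I_E\cap B(y,\rho_n)\subset I_E\cap B(x,r)$ produces a sequence with $y_n\to y$, placing $y$ in $\overline{I_E\cap B(x,r)}$.

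Next, for each $\eps>0$ I would invoke the proof of Lemma~\ref{lem:local boxing inequality} (i.e.\ \cite[Lemma 4.3]{L}) to obtain a continuous (in fact Lipschitz) test function $u\in N^{1,1}(X)$ with $0\le u\le 1$, $u\equiv 0$ outside $B(x,2r)$, $u\equiv 1$ on an open set containing $I_E\cap B(x,r)$, and
\[
\int_X g_u\,d\mu\le C_3\,P(E,B(x,2r))+\eps;
\]
this is the standard output of the Vitali/Lipschitz-bump construction underlying that boxing inequality. Since $u$ is continuous, $\{u\ge 1\}$ is closed, so by the inclusion above
\[
(I_E\cup\partial^*E)\cap B(x,r)\subset\overline{I_E\cap B(x,r)}\subset\{u\ge 1\}.
\]
Hence $u$ is admissible for $\rcapa_1((I_E\cup\partial^*E)\cap B(x,r),B(x,2r))$, and letting $\eps\to 0$ yields the stated bound with the same constant $C_3$.

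The principal obstacle is confirming that the test function produced in the proof of \cite[Lemma 4.3]{L} is actually continuous and equal to $1$ on an \emph{open} neighborhood of $I_E\cap B(x,r)$, rather than merely the pointwise condition $u\ge 1$ on $I_E\cap B(x,r)$ used in the definition of $\rcapa_1$. The standard construction via Vitali coverings and Lipschitz partitions of unity delivers both properties automatically, so this should not present a real difficulty, but it requires an explicit check against the argument in \cite{L}.
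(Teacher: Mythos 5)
The topological inclusion you establish,
\[
(I_E\cup\partial^*E)\cap B(x,r)\subset \overline{I_E\cap B(x,r)},
\]
is correct and its proof is fine, but the way you then try to exploit it does not go through, and it is not the route the paper takes. The gap is exactly the one you flag at the end as a presumed formality: the test function produced by the boxing-inequality construction underlying \cite[Lemma 4.3]{L} is \emph{not} continuous in general. That construction covers $I_E\cap B(x,r)$ by balls whose radii necessarily shrink to $0$ as one approaches $\partial^*E$, and then takes a capped sum (or supremum) of Lipschitz bumps adapted to those balls. Such a function is only lower semicontinuous; at an accumulation point of the covering balls that is itself not covered, the function can take value $0$ while being $1$ arbitrarily nearby. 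For a lower semicontinuous $u$ bounded above by $1$, the set $\{u\ge 1\}$ is a $G_\delta$ set, not closed, so the chain $\overline{I_E\cap B(x,r)}\subset\{u\ge 1\}$ you need has no justification. In short, you would have to reopen and modify the proof of \cite[Lemma 4.3]{L} rather than cite it, and it is not clear the modification is routine.

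The paper avoids this entirely and uses Lemma \ref{lem:local boxing inequality} strictly as a black box. Fix $\eps>0$ and pick \emph{any} admissible $v\in N^{1,1}(X)$ with $v=1$ on $I_E\cap B(x,r)$, $v=0$ off $B(x,2r)$, $0\le v\le 1$, and $\int_X g_v\,d\mu\le\rcapa_1(I_E\cap B(x,r),B(x,2r))+\eps$. By \cite[Theorem 4.1, Remark 4.2]{KKST2}, $1$-q.e.\ point is a Lebesgue point of $v$. If $y\in\partial^*E\cap B(x,r)$ is such a Lebesgue point then, since $\mu(E\Delta I_E)=0$ and $y\in\partial^*E$, the set $I_E$ has strictly positive upper density at $y$; as $v=1$ on $I_E$ near $y$ and $v\le 1$, the Lebesgue-point condition $\vint{B(y,\rho)}|v-v(y)|\,d\mu\to 0$ forces $v(y)=1$. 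Hence $v\ge 1$ $1$-q.e.\ on $(I_E\cup\partial^*E)\cap B(x,r)$, which is exactly what the definition of $\rcapa_1$ requires, and letting $\eps\to 0$ gives the bound via Lemma \ref{lem:local boxing inequality}. This is the substitute you were looking for: rather than continuity of a specially built test function, one uses the Lebesgue-point property that \emph{every} $N^{1,1}$ test function has. I would suggest replacing your step via continuity with this Lebesgue-point argument.
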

\begin{proof}
	By Lemma \ref{lem:local boxing inequality},
	\eqref{eq:capacity of measure theoretic interior} holds.
	We can assume that $P(E,B(x,2r))<\infty$.
	Fix $\eps>0$. By the definition of the variational capacity, we find a function
	$v\in N^{1,1}(X)$ with $v= 1$ in $I_E\cap B(x,r)$,
	$v=0$ in $X\setminus B(x,2r)$, and
	\begin{equation}\label{eq:estimate for upper gradient of v}
	\int_X g_v\,d\mu\le \rcapa_1(I_E\cap B(x,r),B(x,2r))+\eps.
	\end{equation}
	Since $1$-q.e. point is a Lebesgue point of $v$, see \cite[Theorem 4.1, Remark 4.2]{KKST2},
	we have
	$v(x)= 1$ for $1$-q.e. $x\in \partial^*E\cap B(x,r)$.
	Thus by \eqref{eq:estimate for upper gradient of v} and
	\eqref{eq:capacity of measure theoretic interior},
	\begin{align*}
	\rcapa_1((I_E\cup \partial^*E)\cap B(x,r),B(x,2r))
	&\le \int_X g_v\,d\mu\\
	&\le \rcapa_1(I_E\cap B(x,r),B(x,2r))+\eps\\
	&\le C_3 P(E,B(x,2r))+\eps.
	\end{align*}
	Letting $\eps\to 0$, we get the result.
	\end{proof}

Finally, we record the following consequence of \cite[Theorem 5.2]{L}.

\begin{theorem}\label{thm:fine semicontinuity of BV functions}
Let $u\in\BV(X)$. Then $u^{\wedge}$ is $1$-finely lower semicontinuous at $1$-q.e.
$x\in X$.
\end{theorem}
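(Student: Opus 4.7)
The plan is to deduce the statement from \cite[Theorem 5.2]{L}, which for $u\in\BV(X)$ supplies, at $1$-quasievery $x\in X$, exactly the thinness of sublevel sets that encodes $1$-fine lower semicontinuity of $u^{\wedge}$ at $x$.

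First I would set up the translation between fine lower semicontinuity and thinness. By Definition \ref{def:1 fine topology} together with Lemma \ref{lem:open modification of thin sets}, whenever $A$ is $1$-thin at $x$ with $x\notin A$, there is a metric-open $W\supset A$ that is $1$-thin at $x$, so $V:=X\setminus W$ is a metric-open (hence $1$-finely open at $x$) neighborhood of $x$ disjoint from $A$. Consequently $u^{\wedge}$ is $1$-finely lower semicontinuous at $x$ precisely when, for every real $t<u^{\wedge}(x)$, the sublevel set
\[
A_t:=\{y\in X:\,u^{\wedge}(y)\le t\}
\]
is $1$-thin at $x$: the ``only if'' direction uses monotonicity of $\rcapa_1$, and the ``if'' direction is the construction just described applied to $A_t$, which gives a $1$-fine neighborhood of $x$ on which $u^{\wedge}>t$.

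Next I would invoke \cite[Theorem 5.2]{L} to obtain a set $N\subset X$ with $\capa_1(N)=0$ such that for every $x\in X\setminus N$ and every rational $\eps>0$ the set $\{u^{\wedge}\le u^{\wedge}(x)-\eps\}$ is $1$-thin at $x$. (Should the published form of that theorem be phrased in terms of a $1$-fine lower limit of $u$ rather than of $u^{\wedge}$, the identification of $u^{\wedge}$ with the essential lower limit of $u$ on a set of full $1$-capacity, together with \eqref{eq:null sets of Hausdorff measure and capacity}, closes the gap; in any case the countable union of exceptional sets over rational $\eps$ remains of $\capa_1$-measure zero.) Fixing $x\in X\setminus N$ and an arbitrary real $t<u^{\wedge}(x)$, I pick a rational $\eps>0$ with $t<u^{\wedge}(x)-\eps$ (in the case $u^{\wedge}(x)=\infty$, any rational $\eps>0$ works once one reads $u^{\wedge}(x)-\eps$ as a sufficiently large real number larger than $t$). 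The inclusion $A_t\subset\{u^{\wedge}\le u^{\wedge}(x)-\eps\}$ and monotonicity of $\rcapa_1$ then give that $A_t$ is $1$-thin at $x$, so by the first step $u^{\wedge}$ is $1$-finely lower semicontinuous at $x$.

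The only substantive obstacle I anticipate is matching the precise formulation of \cite[Theorem 5.2]{L} to the form used here, which should amount to the standard bridge between $u^{\wedge}$ and the $1$-fine lower limit of $u$ outside a capacity-zero set. The rest of the argument --- the equivalence of fine lower semicontinuity with sublevel-set thinness, handling the $\infty$ case, and passing from rational to real $\eps$ --- is routine and requires no analytic input beyond Lemma \ref{lem:open modification of thin sets} and the cited theorem.
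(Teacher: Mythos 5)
You take the same route as the paper, which simply records the statement as a consequence of \cite[Theorem 5.2]{L} with no additional argument, so the essential step of your proposal matches. The translation layer you insert between sublevel-set thinness and fine lower semicontinuity, however, contains a genuine error: if $W$ is metric-open then $V:=X\setminus W$ is metric-\emph{closed}, not metric-open, so $V$ does not furnish a metric (hence $1$-fine) neighborhood of $x$ on the grounds you give. What you actually need is the non-trivial direction of the thinness--fine-neighborhood correspondence, namely that $1$-thinness of $X\setminus V$ at a point $x\in V$ implies $V$ contains a $1$-finely open set containing $x$; establishing this requires showing that the $1$-fine interior $\{y\in V:\,X\setminus V\textrm{ is }1\textrm{-thin at }y\}$ is $1$-finely open, a Choquet/Cartan-type fact which is not among the tools developed in Section \ref{sec:the fine topology} and which, for $p=1$, is delicate and closely related to the weak Cartan property proved only later in Section \ref{sec:weak cartan property}. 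The paper sidesteps all of this by stating the theorem as a direct consequence of \cite[Theorem 5.2]{L}; the remark following the theorem is a restatement of what that citation yields rather than a derivation via the thinness equivalence. Your contingency plan for matching the precise form of the cited result is sensible, but the particular bridge you propose is not justified with the tools available at this point in the paper.
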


In other words, for $1$-q.e. $x\in X$, every set $\{u^{\wedge}>t\}$
(with $t\in\R$) that contains $x$ is
a $1$-fine neighborhood of $x$.
For Newton-Sobolev functions we have the stronger result that if $u\in N^{1,p}(X)$
for $1< p<\infty$, then $u$ is $p$-finely continuous at $p$-q.e. $x\in X$,
see \cite{JB}, \cite{Kor}, or \cite[Theorem 11.40]{BB}; we do not give the definition of the $p$-fine
topology for $p>1$ here but it can also be found in the above references.

\section{The weak Cartan property}\label{sec:weak cartan property}

In this section we prove the weak Cartan property, as well as a strong version at points
of nonzero  $1$-capacity.
Our proof will rely on breaking the set $A$ into two subsets
that do not intersect certain annuli around $x$. Such a separation argument
is inspired by the proof of the analogous property in the case $p>1$,
see \cite{BBL-WC}, which in turn is based on \cite{HKM-FT} and \cite{LM}.

\begin{lemma}\label{lem:smallness in annuli}
Let $B=B(x,R)$ be a ball with $0<R<\frac{1}{12} \diam X$, and
suppose that $A\subset B$ with $A\cap (\tfrac{9}{20} B\setminus \tfrac{1}{4} B)=\emptyset$.
Let $E\subset X$ be a solution of the $\mathcal K_{A,0}(\tfrac 32 B)$-obstacle problem
(as guaranteed by Lemma \ref{lem:solutions from capacity}).
Then for all
$y\in \tfrac{2}{5} B \setminus  \tfrac{5}{16} B$,
\[
\ch_E^{\vee}(y)\le C_4 R \frac{\rcapa_1(A,2B)}{\mu(B)}
\]
for some constant $C_4=C_4(C_d,C_P,\lambda)$.
\end{lemma}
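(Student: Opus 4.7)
The plan is to apply the weak Harnack inequality for solutions of obstacle problems (Theorem \ref{thm:weak Harnack}(b)) on a small ball around $y$ on which the obstacle $\ch_A$ vanishes, and then to bound the resulting average of $\ch_E$ by combining the Sobolev-type inequality \eqref{eq:isop inequality with zero boundary values} with the perimeter estimate $P(E,X)\le\rcapa_1(A,\tfrac{3}{2}B)$ that is built into the choice of $E$ (Lemma \ref{lem:solutions from capacity}).

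I would first select the scale $R':=R/20$. Since $y\in\tfrac{2}{5}B\setminus\tfrac{5}{16}B$ means $\tfrac{5}{16}R\le d(y,x)\le \tfrac{2}{5}R$, and the hypothesis forces $A\subset(B\setminus\tfrac{9}{20}B)\cup\tfrac{1}{4}B$, the distance from $y$ to $A$ is at least
\[
\min\Big\{\tfrac{5}{16}R-\tfrac{1}{4}R,\ \tfrac{9}{20}R-\tfrac{2}{5}R\Big\}=\tfrac{R}{20}=R'.
\]
Thus $B(y,R')\cap A=\emptyset$, and also $B(y,R')\subset B(x,\tfrac{9}{20}R)\Subset \tfrac{3}{2}B$. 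In particular $\psi=\ch_A$ satisfies $\psi\le 0$ a.e.\ on $B(y,R')$, so Theorem \ref{thm:weak Harnack}(b) applied to $u=\ch_E$ with $k=0$ and radii $R'/2<R'<\tfrac{1}{4}\diam X$ yields
\[
\esssup_{B(y,R'/2)}\ch_E\le 2^{Q} C_1\vint_{B(y,R')}\ch_E\,d\mu=\frac{2^{Q} C_1\,\mu(E\cap B(y,R'))}{\mu(B(y,R'))},
\]
and the definition of the upper approximate limit gives $\ch_E^{\vee}(y)\le \esssup_{B(y,R'/2)}\ch_E$.

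To control the right-hand side, I would invoke three estimates already at hand. First, Lemma \ref{lem:solutions from capacity} together with Lemma \ref{lem:capacity wrt different balls} (applied with $r=R$, $s=3/2$, $t=2$, using $2R<\tfrac{1}{4}\diam X$) gives $P(E,X)\le\rcapa_1(A,\tfrac{3}{2}B)\le 5C_S\rcapa_1(A,2B)$. Second, since $\ch_E=0$ a.e.\ outside $\tfrac{3}{2}B$, the zero-boundary Sobolev inequality \eqref{eq:isop inequality with zero boundary values} applied to $E$ and the ball $B(x,\tfrac{3}{2}R)$ gives $\mu(E)\le \tfrac{3}{2}C_S R\,P(E,X)$. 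Third, the inclusion $B\subset B(y,\tfrac{7}{5}R)=B(y,28R')$ combined with the doubling property yields $\mu(B(y,R'))\ge C_d^{-\lceil\log_2 28\rceil}\mu(B)$. Chaining these bounds with the weak Harnack estimate produces $\ch_E^{\vee}(y)\le C_4 R\,\rcapa_1(A,2B)/\mu(B)$ with $C_4=C_4(C_d,C_P,\lambda)$.

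The main obstacle is essentially bookkeeping: verifying that the numerical thresholds $\tfrac{9}{20},\tfrac{1}{4},\tfrac{2}{5},\tfrac{5}{16}$ in the hypothesis admit a single scale $R'$ for which $B(y,R')$ simultaneously avoids $A$ and is compactly contained in $\tfrac{3}{2}B$, and then tracking constant dependencies through the chain of auxiliary lemmas. The conceptual point---that a solution of the obstacle problem which is $L^1$-small on a ball around $y$ is pointwise small at $y$---is entirely encoded in the weak Harnack inequality already proved.
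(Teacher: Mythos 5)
Your proof is correct and follows essentially the same route as the paper: choose the scale $R/20$ so that $B(y,R/20)$ avoids $A$ and sits compactly in $\tfrac{3}{2}B$, apply the weak Harnack inequality (Theorem \ref{thm:weak Harnack}(b)) with $k=0$, and then bound $\mu(E)$ via Lemma \ref{lem:solutions from capacity}, Lemma \ref{lem:capacity wrt different balls}, and the isoperimetric inequality \eqref{eq:isop inequality with zero boundary values}, with doubling to compare $\mu(B(y,R/20))$ with $\mu(B)$. The only differences from the paper's proof are cosmetic choices of constants (e.g.\ $C_d^5$ vs.\ $C_d^6$ in the doubling step).
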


\begin{proof}
By Lemma \ref{lem:solutions from capacity} and Lemma \ref{lem:capacity wrt different balls}
we know that
\[
P(E,X)\le \rcapa_1(A,\tfrac 32 B) \le 5C_S \rcapa_1(A,2B),
\]
and thus by the isoperimetric inequality \eqref{eq:isop inequality with zero boundary values},
\begin{equation}\label{eq:E1 has small measure}
\mu(E)\le 2C_S R P(E,X)\le 10C_S^2 R \rcapa_1(A,2B).
\end{equation}
For any $z\in \tfrac{2}{5} B\setminus \tfrac{5}{16} B$, letting $r:=R/20$ we have
$B(z,r)\subset \tfrac{9}{20} B\setminus \tfrac{1}{4}B$, and so by
Theorem \ref{thm:weak Harnack}(b),
\begin{align*}
\sup_{B(z,r/2)} \ch_E^{\vee }
&\le \esssup_{B(z,r/2)}\ch_E\\
&\le C_1\left(\frac{r}{r-r/2}\right)^Q\vint{B(z,r)}(\ch_E)_+\,d\mu\\
&= \frac{2^Q C_1}{\mu(B(z,r))}\int_{B(z,r)} (\ch_E)_+\,d\mu\\
&\le \frac{2^Q C_1 C_d^6}{\mu(B)}\mu(E)\\
&\le 5\times 2^{Q+1} C_1 C_d^6  C_S^2 R \frac{\rcapa_1(A,2B)}{\mu(B)}
\end{align*}
by \eqref{eq:E1 has small measure}. Thus we can choose $C_4=5\times 2^{Q+1} C_1 C_d^6 C_S^2$.
\end{proof}

Now we prove the weak Cartan property, Theorem \ref{thm:weak Cartan property}.
In fact, we give the following formulation containing somewhat
more information, which will be useful in future work when considering
\emph{p-strict subsets} and a \emph{Choquet property} in the case $p=1$,
cf. \cite[Lemma 3.3]{BBL-SS}, \cite[Lemma 2.6]{Lat}, and \cite{BBL-CCK}.

\begin{theorem}\label{thm:weak Cartan property in text}
	Let $A\subset X$ and let $x\in X\setminus A$ be such that $A$
	is $1$-thin at $x$.
	Then there exist $R>0$ and $E_0,E_1\subset X$ such that $\ch_{E_0},\ch_{E_1}\in\BV(X)$,
	$\ch_{E_0}$ and $\ch_{E_1}$ are $1$-superminimizers in $B(x,R)$,
	$\max\{\ch_{E_0}^{\wedge},\ch_{E_1}^{\wedge}\}=1$ in $A\cap B(x,R)$,
	$\ch_{E_0}^{\vee}(x)=0=\ch_{E_1}^{\vee}(x)$,
	$\{\max\{\ch_{E_0}^{\vee},\ch_{E_1}^{\vee}\}>0\}$ is $1$-thin at $x$,
	and
	\begin{equation}\label{eq:weak Cartan property energy thinness}
	\lim_{r\to 0}r\frac{P(E_0,B(x,r))}{\mu(B(x,r))}=0,\qquad
	\lim_{r\to 0}r\frac{P(E_1,B(x,r))}{\mu(B(x,r))}=0.
	\end{equation}
\end{theorem}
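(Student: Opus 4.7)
Plan. Following the strategy of the analogous $p>1$ result in \cite{BBL-WC}, I decompose an open set containing $A$ via a dyadic-style annular partition around $x$ into two parity-indexed pieces, each of which avoids the ``buffer annulus'' of Lemma~\ref{lem:smallness in annuli} at every other scale, and then assemble $E_0, E_1$ out of obstacle-problem solutions built scale by scale.

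More precisely, I would start by applying Lemma~\ref{lem:open modification of thin sets} to pass from $A$ to an open set $W\supset A$ that is still $1$-thin at $x$ (and hence, by a capacitary lower bound, automatically satisfies $x\notin W$). Fix a large scale factor $M\ge 20$ and $R_0<\tfrac{1}{24}\diam X$, and write $r_i:=M^{-i}R_0$, $B_i:=B(x,r_i)$, and $\mathrm{Ann}_i:=B_i\setminus\overline{B_{i+1}}$. Because $M$ is large, the buffer annulus $\tfrac{9}{20}B_i\setminus\tfrac{1}{4}B_i$ lies inside $\mathrm{Ann}_i$ and is disjoint from every $\mathrm{Ann}_j$ with $j\ne i$. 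Set
\[
W^0:=W\cap\bigcup_{i\text{ even}}\mathrm{Ann}_i,\qquad W^1:=W\cap\bigcup_{i\text{ odd}}\mathrm{Ann}_i,
\]
so that $W^0$ avoids the buffer annulus at every odd $i$, and $W^1$ at every even $i$. For each odd $i$, Lemma~\ref{lem:solutions from capacity} produces a set $F^0_i\subset X$ solving the $\mathcal K_{W^0\cap B_i,0}(\tfrac{3}{2}B_i)$-obstacle problem with $P(F^0_i,X)\le\rcapa_1(W^0\cap B_i,\tfrac{3}{2}B_i)$; symmetrically, $F^1_i$ for each even $i$. I then set
\[
E_0:=\bigcup_{i\text{ odd}}F^0_i,\qquad E_1:=\bigcup_{i\text{ even}}F^1_i,
\]
and take $R:=r_2$ (after further shrinking if required).

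Verification then breaks into four pieces. Coverage: each $\ch_{F^j_i}$ is a $1$-superminimizer in $\tfrac{3}{2}B_i$ since solutions of obstacle problems are, and openness of $W$ places every $y\in W\cap B(x,R)$ inside some $\mathrm{Ann}_{i(y)}$ of definite parity, forcing $y\in I_{W^j\cap B_{i(y)}}\subset I_{F^j_{i(y)}}$ and hence $\ch_{F^j_{i(y)}}^\wedge(y)=1$ (with a mild perturbation of the $r_i$ to avoid placing points of $A$ on spheres). Perimeter: combining subadditivity of $P$, the capacity bound on each $F^j_i$, Lemma~\ref{lem:capacity wrt different balls}, and the $1$-thinness of $W$ in the discrete form of Lemma~\ref{lem:discrete thinness condition} delivers \eqref{eq:weak Cartan property energy thinness}. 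Density: combining the perimeter bound with the relative isoperimetric inequality~\eqref{eq:relative isoperimetric inequality} and the fact that $x\notin W$ keeps $\mu(E_j\cap B(x,r))\le\tfrac12\mu(B(x,r))$ on a cofinal sequence of $r$'s, yielding $\ch_{E_j}^\vee(x)=0$. Thinness of the level set: Lemma~\ref{lem:smallness in annuli} gives $\ch_{F^j_i}^\vee\equiv 0$ on each buffer annulus, so $\{\max(\ch_{E_0}^\vee,\ch_{E_1}^\vee)>0\}$ sits in a union of shrinking shells inheriting $1$-thinness at $x$ from that of $W$.

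The main obstacle, in my view, is confirming that each $\ch_{E_j}$ is itself a $1$-superminimizer in $B(x,R)$: unlike the case $p>1$, where sums of superharmonic functions are superharmonic, unions of $1$-set-solutions are not automatically $1$-superminimizers. A related subtlety is ensuring $P(E_j,X)<\infty$ so that $\ch_{E_j}\in\BV(X)$, since $r_i\rcapa_1(W\cap B_i,2B_i)/\mu(B_i)\to 0$ does not by itself imply summability of $\rcapa_1(W\cap B_i,2B_i)$; one may therefore have to thin the union down to a sparse sub-sequence of scales along which the thinness moduli decay geometrically. For the superminimality itself, the plan is to exploit Lemma~\ref{lem:smallness in annuli}: the $F^j_i$'s in one family are geometrically separated by buffer shells on which $\ch_{F^j_i}^\vee\equiv 0$, so for any nonnegative $\varphi\in\BV_c(B(x,R))$ the defining inequality of $1$-superminimizer should reduce, via a slicing and coarea argument (using \eqref{eq:coarea}), to a sum of scale-wise inequalities already guaranteed for each individual obstacle solution $\ch_{F^j_i}$.
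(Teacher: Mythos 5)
Your plan diverges from the paper's proof at the decisive structural step, and the divergence is exactly where your own ``main obstacle'' paragraph identifies trouble. You propose building $E_0$ and $E_1$ as unions $\bigcup_i F^j_i$ of obstacle solutions taken at infinitely many scales, and then worry (rightly) that such unions need not be $1$-superminimizers and need not have finite perimeter. The paper avoids this entirely: it defines $D_0$, $D_1$ as the two parity unions of annuli $H_j := B_j\setminus\tfrac{9}{10}\overline{B_{j+1}}$, sets $W_i := W\cap D_i$, and takes $E_0$ and $E_1$ to each be a solution of a \emph{single} obstacle problem, namely $\mathcal K_{W_0,0}(\tfrac32 B_0)$ and $\mathcal K_{W_1,0}(\tfrac32 B_1)$. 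Because each is one obstacle solution (not a union of them), $\ch_{E_0},\ch_{E_1}\in\BV(X)$ and the superminimizer property are automatic from Lemma~\ref{lem:solutions from capacity} and the proposition preceding it. There is nothing to re-prove.

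The annular separation is then used not to glue pieces together, but in the opposite direction: Lemma~\ref{lem:smallness in annuli} gives $\ch_{E_i}^\vee\equiv 0$ on the intermediate shell $F_{i+1}$, so $\mu(E_i\cap F_{i+1})=0$; this disconnection splits the perimeter additively and, via a one-line comparison with the competitor $E_{i+2}\cup(E_i\setminus\tfrac45 B_{i+1})$, shows that $E_0\cap\tfrac54 B_i$ is itself a solution of $\mathcal K_{W_i,0}(\tfrac32 B_i)$ for every even $i$ (and analogously for $E_1$ at odd $i$). That localization is what yields the scale-by-scale perimeter and capacity bounds $P(E_0\cap\tfrac54 B_i,X)\le 5C_S\rcapa_1(W\cap B_i,2B_i)$, from which \eqref{eq:weak Cartan property energy thinness}, the vanishing of $\ch_{E_j}^\vee(x)$, and the $1$-thinness of $\{\max\{\ch_{E_0}^\vee,\ch_{E_1}^\vee\}>0\}$ all follow via Lemmas~\ref{lem:capacity estimated by perimeter} and~\ref{lem:discrete thinness condition}. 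Without this ``restrictions are solutions'' step you have no route to the local perimeter estimates, and your proposed patches (coarea slicing for superminimality, sparse subsequences for summability) would either not close the argument or would destroy the coverage $\max\{\ch_{E_0}^\wedge,\ch_{E_1}^\wedge\}=1$ on $A\cap B(x,R)$. So the parity decomposition of the obstacle is the right instinct, but it must be applied to the obstacle of one global problem, not to a stack of scale-wise problems whose solutions you then union.
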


\begin{proof}
By Lemma \ref{lem:open modification of thin sets}
we find an open set $W\supset A$ that is $1$-thin at $x$.
Fix $0<R<\frac{1}{12} \diam X$ such that
\[
\sup_{0<s\le R}s\frac{\rcapa_1(W\cap B(x,s),B(x,2s))}{\mu(B(x,s))}<\frac{1}{2C_4.}
\]
Let $B_i:=B(x,2^{-i}R)$ and let $H_i:=B_i\setminus \frac{9}{10}\overline{B_{i+1}}$,
$i=0,1,\ldots$.
Then let
\[
D_i:=\bigcup_{j=i,\, i+2,\, i+4,\ldots} H_j,\qquad i=0,1,\ldots,
\]	
so that $D_0\cup D_1=B(x,R)$.
Let $W_i:=W\cap D_i$, $i=0,1,\ldots$, and then by Lemma \ref{lem:solutions from capacity}
we can let $E_i\subset X$
be a solution of the $\mathcal K_{W_i,0}(\frac 32 B_{i})$-obstacle problem;
clearly $\ch_{E_i}\in\BV(X)$ for all $i$.
Let $F_i:=\frac 45 B_i\setminus \frac 54 B_{i+1}\subset H_i$, $i\in\N$.
Fix $i=0,1,\ldots$. From Lemma \ref{lem:smallness in annuli}
we get for all $y\in F_{i+1}$
\[
\ch_{E_i}^{\vee}(y)\le C_4 2^{-i}R \frac{\rcapa_1(W\cap B_i,2B_{i})}{\mu(B_i)}\le
\frac 12.
\]
Since $\ch_{E_i}^{\vee}$ can only take the values $0,1$, we conclude that
$\ch_{E_i}^{\vee}=0$ in $F_{i+1}$, and thus by the Lebesgue differentiation theorem,
\begin{equation}\label{eq:disconnectedness of Ei}
\mu(E_i\cap F_{i+1})=0.
\end{equation}
Note that
$E_{i+2}\cup (E_i\setminus \tfrac 45 B_{i+1})$
is admissible for the $\mathcal K_{W_i,0}(\frac 32 B_{i})$-obstacle problem.
Now if we had
\[
P(E_{i+2},X)<P(E_i\cap \tfrac 54 B_{i+2},X),
\]
then by the fact that the sets $E_{i+2}\subset \frac 32 B_{i+2}$ and
$E_i\setminus \frac 45 B_{i+1}$
are separated by a strictly positive distance,
\begin{align*}
P(E_{i+2}\cup (E_i\setminus \tfrac 45 B_{i+1}),X)
&=P(E_{i+2},X)+P(E_i\setminus \tfrac 45 B_{i+1},X)\\
& <P(E_i\cap \tfrac 54 B_{i+2},X)+P(E_i\setminus \tfrac 45 B_{i+1},X)\\
& =P(E_i,X)
\end{align*}
by \eqref{eq:disconnectedness of Ei},
which would contradict the fact that $E_i$ is a solution of the
$\mathcal K_{W_{i},0}(\tfrac 32 B_{i})$-obstacle problem.
Thus $P(E_{i+2},X)\ge P(E_i\cap \tfrac 54 B_{i+2},X)$,
and since $E_i\cap \frac 54 B_{i+2}$
is admissible for the $\mathcal K_{W_{i+2},0}(\frac 32 B_{i+2})$-obstacle problem,
we conclude that it is a solution.
Inductively, we find that $E_0\cap \frac 54 B_{i}$
is a solution of the
$\mathcal K_{W_{i},0}(\tfrac 32 B_{i})$-obstacle problem, for any $i=2,4,6,\ldots$.
Analogously, $E_1\cap \frac 54 B_{i}$
is a solution of the
$\mathcal K_{W_{i},0}(\tfrac 32 B_{i})$-obstacle problem,
for any $i=3,5,7,\ldots$.

By Lemma \ref{lem:solutions from capacity} and the fact that
$E_0\cap \frac 54 B_{i}$
is a solution of the $\mathcal K_{W_{i},0}(\tfrac 32 B_{i})$-obstacle problem,
and by Lemma \ref{lem:capacity wrt different balls}, we have
\begin{equation}\label{eq:initial perimeter estimate E0}
P(E_0\cap \tfrac 54 B_{i},X)
\le \rcapa_1(W_i,\tfrac 32 B_i)
\le 5C_S \rcapa_1(W\cap B_i,2B_i)
\end{equation}
for every $i=2,4,6,\ldots$, and similarly
$P(E_1\cap \tfrac 54 B_{i},X)\le 5C_S \rcapa_1(W\cap B_i,2B_i)$
for every $i=3,5,7,\ldots$.

Let $0<\delta<(20 C_S^2 C_d^{\lceil\log_2(128\lambda)\rceil})^{-1}$.
Since $W$ is $1$-thin at $x$, for some even $m\in\N$ and every $i=m,m+2,\ldots$,
we have
\[
2^{-i}R\frac{\rcapa_1(W\cap B_i,2B_i)}{\mu(B_i)}
\le\delta.
\]
Fix such $m$. Together with \eqref{eq:initial perimeter estimate E0},
this gives
\begin{equation}\label{eq:perimeter estimate E0}
2^{-i}R\frac{P(E_0\cap \tfrac 54 B_{i},X)}{\mu(B_i)}
\le 5C_S \delta
\end{equation}
for every $i=m,m+2,\ldots$.
	By the isoperimetric inequality
	\eqref{eq:isop inequality with zero boundary values},
	we now have
	\[
	\mu(E_0\cap \tfrac 54 B_{i})
	\le C_S 2^{-i+1}R P(E_0\cap \tfrac 54 B_{i},X)
	\le  10 C_S^2 \delta \mu(B_i).
	\]
	Thus
	\begin{equation}\label{eq:estimate for measure of E0}
	\frac{\mu(E_0\cap \tfrac 54 B_{i})}{\mu(2B_i)}
	\le \frac{\mu(E_0\cap \tfrac 54 B_{i})}{\mu(B_i)}
	\le 10 C_S^2 \delta
	\le \frac{1}{2 C_d^{\lceil\log_2(128\lambda)\rceil}}.
	\end{equation}
By the fact that
\[
\{\ch_{E_0}^{\vee}=1\}\cap B_i=(I_{E_0}\cup \partial^*{E_0})\cap B_i=
\big(I_{E_0\cap \frac 54 B_{i}}\cup \partial^*(E_0\cap \tfrac 54 B_{i})\big)\cap B_i
\]
and Lemma \ref{lem:capacity estimated by perimeter}, we get
\begin{align*}
&2^{-i}R\frac{\rcapa_1(\{\ch_{E_0}^{\vee}=1\}\cap B_i,2B_i)}{\mu(B_{i})}\\
&\qquad\qquad  =2^{-i}R\frac{\rcapa_1\left(\big(I_{E_0\cap \frac 54 B_{i}}\cup 
\partial^*(E_0\cap \tfrac 54 B_{i})\big)\cap B_i,2 B_{i}\right)}{\mu(B_{i})}
\\
&\qquad\qquad  \le 2^{-i}R C_3 \frac{P(E_0\cap \tfrac 54 B_{i},X)}{\mu(B_{i})}\\
&\qquad\qquad  \le 5C_3 C_S \delta
\end{align*}
by \eqref{eq:perimeter estimate E0}.
Since this holds for every $i=m,m+2,\ldots$, and since $\delta$ can be made
arbitrarily small, by Lemma \ref{lem:discrete thinness condition} we obtain
\[
\lim_{r\to 0}r\frac{\rcapa_1(\{\ch_{E_0}^{\vee}=1\}\cap B(x,r),B(x,2r))}{\mu(B(x,r))}
=0.
\]
Analogously, we prove the corresponding result for $E_1$.
Since $\ch_{E_0}^{\vee}>0$ exactly when $\ch_{E_0}^{\vee}=1$,
we have established that
$\{\max\{\ch_{E_0}^{\vee},\ch_{E_1}^{\vee}\}>0\}$ is $1$-thin at $x$.
Since $\delta$ can be chosen arbitrarily small also in \eqref{eq:estimate for measure of E0},
we get $\ch_{E_0}^{\vee}(x)=0$, and similarly $\ch_{E_1}^{\vee}(x)=0$.
Moreover, since $A\cap D_0\subset W_0\subset E_0$ and $W_0$ is open,
$\ch_{E_0}^{\wedge}=1$ in $A\cap D_0$.
Analogously, $\ch_{E_1}^{\wedge}=1$ in $A\cap D_1$,
so that $\max\{\ch_{E_0}^{\wedge},\ch_{E_1}^{\wedge}\}= 1$ in $A\cap B(x,R)$.
Finally, \eqref{eq:weak Cartan property energy thinness} follows easily from
\eqref{eq:perimeter estimate E0} (and the corresponding property for $E_1$).
\end{proof}

It can be noted that in the case $p>1$, the proof of the weak Cartan property relies
on the comparison principle as well as weak
Harnack inequalities for both superminimizers and subminimizers.
We only have the last of these three tools available, but we are able to replace the others
(and in fact get a simpler argument)
with the very powerful fact that the superminimizer functions can be taken to be
characteristic functions of sets of finite perimeter;
recall especially \eqref{eq:disconnectedness of Ei}.

\begin{proof}[Proof of Theorem \ref{thm:weak Cartan property}]
	Let $R>0$ and $E_0,E_1\subset X$ as given by Theorem \ref{thm:weak Cartan property in text},
	and choose $u_1:=\ch_{E_0}$ and $u_2:=\ch_{E_1}$.
\end{proof}

The analog of the following result is again known in the case $p>1$, see \cite[Lemma 6.2]{BB}.
Our proof will be similar, but we need to rely on the quasisemicontinuity of $\BV$ functions
instead of the quasicontinuity that is available in the case $p>1$.

\begin{proposition}\label{prop:cap goes to zero at thinness points}
	Let $A\subset X$ be $1$-thin at $x\in X$ and let $R_0>0$.
	Then
	\[
	\lim_{r\to 0}\rcapa_1(A\cap B(x,r),B(x,R_0))=0.
	\]
\end{proposition}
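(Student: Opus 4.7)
The plan is to derive the result from the weak Cartan property (Theorem~\ref{thm:weak Cartan property in text}) combined with the capacity--perimeter estimate of Lemma~\ref{lem:capacity estimated by perimeter}, using quasisemicontinuity of $\BV$ functions (Proposition~\ref{prop:quasisemicontinuity of BV}) as the $p=1$ substitute for the quasicontinuity argument available in the case $p>1$.

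First I would dispose of the case $x\in A$. Monotonicity of $1$-thinness forces $\{x\}$ to be $1$-thin at $x$, so by the contrapositive of Proposition~\ref{prop:positive capacity implies thickness} one has $\capa_1(\{x\})=0$; then~\eqref{eq:comparison of capacities} together with a cutoff yields $\rcapa_1(\{x\},B(x,R_0))=0$, and subadditivity of $\rcapa_1$ reduces the problem to $A\setminus\{x\}$. Assuming henceforth $x\notin A$, I apply Theorem~\ref{thm:weak Cartan property in text} to obtain $R>0$ and $E_0,E_1\subset X$ with $\chi_{E_i}\in\BV(X)$, $\max\{\chi_{E_0}^{\wedge},\chi_{E_1}^{\wedge}\}=1$ on $A\cap B(x,R)$, and $\chi_{E_0}^{\vee}(x)=\chi_{E_1}^{\vee}(x)=0$. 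Setting $E:=E_0\cup E_1$, this gives the pointwise inclusion $A\cap B(x,R)\subset I_E$ together with $x\in O_E$.

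Given $\eps>0$, I then use Proposition~\ref{prop:quasisemicontinuity of BV} to pick an open set $G\subset X$ with $\capa_1(G)<\eps$ such that $\chi_E^{\wedge}|_{X\setminus G}$ is lower semicontinuous. For all $r>0$ so small that $2r\le R_0$ and that $\mu(E\cap B(x,2r))/\mu(B(x,2r))$ falls below the density threshold in Lemma~\ref{lem:capacity estimated by perimeter} (possible because $x\in O_E$), subadditivity and monotonicity of $\rcapa_1$, Lemma~\ref{lem:capacity estimated by perimeter} applied to $I_E\cap B(x,r)\subset(I_E\cup\partial^*E)\cap B(x,r)$, and the bound $\rcapa_1(G,B(x,R_0))\le C\eps$ coming from~\eqref{eq:comparison of capacities} combine to give
\[
\rcapa_1(A\cap B(x,r),B(x,R_0))\le C_3\,P(E,B(x,2r))+C\eps.
\]
Since $\chi_E\in\BV(X)$, $P(E,\cdot)$ is a finite Radon measure on $X$, so continuity from above yields $P(E,B(x,2r))\to P(E,\{x\})$ as $r\to 0^+$; the concentration of the perimeter measure on $\partial^*E$ together with $x\in O_E$ then forces $P(E,\{x\})=0$, and letting first $r\to 0$ and then $\eps\to 0$ finishes the argument.

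The main obstacle is precisely the identification $P(E,\{x\})=0$: the perimeter decay~\eqref{eq:weak Cartan property energy thinness} from the weak Cartan proof controls only the scale-invariant ratio $rP(E,B(x,r))/\mu(B(x,r))$, which does not by itself force $P(E,B(x,r))\to 0$ in settings where $\mu(B(x,r))/r\to\infty$, as in Example~\ref{ex:singular points}. One therefore has to invoke genuinely the structure theory for sets of finite perimeter in doubling Poincar\'e spaces, namely that $P(E,\cdot)$ is concentrated on the measure-theoretic boundary $\partial^*E$.
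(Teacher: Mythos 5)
Your proposal is correct in substance but follows a genuinely different route from the paper, so it is worth spelling out the comparison.

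The paper's proof splits on whether $\capa_1(\{x\})$ is zero or positive. In the first case the conclusion is immediate from the outer regularity of $\rcapa_1$ together with $\rcapa_1(\{x\},B(x,R_0))=0$. In the second case, the paper applies the weak Cartan property to get $u_1,u_2$, uses quasi upper semicontinuity of $u_1^{\vee}$ and $u_2^{\vee}$ to find an open exceptional set $G$ of capacity $<\eps$, and then exploits the inequality $\capa_1(G)<\eps<\capa_1(\{x\})$ to guarantee $x\notin G$; the upper semicontinuity of $u_i^{\vee}|_{X\setminus G}$ at $x$ then traps $\{u_i^{\vee}\ge 1\}\cap B(x,r)$ inside $G$ for small $r$, giving a capacity bound directly. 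Your route instead takes $E:=E_0\cup E_1$ from Theorem~\ref{thm:weak Cartan property in text} (still $\BV$ since $\chi_{E_0\cup E_1}=\max\{\chi_{E_0},\chi_{E_1}\}$), observes the pointwise inclusions $A\cap B(x,R)\subset I_E$ and $x\in O_E$, applies the capacity--perimeter estimate of Lemma~\ref{lem:capacity estimated by perimeter} to get $\rcapa_1(A\cap B(x,r),B(x,R_0))\le C_3 P(E,B(x,2r))$ for small $r$, and then sends $r\to 0$ using continuity from above of the finite Radon measure $P(E,\cdot)$ together with $P(E,\{x\})=0$ for $x\notin\partial^*E$. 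This avoids the case split on $\capa_1(\{x\})$ entirely (you instead split on $x\in A$ versus $x\notin A$, which is necessary since the weak Cartan property requires $x\notin A$, and that part is handled correctly).

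Two remarks. First, the quasisemicontinuity step in your writeup is a red herring: once you have the exact pointwise inclusion $A\cap B(x,R)\subset I_E$, the argument goes through without any exceptional set $G$ or $\eps$ at all, and the displayed inequality should not contain a $C\eps$ term; the $G,\eps$ portion should simply be deleted. Second, you are right to flag the load-bearing ingredient: the conclusion $P(E,\{x\})=0$ genuinely requires that $P(E,\cdot)$ charges no set disjoint from $\partial^*E$. This is the structure theorem for sets of finite perimeter in PI spaces (Ambrosio; Ambrosio--Miranda--Pallara), which is standard in this setting but is not stated in the paper's preliminaries and is a deeper input than anything the paper's own proof uses. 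The scale-invariant decay \eqref{eq:weak Cartan property energy thinness} alone does not give $P(E,B(x,r))\to 0$ when $\mu(B(x,r))/r\to\infty$, exactly as you note. So your proof is valid, but at the cost of importing a nontrivial structure theorem; the paper's proof, while requiring the case split on $\capa_1(\{x\})$, stays within the toolbox already set up (quasisemicontinuity and the weak Cartan property).
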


Note that this does not follow directly from the definition of $1$-thinness,
since it is possible that $r/\mu(B(x,r))\to 0$ as $r\to 0$, recall Example \ref{ex:singular points}.

\begin{proof}
	First assume that $\capa_1(\{x\})=0$. Then $\rcapa_1(\{x\},B(x,R_0))=0$
	by \eqref{eq:comparison of capacities}, and so by the fact that $\rcapa_1$ is an outer
	capacity,
	\begin{align*}
	\limsup_{r\to 0}\rcapa_1(A\cap B(x,r),B(x,R_0))
	&\le \limsup_{r\to 0}\rcapa_1(B(x,r),B(x,R_0))\\
	&=\rcapa_1(\{x\},B(x,R_0))
	=0.
	\end{align*}
	Then assume that $\capa_1(\{x\})>0$.
	By Proposition \ref{prop:positive capacity implies thickness} we know
	that $\{x\}$ is $1$-thick at $x$, and so $x\notin A$.
	By Theorem
	\ref{thm:weak Cartan property} we find $R>0$ and
	functions $u_1,u_2\in\BV(X)$
	such that $\max\{u_1^{\wedge},u_2^{\wedge}\}=1$ in $A\cap B(x,R)$ and
	$u_1^{\vee}(x)=u_2^{\vee}(x)=0$.
	Then also $\max\{u_1^{\vee},u_2^{\vee}\}\ge 1$ in $A\cap B(x,R)$.
	Fix $0<\eps<\capa_1(\{x\})$.
	By Proposition \ref{prop:quasisemicontinuity of BV}
	there exists an open set $G\subset X$
	with $\capa_1(G)<\eps$
	such that $u_1^{\vee}|_{X\setminus G}$ is upper semicontinuous.
	By comparing capacities, we conclude that $x\notin G$.
	Thus by the upper semicontinuity, we necessarily have
	$\{u_1^{\vee}\ge 1\}\cap B(x,r)\subset G$
	for some $0<r<R_0/2$.
	This implies that $\capa_1(\{u_1^{\vee}\ge 1\}\cap B(x,r))<\eps$.
	Analogously, and by making $r$ smaller if necessary,
	$\capa_1(\{u_2^{\vee}\ge 1\}\cap B(x,r))<\eps$, so in total,
	\[
	\capa_1(A\cap B(x,r))\le \capa_1((\{u_1^{\vee}\ge 1\}\cup \{u_2^{\vee}\ge 1\})\cap B(x,r))
	<2\eps.
	\]
	Then by \eqref{eq:comparison of capacities},
	\[
	\rcapa_1(A\cap B(x,r),B(x,R_0))\le 2\left(1+\frac{2}{R_0}\right)\capa_1(A\cap B(x,r))
	< 4\eps\left(1+\frac{2}{R_0}\right).
	\]
	Since $\eps$ can be chosen arbitrarily small, we have the result.
\end{proof}

Just as in the case $p>1$, see \cite[Proposition 6.3]{BBL-WC}, at points of
nonzero capacity we obtain a strong Cartan property,
where we need only one superminimizer.

\begin{proposition}\label{prop:strong Cartan property}
Suppose that $x\in X$ with $\capa_1(\{x\})>0$, that $A\subset X$
is $1$-thin at $x$,
and that $0<R<\frac 18 \diam X$. Then
there exists a $1$-superminimizer $u$ in $B(x,R)$ such that
\[
\lim_{A\ni y\to x}u^{\wedge}(y)= \infty> u^{\vee}(x).
\]
\end{proposition}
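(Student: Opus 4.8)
The plan is to deduce the strong Cartan property from the weak one (Theorem~\ref{thm:weak Cartan property}) by exploiting the hypothesis $\capa_1(\{x\})>0$, which forces $x$ to be an \emph{isolated} point in the relevant capacitary sense and lets us combine the two superminimizers into one. First I would invoke Theorem~\ref{thm:weak Cartan property} (or rather the more detailed Theorem~\ref{thm:weak Cartan property in text}) to obtain $R'>0$, which we may assume is at most $R$, and sets $E_0,E_1\subset X$ with $\ch_{E_0},\ch_{E_1}\in\BV(X)$, both $1$-superminimizers in $B(x,R')$, with $\max\{\ch_{E_0}^{\wedge},\ch_{E_1}^{\wedge}\}=1$ on $A\cap B(x,R')$, with $\ch_{E_0}^{\vee}(x)=0=\ch_{E_1}^{\vee}(x)$, and — crucially — with the set $\{\max\{\ch_{E_0}^{\vee},\ch_{E_1}^{\vee}\}>0\}$ being $1$-thin at $x$.

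The key idea is an iteration: since that superlevel set $S:=\{\max\{\ch_{E_0}^{\vee},\ch_{E_1}^{\vee}\}>0\}$ is itself $1$-thin at $x$ and does not contain $x$ (because $\ch_{E_i}^{\vee}(x)=0$), we may reapply the weak Cartan construction to $S$, or better, keep reapplying Theorem~\ref{thm:weak Cartan property in text} to a decreasing sequence of $1$-thin sets to produce, for each $j\in\N$, functions whose upper representatives vanish at $x$ but equal $1$ on the previous stage's exceptional set intersected with a small ball. Summing a rapidly convergent series $u:=\sum_{j} c_j \ch_{E^{(j)}}$ (over a suitable alternating/telescoping choice as in the proof of Theorem~\ref{thm:weak Cartan property in text}, with $c_j\to\infty$ slowly enough that the $\BV$-norms summed with weights $c_j$ still converge, using the perimeter decay \eqref{eq:weak Cartan property energy thinness}) should give a single function $u\in\BV(X)$ that is a $1$-superminimizer in a ball $B(x,R)$ (a locally finite sum of superminimizers is a superminimizer, since near any compact subset only finitely many terms are nonconstant once one localizes, or one argues via the obstacle-problem characterization), with $u^{\wedge}(y)\to\infty$ as $A\ni y\to x$ because on $A$ near $x$ arbitrarily many of the terms contribute $1$ to $u^{\wedge}$, while $u^{\vee}(x)=0$ because every term has $\ch_{E^{(j)}}^{\vee}(x)=0$ and one controls the tail. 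Alternatively — and this may be cleaner — one uses Proposition~\ref{prop:cap goes to zero at thinness points}: since $\rcapa_1(A\cap B(x,r),B(x,R_0))\to0$, one can for each $j$ pick $r_j\downarrow 0$ and, via Lemma~\ref{lem:solutions from capacity}, a set $F_j\subset X$ solving an obstacle problem over $A\cap B(x,r_j)$ with $P(F_j,X)\le \rcapa_1(A\cap B(x,r_j),B(x,R))\to 0$; then Lemma~\ref{lem:smallness in annuli}-type estimates (or directly the isoperimetric inequality \eqref{eq:isop inequality with zero boundary values} combined with Theorem~\ref{thm:weak Harnack}(b)) give $\ch_{F_j}^{\vee}(x)$ as small as we like, so after passing to a subsequence $\sum_j \ch_{F_j}^{\vee}(x)<\infty$ while $\ch_{F_j}^{\wedge}=1$ on $A\cap B(x,r_j)$, and $u:=\sum_j \ch_{F_j}$ does the job.

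Concretely, the steps in order: (1) reduce to the case $\capa_1(\{x\})>0$ so that Proposition~\ref{prop:positive capacity implies thickness} gives $x\notin A$ and $\{x\}$ $1$-thick, legitimizing the weak Cartan input; (2) apply Proposition~\ref{prop:cap goes to zero at thinness points} with $R_0=R$ to get $\rcapa_1(A\cap B(x,r),B(x,R))\to 0$; (3) for a sequence $r_j\downarrow0$ with $\rcapa_1(A\cap B(x,r_j),B(x,R))$ decaying fast, use Lemma~\ref{lem:solutions from capacity} to get solutions $E^{(j)}$ of the $\mathcal K_{A\cap B(x,r_j),0}(B(x,R))$-obstacle problem with $P(E^{(j)},X)\le \rcapa_1(A\cap B(x,r_j),B(x,R))$, hence $\mu(E^{(j)})$ tiny by \eqref{eq:isop inequality with zero boundary values}, hence $\ch_{E^{(j)}}^{\vee}(x)$ can be forced to be $0$ or summably small by Theorem~\ref{thm:weak Harnack}(b) applied at $x$ (after thinning the sequence); (4) set $u:=\sum_j \ch_{E^{(j)}}$, check $u\in\BV(X)$ by summing perimeters, check $u$ is a $1$-superminimizer in $B(x,R)$ (sum of superminimizers, via the definition tested against nonnegative $\varphi\in\BV_c$ and the subadditivity \eqref{eq:BV functions form vector space}, or by a limiting/obstacle argument), and check $u^{\vee}(x)\le\sum_j\ch_{E^{(j)}}^{\vee}(x)<\infty$ whereas for $y\in A\cap B(x,r_j)$ one has $\ch_{E^{(k)}}^{\wedge}(y)=1$ for all $k\le j$ (since $W^{(k)}\supset A\cap B(x,r_k)\supset A\cap B(x,r_j)$ is where the obstacle sits and, after passing to an open enlargement as in Lemma~\ref{lem:open modification of thin sets} / the proof of Theorem~\ref{thm:weak Cartan property in text}, $\ch_{E^{(k)}}^{\wedge}=1$ there), so $u^{\wedge}(y)\ge j\to\infty$ as $A\ni y\to x$.

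The main obstacle I expect is (4): verifying simultaneously that the infinite sum $u=\sum_j \ch_{E^{(j)}}$ lies in $\BV(X)$ \emph{and} is a genuine $1$-superminimizer in $B(x,R)$, and that $u^{\wedge}(y)\to\infty$ along $A$ — the first requires quantitative control of $\sum_j P(E^{(j)},X)$ from Proposition~\ref{prop:cap goes to zero at thinness points}, the second requires care because $\ch_{E^{(j)}}^{\wedge}$ equals $1$ only on the \emph{open} enlargement of $A\cap B(x,r_j)$ (one must arrange these enlargements, as in Lemma~\ref{lem:open modification of thin sets}, to still contain $A$ near $x$), and the superminimizer property of an infinite sum needs either a local-finiteness observation near compact subsets of $B(x,R)$ or a direct $\BV_c(\Om)$-test-function argument using that each partial sum is a superminimizer and the tail has small total variation. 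Establishing the superminimality of the sum cleanly — rather than merely of each finite truncation — is the delicate point, and I would handle it by testing the definition \eqref{eq:definition of 1minimizer} with a fixed $\varphi\in\BV_c(B(x,R))$, writing $u=u_N+\text{tail}_N$ where $u_N=\sum_{j\le N}\ch_{E^{(j)}}$ is a superminimizer and $\|D\,\text{tail}_N\|(\supp\varphi)\to 0$, and passing to the limit.
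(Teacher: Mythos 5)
Your overall strategy shares its starting point with the paper --- both use Proposition~\ref{prop:cap goes to zero at thinness points} to obtain $\rcapa_1(A\cap B(x,r_i),B(x,R))<2^{-i}$ along a sequence $r_i\downarrow 0$, then enlarge to open sets $U_i\supset A\cap B(x,r_i)$ and work with the associated capacity test functions. But the two proofs part ways at the crucial construction step, and your route has a genuine gap.

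You propose to take set solutions $E^{(j)}$ of the individual obstacle problems and define $u:=\sum_j\ch_{E^{(j)}}$. The entire argument then rests on the claim that $u$ (and even each finite partial sum $u_N=\sum_{j\le N}\ch_{E^{(j)}}$, which you assert ``is a superminimizer'') is a $1$-superminimizer. This is not established, and it is false in general: the class of $1$-superminimizers is not closed under addition, since the total variation functional is nonlinear (only subadditive) and the $1$-Laplacian is a nonlinear operator. Your fallback --- writing $u=u_N+\mathrm{tail}_N$ and passing to the limit --- only helps if you already knew each $u_N$ to be a superminimizer, which is precisely what is missing. Note that the same objection applies to both variants you sketch, the iterated-weak-Cartan one and the ``cleaner alternative''; both culminate in summing characteristic-function superminimizers.

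The paper sidesteps this entirely by summing at the level of the \emph{obstacle}, not the solution: take the capacity test functions $\psi_i\in N^{1,1}(X)$ with $\psi_i=1$ on $U_i$, $\psi_i=0$ off $B(x,R)$, $\int g_{\psi_i}\,d\mu<2^{-i}$, and set $\psi:=\sum_i\psi_i\in N^{1,1}(X)\subset\BV(X)$. Then one solves a \emph{single} obstacle problem $\mathcal K_{\psi,0}(B(x,R))$; the solution $u$ is automatically a $1$-superminimizer. On $U_1\cap\dots\cap U_k\supset A\cap B(x,r_k)$ one has $\psi\ge k$, hence $u\ge k$ there, and since these sets are open, $u^{\wedge}\ge k$ there, which gives $\lim_{A\ni y\to x}u^{\wedge}(y)=\infty$. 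The paper also obtains $u^{\vee}(x)<\infty$ in a simpler way than you propose: it does not try to make $\ch_{E^{(j)}}^{\vee}(x)$ small at each stage, but instead invokes the general fact that $u^{\vee}<\infty$ $\mathcal H$-a.e.\ for $u\in\BV(X)$ (\cite[Lemma 3.2]{KKST3}), combined with \eqref{eq:null sets of Hausdorff measure and capacity} and $\capa_1(\{x\})>0$. If you want to salvage your construction, the fix is the same: replace $\sum_j\ch_{E^{(j)}}$ by a solution of the obstacle problem with obstacle $\sum_j\ch_{U_j}$ (or better, the Newton--Sobolev obstacle $\sum_j\psi_j$), so that superminimality comes for free from the obstacle-problem framework rather than from an unjustified closure property.
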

\begin{proof}
By Proposition \ref{prop:cap goes to zero at thinness points}
we find a decreasing sequence of numbers $0<r_i<R$ such that
\[
\rcapa_1(A\cap B(x,r_i),B(x,R))<2^{-i},\quad i\in\N.
\]
Since $\rcapa_1$ is an outer capacity, there exist open sets $U_i\supset A\cap B(x,r_i)$
such that
\[
\rcapa_1(U_i,B(x,R))<2^{-i}.
\]
By the definition of the variational $1$-capacity, we find nonnegative
functions $\psi_i\in N^{1,1}(X)$ with $\psi_i= 1$ in $U_i$, $\psi_i=0$ in
$X\setminus B(x,R)$, and
\[
\int_X g_{\psi_i}\,d\mu<2^{-i},
\]
where as usual $g_{\psi_i}$ is the minimal $1$-weak upper gradient of $\psi_i$.
By the Sobolev inequality \eqref{eq:sobolev inequality}
and H\"older's inequality, we get
$\Vert \psi_i\Vert_{L^1(X)}<2^{-i}C_SR$, for each $i\in\N$.
By using the fact that $N^{1,1}(X)/\sim$ is a Banach space with the equivalence relation
$u\sim v$ if $\Vert u-v\Vert_{N^{1,1}(X)}=0$, see \cite[Theorem 1.71]{BB}, we conclude
\[
\psi:=\sum_{i=1}^{\infty} \psi_i\in N^{1,1}(X)\subset \BV(X)
\]
with $\psi=0$ in $X\setminus B(x,R)$.
Since $\psi \in \mathcal K_{\psi,0}(B(x,R))$,
by Proposition \ref{prop:existence of solutions} there exists a solution
$u$ of the $\mathcal K_{\psi,0}(B(x,R))$-obstacle problem.
Then $u$ is a $1$-superminimizer in $B(x,R)$ and
$u^{\wedge}\ge k$ in the open set
$U_1\cap \ldots \cap U_k\supset A\cap B(x,r_k)$, for every $k\in\N$.
Thus
\[
\lim_{A\ni y\to x}u^{\wedge}(y)=\infty.
\]
However, by \cite[Lemma 3.2]{KKST3} we know that $u^{\vee}(z)<\infty$ for $\mathcal H$-a.e. $z\in X$, and thus $u^{\vee}(z)<\infty$ for $1$-q.e. $z\in X$
by \eqref{eq:null sets of Hausdorff measure and capacity}.
Since $\capa_1(\{x\})>0$, necessarily $u^{\vee}(x)<\infty$.
\end{proof}

In the case $p>1$,
the $p$-fine topology is known to be the coarsest topology
that makes all $p$-superharmonic functions on open subsets of $X$ continuous, see
\cite[Theorem 1.1]{BBL-WC}.
Equivalently, it is the coarsest topology that makes such functions upper semicontinuous,
since they are lower semicontinuous already with respect to the metric topology.
In the following we consider what the analog of this could be in the case $p=1$.

\begin{definition}
We define the \emph{$1$-superminimizer topology} to be the coarsest topology
that makes the representative $u^{\vee}$ upper semicontinuous in $\Om$
for every $1$-superminimizer $u$ in $\Om$, for every open set
$\Om\subset X$.
\end{definition}

Note that if $X$ is bounded and thus compact, the only $1$-superminimizers in $X$ are constants
(for nonconstant $u\in\BV(X)$ we have $\Vert D\max\{u,k\}\Vert(X)<\Vert Du\Vert(X)$
for some $k\in\R$).
This is why we want to talk about $1$-super\-minimizers in open sets $\Om$, and as a result,
the metric topology is contained in the $1$-superminimizer topology by definition.

\begin{remark}
It would not make sense to replace $u^{\vee}$ by $u^{\wedge}$
in the definition of the $1$-superminimizer topology.
To see this, consider $X=\R$ (unweighted) and the Heaviside function
$u(x)=1$ for $x\ge 0$ and $u(x)=0$ for $x<0$. Moreover, let $v:=1-u$.
Now both $u$ and $v$ are clearly $1$-minimizers. On the other hand,
\[
\{u^{\wedge}<1\}\cap \{v^{\wedge}<1\}=\{0\}.
\]
Hence if the sets $\{u^{\wedge}<t\}$, for $t\in\R$ and $1$-superminimizers $u\in\BV(X)$,
are open in some topology, this topology contains \emph{all} subsets of $\R$.
\end{remark}

\begin{theorem}\label{thm:superminimizer top contains fine top}
The $1$-superminimizer topology contains the $1$-fine topology.
\end{theorem}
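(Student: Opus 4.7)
The plan is to show that every $1$-finely open set is open in the $1$-superminimizer topology, which is the set-theoretic statement of containment of topologies. The essential tool is the weak Cartan property (Theorem \ref{thm:weak Cartan property}), which is precisely designed to provide the required separating superminimizers.

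Let $U$ be $1$-finely open and fix $x \in U$. Setting $A := X \setminus U$, the defining property of $U$ gives that $A$ is $1$-thin at $x$, and clearly $x \notin A$. Applying Theorem \ref{thm:weak Cartan property} to the pair $(A,x)$ yields $R > 0$ and functions $u_1, u_2 \in \BV(X)$ that are $1$-superminimizers in $B(x,R)$ with $\max\{u_1^{\wedge}, u_2^{\wedge}\} = 1$ on $A \cap B(x,R)$ and $u_1^{\vee}(x) = u_2^{\vee}(x) = 0$.

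The candidate neighborhood of $x$ in the $1$-superminimizer topology is
\[
V := B(x,R) \cap \{u_1^{\vee} < 1/2\} \cap \{u_2^{\vee} < 1/2\}.
\]
By definition, the $1$-superminimizer topology makes $u_i^{\vee}$ upper semicontinuous in $B(x,R)$ for $i=1,2$, so each set $\{u_i^{\vee} < 1/2\} \cap B(x,R)$ is open in that topology, and $B(x,R)$ itself is open since the metric topology is contained in the $1$-superminimizer topology. Hence $V$ is open in the $1$-superminimizer topology, and $x \in V$ because $u_i^{\vee}(x) = 0$.

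It remains to show $V \subset U$. If $y \in A \cap B(x,R)$, then $\max\{u_1^{\wedge}(y), u_2^{\wedge}(y)\} = 1$, and since $u_i^{\vee} \geq u_i^{\wedge}$ pointwise, $\max\{u_1^{\vee}(y), u_2^{\vee}(y)\} \geq 1$. Thus $y \notin V$, which means $V \cap A = \emptyset$, i.e.\ $V \subset B(x,R) \setminus A \subset U$. Since $x \in U$ was arbitrary, $U$ is open in the $1$-superminimizer topology, completing the proof. There is no real obstacle here: all of the work has been absorbed into the weak Cartan property, and the deduction is the same one-line separation argument that appears in the case $p>1$.
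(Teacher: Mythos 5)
Your proof is correct and follows essentially the same argument as the paper: apply the weak Cartan property to $A = X\setminus U$ at $x$, then observe that the sublevel set $B(x,R)\cap\{u_1^{\vee}<c\}\cap\{u_2^{\vee}<c\}$ (the paper takes $c=1$, you take $c=1/2$; both work since $u_i^{\vee}\ge u_i^{\wedge}=1$ forces $u_i^{\vee}\ge 1$ on $A\cap B(x,R)$) is a $1$-superminimizer-open neighborhood of $x$ contained in $U$.
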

\begin{proof}
Let $U\subset X$ be a $1$-finely open set, and let $x\in U$.
The set $X\setminus U$ is $1$-thin at $x$. By Theorem \ref{thm:weak Cartan property},
there exist
$R>0$ and $1$-superminimizers $u_1,u_2$ in $B(x,R)$ such that
$\max\{u_1^{\vee},u_2^{\vee}\}\ge \max\{u_1^{\wedge},u_2^{\wedge}\}=1$ in $B(x,R)\setminus U$
and $u_1^{\vee}(x)=u_2^{\vee}(x)=0$.
Thus $x\in B(x,R)\cap \{u_1^{\vee}<1\}\cap \{u_2^{\vee}<1\}$,
which is a set belonging to the $1$-superminimizer topology,
and contained in $U$.
\end{proof}

Now it might seem
reasonable to postulate that the converse would hold as well, i.e. that the
$1$-fine topology would make $u^{\vee}$ upper semicontinuous
for all $1$-superminimizers $u$ in open sets.
However, this is not the case.

\begin{example}\label{ex:counterexample}
Let $X=\R^2$ with the usual $2$-dimensional Lebesgue measure $\mathcal L^2$,
let $0<\eps<1/5$, and let
\[
A:=\bigcup_{j=0}^{\infty} A_j
\]
with $A_j:=[10^{-j}-10^{-j}\eps,10^{-j}]\times [0,10^{-2j}\eps]$.
Denote the origin by  $0$.
It is straightforward to check that for any $0<R<1$,
\begin{equation}\label{eq:a thick set}
\frac{R\eps}{10}\le \rcapa_1(A\cap B(0,R),B(0,2R))\le 3R\eps,
\end{equation}
which is comparable to $\mathcal L^2(B(0,R))/R$.
Let $E\subset \R^2$ be a solution of the $\mathcal K_{A,0}(B(0,2))$-obstacle problem.
For any $y\in \R^2$ with $\tfrac{5}{16}\le |y|< \tfrac 25$,
by Lemma \ref{lem:smallness in annuli} and \eqref{eq:a thick set} we find
\[
\ch_E^{\vee}(y)\le C_4 \frac{\rcapa_1(A\cap B(0,1),B(0,2))}{\mathcal L^2(B(0,1))}
\le C_4\eps\le 1/2
\]
by choosing $\eps\le1/2C_4$.
Thus $\ch_E^{\vee}(y)=0$ for $\tfrac{5}{16}\le |y|<\tfrac 25$,
and so
\[
P(E,\R^2)=P(E\cap B(0,\tfrac{5}{16}),\R^2)+P(E\setminus B(0,\tfrac 25),\R^2).
\]
Thus we see that the minimization of the perimeter of $E$ (i.e. solving the obstacle problem)
takes place independently in the sets $B(0,\tfrac{5}{16})$ and
$E\setminus B(0,\tfrac 25)$.
Now it is straightforward to show that we must have $E\setminus B(0,\tfrac 25)=A_0$.
Inductively, we find $E=A$.
Clearly $\ch_A^{\vee}(0)=0$, but on the other hand, $A$ is $1$-thick at the origin, by
\eqref{eq:a thick set}. Thus $\ch_E^{\vee}$ is not $1$-finely upper semicontinuous at the origin.
\end{example}

Nevertheless, it is perhaps interesting to note that in Theorem
\ref{thm:weak Cartan property in text}, $\ch_{E_0}^{\vee}$ and $\ch_{E_1}^{\vee}$
\emph{are} $1$-finely upper semicontinuous at $x$, since
$\ch_{E_0}^{\vee}(x)=0=\ch_{E_1}^{\vee}(x)$
and the sets $\{\ch_{E_0}^{\vee}>0\}$ and $\{\ch_{E_1}^{\vee}>0\}$ are $1$-thin at $x$.
We expect this fact to be a useful substitute for fine upper semicontinuity
in future research.

In Table \ref{tab:comparison} we compare the properties of Newton-Sobolev and $p$-super\-harmonic
functions (for $1<p<\infty$) with the analogous properties of $\BV$
functions and $1$-superminimizers.
For the results in the left column, see the comment after
Proposition \ref{prop:quasisemicontinuity of BV},
the comment after Theorem \ref{thm:fine semicontinuity of BV functions},
\cite[Theorem 5.1]{KiMa} or \cite[Theorem 8.22]{BB}, and \cite[Theorem 1.1]{BBL-WC}.
For the results in the right column, see Proposition \ref{prop:quasisemicontinuity of BV},
Theorem \ref{thm:fine semicontinuity of BV functions},
Theorem \ref{thm:superminimizers are lsc}, and
Theorem \ref{thm:superminimizer top contains fine top}.

\paragraph{Acknowledgments.}
The research was
funded by a grant from the Finnish Cultural Foundation.
The author wishes to thank Nageswari Shan\-muga\-lingam
for helping to derive the lower semicontinuity property of $1$-super\-minimizers.

\begin{table}
\caption{A comparison chart.}
\begin{tabular}{ p{6cm} | p{6cm} }

	\emph{Properties of Newton-Sobolev and $p$-superharmonic functions, for $1<p<\infty$:} 
	& \emph{Properties of BV functions and 1-superminimizers:} \\ \hline
	$\bullet$ Every $u\in N^{1,p}(X)$ is quasicontinuous.
	&$\bullet$ For every $u\in\BV(X)$, $u^{\wedge}$ is quasi lower semicontinuous. \\
	$\bullet$ Every $u\in N^{1,p}(X)$ is $p$-finely continuous $p$-q.e.
	&$\bullet$ For every $u\in \BV(X)$, $u^{\wedge}$ is $1$-finely lower semicontinuous $1$-q.e. \\
	$\bullet$ Every $p$-superminimizer has a lower semicontinuous representative
	(a $p$-superharmonic function).
	&$\bullet$ For every $1$-superminimizer $u$, $u^{\wedge}$ is lower semicontinuous. \\
	$\bullet$ Any topology that makes $p$-superharmonic functions (upper semi-)con\-tinuous
	 in open sets contains the $p$-fine topology.
	&$\bullet$ Any topology that makes $u^{\vee}$ upper semicontinuous for
	every $1$-superminimizer $u$ in every open set
	contains the $1$-fine topology.\\
	$\bullet$ The $p$-fine topology makes $p$-superharmonic
	functions in open sets continuous.
	&  \multicolumn{1}{c}{ \parbox[t]{0cm}{\phantom\\ ?}}
\end{tabular}
\label{tab:comparison}
\end{table}

\noindent Address:\\

\noindent Department of Mathematics\\
Link\"oping University\\
SE-581 83 Link\"oping, Sweden\\
E-mail: {\tt panu.lahti@aalto.fi}

\end{document}